\makeatletter\@addtoreset{equation}{section}\makeatother
\makeatletter\@addtoreset{figure}{section}\makeatother
\makeatletter\@addtoreset{table}{section}\makeatother
\newtheorem{theorem}{Theorem}[section]
\newtheorem{lemma}[theorem]{Lemma}
\newtheorem{claim}[theorem]{Claim}
\newtheorem{prop}[theorem]{Proposition}
\newcommand{\R}{{\mathbb R}}
\newcommand{\Z}{{\mathbb Z}}
\newcommand{\N}{{\mathbb N}}
\newcommand{\T}{{\mathbb T}}
\newcommand{\D}{{\mathbb D}}
\newcommand{\phy}{\varphi}
\newcommand{\trsp}{\raisebox{.6ex}{${\scriptstyle t}$}}
\newcommand{\op}[1]{\!\!\mathop{\rm ~#1}\nolimits}
\newcommand{\DD}{\!\mathop{\rm d\!}\nolimits}
\newcommand{\scriptop}[1]{\!\!\mathop{\mbox{\rm \scriptsize ~#1}}\nolimits}
\newcommand{\deriv}[2]{\frac{\partial #1}{\partial #2}}
\newcommand{\ham}[1]{\mathcal{X}_{#1}}
\newcommand{\demon}{Proof}
\newcommand{\ssi}{\Longleftrightarrow}
\newcommand{\fleche}{\rightarrow}
\newcommand{\cqfd}{\hfill $\square$\par\vspace{1ex}}
\newcommand{\ouf}{\vspace{3mm}}
\newcommand{\RM}{\mathbb{R}}
\newcommand{\ZM}{\mathbb{Z}}
\newcommand{\NM}{\mathbb{N}}
\renewcommand{\geq}{\geqslant}
\newcommand{\demons}[1][$\!\!$]{\noindent\textbf{\demon\ }\textsl{#1}\textbf{.}~}
\newenvironment{demo}[1][$\!\!$]
{\demons[#1]\ }
{\cqfd}
\newenvironment{remark}{\par\medskip\noindent{\bf
Remark~\thetheorem~~}}{\unskip\nobreak\hfill\hbox{ $\oslash$}\par\bigskip}
\newenvironment{definition}{\refstepcounter{theorem}\par\medskip\noindent{\bf
Definition~\thetheorem~~}}{\unskip\nobreak\hfill\hbox{ $\oslash$}\par\bigskip}
\renewcommand{\emptyset}{\varnothing}
\title{Constructing integrable systems of semitoric type
} \author{Alvaro Pelayo\thanks{Partially supported by an NSF
    Postdoctoral Fellowship.} \,\, and San V\~u Ng\d oc} \date{}
\begin{document}
\maketitle

\begin{abstract}
  Let $(M,\, \omega)$ be a connected, symplectic $4$\--manifold.  A
  \emph{semitoric integrable system on $(M,\, \omega)$} essentially
  consists of a pair of independent, real\--valued, smooth functions
  $J$ and $H$ on the manifold $M$, for which $J$ generates a
  Hamiltonian circle action under which $H$ is invariant.
  In this paper we give a general method to construct, starting from a
  collection of five ingredients, a symplectic $4$\--manifold 
  equipped a semitoric integrable system. Then we show that
  every semitoric integrable
  system on a symplectic $4$\--manifold is obtained in
  this fashion.
  In conjunction with the uniqueness theorem proved recently by
  the authors (Invent. Math. 2009), this gives a classification of semitoric
  integrable systems on $4$\--manifolds, in terms of five invariants.
  Some of the invariants are geometric, others are analytic and others
  are combinatorial/group\--theoretic. 
\end{abstract}

\section{Introduction}

The present paper is motivated by some remarkable results proven in
the 80s by Atiyah, Guillemin\--Sternberg and Delzant, in the context
of Hamiltonian torus actions. Indeed, Atiyah \cite[Th.~1]{atiyah} and
Guillemin\--Sternberg \cite{gs} proved that if an $n$\--dimensional
torus acts on a compact, connected symplectic manifold $(M,\, \omega)$
in a Hamiltonian fashion, the image $\mu(M)$ under the momentum map $
\mu:=(\mu_1,\ldots,\, \mu_n) \colon M \to \R^n $ is a convex polytope.
Delzant \cite{delzant} showed that if the dimension $n$ of the torus
is half the dimension of $M$, this polytope, which in this case is
called a {\em Delzant polytope} (i.e. a convex polytope with the
property that at each vertex of it there are precisely $n$ codimension
one faces with normals which form a $\Z$\--basis of the integral
lattice $\Z^n$) determines the isomorphism type of $M$, and moreover,
$M$ is a toric variety. He also showed that starting from any Delzant
polytope one can construct a symplectic manifold with a Hamiltonian
torus action for which its associated polytope is the one we started
with.

From the viewpoint of symplectic geometry, the situation described by
the momentum polytope is, nevertheless, very rigid.  It is natural to
wonder whether any of these striking results persist in the case where
the torus is replaced by a non\--compact group acting
Hamiltonianly. The seemingly symplest case happens when the group is
$\R^n$, and the study of these $\R^n$\--actions is precisely the goal
of the theory of integrable systems.
Building on previous work of the authors, and of many other authors,
we shall present a ``Delzant'' type classification for integrable
systems, for which one component of the system is generated by a
Hamiltonian circle action; these systems are called semitoric.

Let $(M, \, \omega)$ be a connected, symplectic $4$\--dimensional
manifold, where we do not assume that $M$ is compact.  Any smooth
function $f$ on $M$ induces a unique vector field $\ham{f}$ on $M$
which satisfies $\omega(\ham{f},\, \cdot)=-\op{d}\!f$. It is called
the \emph{Hamiltonian vector field induced by $f$}.  An
\emph{integrable system on $M$} is a pair of real\--valued smooth
functions $J$ and $H$ on $M$, for which the Poisson bracket
$\{J,\,H\}:=\omega(\ham{J},\, \ham{H})$ identically vanishes on $M$,
and the differentials $\op{d}\!J$, $\op{d}\!H$ are almost\--everywhere
linearly independendent.  Of course, here $(J,\,H) \colon M \to \R^2$
is the analogue of the momentum map in the case of a torus action.  In
some local symplectic coordinates of $M$, $(x,\, y,\, \xi,\, \eta)$,
the symplectic form $\omega$ is given by $\op{d}\!  \xi \wedge
\op{d}\!x +\op{d}\! \eta\wedge \op{d}\!y$, and the vanishing of the
Poisson brackets $\{J,\,H\}$ amounts to the partial differential
equation
  $$
  \frac{\partial J}{\partial \xi} \, \frac{\partial H}{\partial x} -
  \frac{\partial J}{\partial x} \, \frac{\partial H}{\partial \xi} +
  \frac{\partial J}{\partial \eta} \, \frac{\partial H}{\partial y} -
  \frac{\partial J}{\partial y} \, \frac{\partial H}{\partial \eta}
  =0.
  $$
  This condition is equivalent to $J$ being constant along the
  integral curves of $\ham{H}$ (or $H$ being constant along the
  integral curves of $\ham{J}$).

  A {\em semitoric integrable system} on $M$ is an integrable system
  for which the component $J$ is a proper momentum map for a
  Hamiltonian circle action on $M$, and the associated map
  $F:=(J,\,H):M\to\R^2$ has only non\--degenerate singularities in the
  sense of Williamson, without real\--hyperbolic blocks.  We also use
  the term {\em $4$\--dimensional semitoric integrable system} to
  refer to the triple $(M,\, \omega,\, (J,\,H))$.  Recall that the
  properness of $J$ means that the preimage by $J$ of a compact set is
  compact in $M$ (which is immediate if $M$ is compact), and the
  non\--degeneracy hypothesis for $F$ means that, if $p$ is a critical
  point of $F$, then there exists a 2 by 2 matrix $B$ such that, if we
  denote $\tilde{F}=B\circ F,$ one of the following situations holds
  in some local symplectic coordinates near $p$~:
  \begin{itemize}
  \item[(1)] $\tilde{F}(x,\,y,\,\xi,\,\eta)=(\eta +
    \mathcal{O}(\eta^2),\,x^2+\xi^2 + \mathcal{O}((x,\,\xi)^3))$
  \item[(2)]
    $\op{d}^2_m\!\tilde{F}(x,\,y,\,\xi,\,\eta)=(x^2+\xi^2,\,y^2+\eta^2)$
  \item[(3)]
    $\op{d}^2_m\!\tilde{F}(x,\,y,\,\xi,\,\eta)=(x\xi+y\eta,\,x\eta-y\xi)$
  \end{itemize}
  The first case is called a transversally --- or codimension 1 ---
  {\em elliptic singularity}; the second case is an {\em
    elliptic\--elliptic singularity}; the last case is a {\em
    focus\--focus singularity}.  In \cite[Th.~6.2]{pelayovungoc} the
  authors constructed, starting from a given semitoric integrable
  system on a $4$\--manifold, a collection of five symplectic
  invariants associated with it and proved that these completely
  determine the integrable system up to isomorphisms.  The goal of the
  present is to complement that work, by providing a general method to
  construct \emph{any} $4$\--dimensional semitoric integrable system
  starting from an abstract collection of ingredients.
Both throughout \cite{pelayovungoc} and the present paper we
make a generic assumption on our semitoric systems; this is
explained in Section \ref{taylor:sec}.

  The symplectic invariants constructed in \cite{pelayovungoc}, for a
  given $4$\--dimensional semitoric integrable system, are the
  following: (i) \emph{the number of singularities invariant}: an
  integer $m_f$ counting the number of isolated singularities; (ii)
  \emph{the singularity type invariant}: a collection of $m_f$
  infinite Taylor series on two variables which classifies locally the
  type of singularity; (iii) \emph{the polygon invariant}: the
  equivalence class of a weighted rational
  convex\footnote{generalizing the Delzant polygon and which may be
    viewed as a bifurcation diagram} polygon $$\Big(\Delta,\,
  (\ell_j)_{j=1}^{m_f},\, (\epsilon_j)_{j=1}^{m_f}\Big).$$ Here
  $\Delta$ is a convex polygon in $\R^2$, the $\ell_j$ are vertical
  lines intersecting $\Delta$ and the $\epsilon_j$ are $\pm 1$ signs
  giving each line $\ell_j$ an orientation;
  \begin{figure}[htb]
    \begin{center}
      \includegraphics{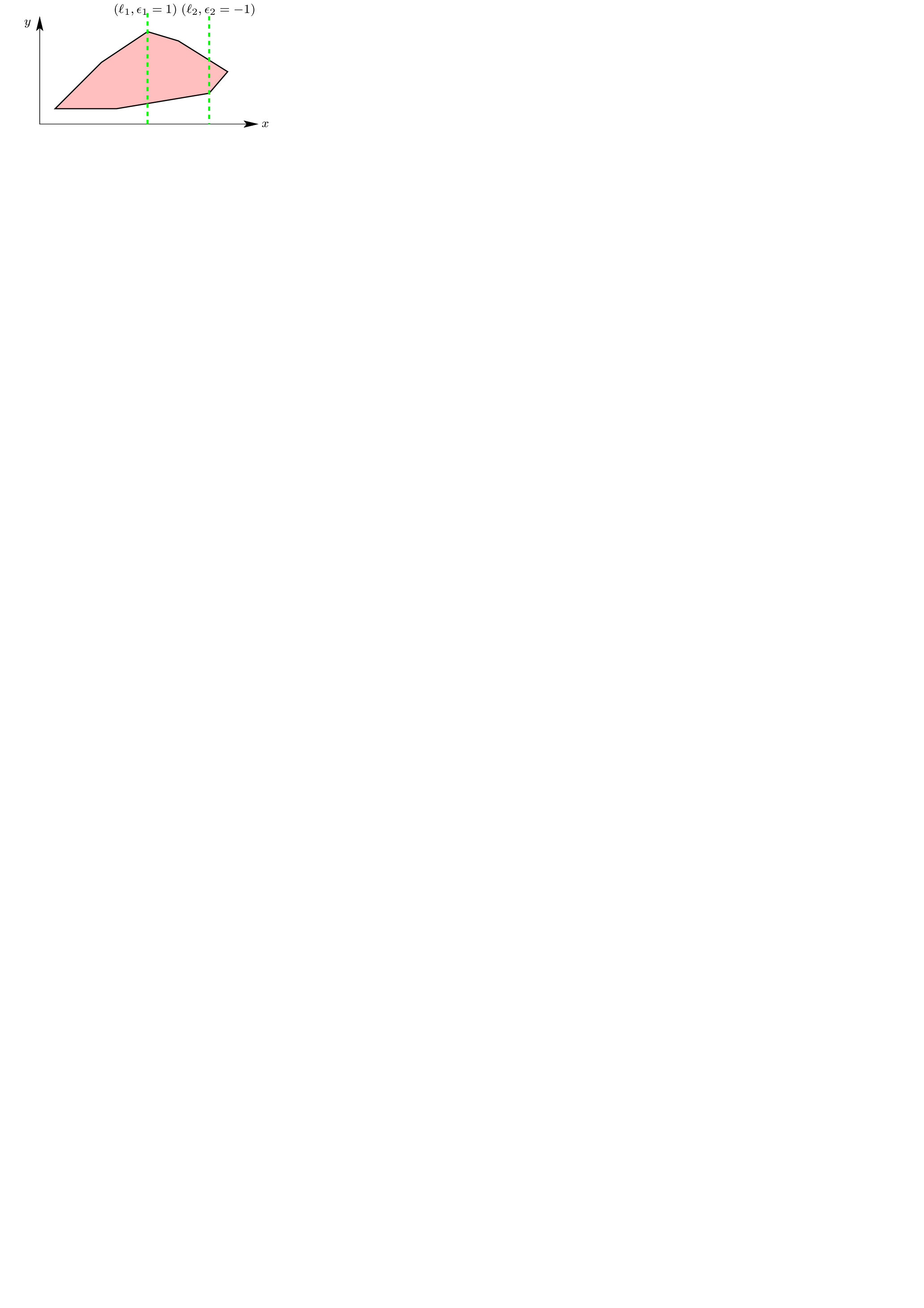}
      \caption{Weighted polygon $(\Delta, (\ell_1,\, \ell_2),\,
        (1,\,-1))$.}
    \end{center}
    \label{AF3}
  \end{figure}
  (iv) \emph{the volume invariant}: $m_f$ numbers measuring volumes of
  certain submanifolds at the singularities; (v) \emph{the twisting
    index invariant}: $m_f$ integers measuring how twisted the system
  is around singularities. This is a subtle invariant, which depends
  on the representative chosen in (iii).  Here, we write $m_f$ to
  emphasize that the singularities that $m_f$ counts are focus\--focus
  singularities.  We then proved that two semitoric systems $(M,\,
  \omega_1,\,(J_1,\,H_1))$ and $(M,\, \omega_2,\, (J_2,\,H_2))$ are
  isomorphic if and only if they have the same invariants (i)--(v),
  where an isomorphism is a symplectomorphism $ \varphi \colon M_1 \to
  M_2 $ such that $ \varphi^*(J_2,\,H_2)=(J_1,\,f(J_1,\,H_1)) $ for
  some smooth function $f$.

  We have found that some restrictions on these symplectic invariants
  must be imposed.  Indeed, we call ``semitoric list of ingredients''
  the following collection of items (i)\--(v): (i) any integer number
  $0 \le m_f<\infty$; (ii) an $m_{f}$\--tuple of real formal power
  series in two variables, with vanishing constant term and first
  terms $\sigma_1\,X+\sigma_2\,Y$ with $\sigma_2 \in [0,\,2\, \pi)$;
  (iii) a Delzant weighted polygon $\Big(\Delta,\,
  (\ell_j)_{j=1}^{m_f},\, (\epsilon_j)_{j=1}^{m_f}\Big)$, of
  complexity $m_f$, where $\Delta$ is a polygon, the $\ell_j$ are
  again vertical lines intersecting $\Delta$ and the $\epsilon_j$ are
  $\pm1$ signs giving each line $\ell_j$ an orientation; here the
  Delzant property for $\Delta$ is not the standard one for polygons,
  but rather a more delicate one for weighted polygons which takes
  into account the presence of the lines $\ell_j$; (iv) an
  $m_f$\--tuple of positive real numbers $(h_i)_{i=1}^{m_{f}}$ such
  that $ 0 < h_i < \textup{length}(\Delta \cap \ell_i) $ for each $i
  \in \{1,\ldots,m_f\}$.  (v) an arbitrary collection of $m_f$
  integers $(k_i)_{i=1}^{m_f}$.  Our main theorem (Theorem
  \ref{existence:thm}) says that, starting from a semitoric list of
  ingredients one can construct a $4$\--dimensional semitoric
  integrable system $(M,\, \omega,\, (J,\,H))$ such that the list of
  its invariants is equal to this semitoric list.  Moreover, $M$ is
  compact if and only the polygon in item (iii) is compact.

  With this in mind we may formulate the uniqueness theorem in
  \cite{pelayovungoc} as: two systems constructed in this fashion are
  isomorphic if and only if ingredients (i), (ii) and (iv) are
  identical for both systems and ingredients (iii) and (v) are related
  by some simple transformation. This is why, when we formulate the
  existence theorem, ingredients (iii) and (v) are given by orbits of
  respectively weighted polygons and pondered weighted polygons, under
  the action of certain groups.  Together with
  \cite[Th.~6.2]{pelayovungoc}, this gives the aforementioned
  classification (Theorem \ref{class:thm}) .


  While the construction of semitoric systems in the present paper is
  relatively self\--contained, we are indebted to the articles of
  Delzant \cite{delzant}, Atiyah \cite{atiyah} and
  Guillemin\--Sternberg \cite{gs}, in the context of Hamiltonian torus
  actions, which served as an inspiration to study the more general
  situation of integrable systems with circular Hamiltonian
  symmetry. Furthermore, many works have played an important role in
  our investigation of $4$\--dimensional semitoric systems, by serving
  as stepping stones to construct the symplectic invariants in
  \cite{vungoc} associated with semitoric systems; notably we used
  work of Dufour\--Molino \cite{dufour-molino}, Eliasson
  \cite{eliasson-these}, Duistermaat \cite{duistermaat},
  Miranda\--Zung \cite{miranda-zung} and V\~u Ng\d oc
  \cite{vungoc0},\cite{vungoc}.

  In this work, we are in a situation where the moment map $(J,H)$ is
  a ``torus fibration'' with singularities, and its base space becomes
  endowed with a singular integral affine structure. These structures
  have been studied in the context of integrable systems (in
  particular by Zung~\cite{zung}), but also became a central concept
  in the works by Symington \cite{s}, Symington\--Leung \cite{ls} in
  the context of symplectic geometry and topology, and by
  Gross\--Siebert \cite{grs1}, \cite{grs2}, \cite{grs3} and
  \cite{grs4}, among others, in the context of mirror symmetry and
  algebraic geometry.  In fact, our ingredients (i), (iii) and (iv)
  could have been expressed in terms of this affine structure. However
  ingredients (ii) and (v) do not appear in the affine
  structure. Nevertheless it is expected that these ingredients play
  an important role in the quantum theory of integrable systems. We
  hope to be able to explore these ideas in the future.

  \vspace{1.5mm}

  The paper is structured as follows: in Section \ref{uniqueness:sec}
  we recall how to construct a collection of symplectic invariants for
  a semitoric system, and state more precisely that two semitoric
  systems are isomorphic precisely when they have the same invariants;
  this was done in \cite{pelayovungoc}, and we need to review it here
  in order to state the existence theorem for semitoric systems. 
  In Section \ref{glueingsection} we explain the symplectic glueing construction
  (i.e. how to glue symplectic manifolds equipped with momentum
  maps). The
  last two sections of the paper are respectively devoted to state the
  main theorem and to prove it. One might argue that the proof is more
  informative than the statement, as it gives an {\em explicit}
  construction of all semitoric integrable systems in dimension $4$.
  \\
  \\
  {\bf Acknowledgements}. We are grateful to Denis Auroux for offering
  many insightful comments, and for pointing out the papers of Gross
  and Siebert.

  \section{Review of the uniqueness theorem for semitoric systems}
  \label{uniqueness:sec}

  We recall the definition of the invariants that we assigned to a
  semitoric integrable system in our previous paper
  \cite{pelayovungoc}, to which we refer to further details.  Then we
  state the uniqueness theorem proved therein.

  \subsection{Taylor series invariant}
  \label{taylor:sec}

  It was proven in \cite{vungoc} that a semitoric system $(M,\,
  \omega,\, F:=(J,\,H))$ has finitely many focus\--focus critical
  values $c_1,\,\dots,\,c_{m_f}$, that
  if we write $B:=F(M)$ then
   the set of regular values of
  $F$ is $\op{Int}(B)\setminus\{c_1,\dots,c_{m_f}\}$, that
  the boundary of $B$ consists of all images of elliptic
  singularities, and that the fibers of $F$ are connected. The integer
  $m_f$ was the first invariant that we associated with such a system.
  Let $i$ be an integer, with $1 \le i \le m_f$.  
  
  We assume that the
  critical fiber $ \mathcal{F}_m:=F^{-1}(c_i) $ contains only one
  critical point $m$, which according to Zung \cite{zung} is a generic condition,
  and let $\mathcal{F}$ denote the associated
  singular foliation.  Moreover,
  we will make for simplicity an even stronger
generic assumption~:
\begin{equation}
  \begin{split}
    ~&\text{If $m$ is a focus-focus critical point for } F,\\
    &\text{then } m \text{ is the unique critical point of the level set
    } J^{-1}(J(m)).
  \end{split} \nonumber
\end{equation}
A semitoric system is \emph{simple} if this genericity assumption is satisfied.

These conditions imply that the values $J(c_1),\,\dots,\,J(c_{m_f})$ are
pairwise distinct. We assume
throughout the article that the critical values $c_i$'s are
\emph{ordered} by their $J$-values~:  $J(c_1)< J(c_2) < \cdots < J(c_{m_f})$.

  By Eliasson's theorem \cite{eliasson-these}
  there exist symplectic coordinates $(x,\, y,\, \xi,\,\eta)$ in a
  neighborhood $U$ around $m$ in which $(q_1,\,q_2)$, given by
  \begin{equation}
    q_1=x\xi+y\eta, \,\, q_2=x\eta-y\xi, 
    \label{equ:cartan}
  \end{equation}
  is a momentum map for the foliation $\mathcal{F}$; here the critical
  point $m$ corresponds to coordinates $(0,\,0,\,0,\,0)$.

  Fix $A'\in \mathcal{F}_m\cap (U\setminus\{m\})$ and let $\Sigma$
  denote a small 2\--dimensional surface transversal to $\mathcal{F}$
  at the point $A'$, and let $\Omega$ be the open neighborhood of
  $\mathcal{F}_m$ which consists of the leaves which intersect the
  surface $\Sigma$.  Since the Liouville foliation in a small
  neighborhood of $\Sigma$ is regular for both $F$ and
  $q=(q_1,\,q_2)$, there is a local diffeomorphism $\varphi$ of $\R^2$
  such that $q=\varphi \circ {F}$, and we can define a global momentum
  map $\Phi=\varphi \circ{F}$ for the foliation, which agrees with $q$
  on $U$.  Write $\Phi:=(H_1,\,H_2)$ and $\Lambda_z:=\Phi^{-1}(z)$.
  Note that $ \Lambda_0=\mathcal{F}_m.  $ It follows
  from~(\ref{equ:cartan}) that near $m$ the $H_2$\--orbits must be
  periodic of primitive period $2\pi$ for any point in a (non-trivial)
  trajectory of $\mathcal{X}_{H_1}$.

  Suppose that $A \in\Lambda_z$ for some regular value $z$.  Let
  $\tau_1(z)>0$ be the time it takes the Hamiltonian flow associated
  with $H_1$ leaving from $A$ to meet the Hamiltonian flow associated
  with $H_2$ which passes through $A$, and let $\tau_2(z)\in\R/2\pi\Z$
  the time that it takes to go from this intersection point back to
  $A$, hence closing the trajectory. Write
  $z=(z_1,\,z_2)=z_1+\op{i}z_2$, and let $\op{ln} z$ for a fixed
  determination of the logarithmic function on the complex plane. Let
  \begin{equation}
    \left\{
      \begin{array}{ccl}
        \sigma_1(z) & = & \tau_1(z)+\Re(\op{ln} z) \\
        \sigma_2(z) & = & \tau_2(z)-\Im(\op{ln} z),
      \end{array}
    \right.
    \label{equ:sigma}
  \end{equation}
  where $\Re$ and $\Im$ respectively stand for the real an imaginary
  parts of a complex number.  V\~u Ng\d oc proved in
  \cite[Prop.\,3.1]{vungoc0} that $\sigma_1$ and $\sigma_2$ extend to
  smooth and single\--valued functions in a neighbourhood of $0$ and
  that the differential 1\--form $\sigma:=\sigma_1\,
  \DD{}z_1+\sigma_2\, \DD{}z_2$ is closed.  Notice that if follows
  from the smoothness of $\sigma_2$ that one may choose the lift of
  $\tau_2$ to $\R$ such that $\sigma_2(0)\in[0,\,2\pi)$. This is the
  convention used throughout.  Following \cite[Def.~3.1]{vungoc0} ,
  let $S_i$ be the unique smooth function defined around $0\in\R^2$
  such that
  \begin{eqnarray}
    \DD{}S_i=\sigma,\,\, \,\, S_i(0)=0
  \end{eqnarray}
  The Taylor expansion of $S_i$ at $(0,\,0)$ is denoted by
  $(S_i)^\infty$.
 
\begin{definition}
  The Taylor expansion $(S_i)^{\infty}$ is a formal power series in
  two variables with vanishing constant term, and we say that
  $(S_i)^\infty$ is the \emph{Taylor series invariant of $(M,\,
    \omega,\, (J,\,H))$ at the focus\--focus point $c_i$}.
\end{definition}

\subsection{Semitoric polygon invariant}
\label{semitoric:sec}

The plane $\RM^2$ is equipped with its standard affine structure with
origin at $(0,0)$, and orientation.  Let
$\textup{Aff}(2,\RM):=\textup{GL}(2,\RM)\ltimes\RM^2$ be the group of
affine transformations of $\RM^2$. Let
$\textup{Aff}(2,\ZM):=\textup{GL}(2,\ZM)\ltimes\RM^2$ be the subgroup
of \emph{integral-affine} transformations.

Let $\mathfrak{I}$ be the subgroup of $\op{Aff}(2,\,\Z)$ of those
transformations which leave a vertical line invariant, or
equivalently, an element of $\mathfrak{I}$ is a vertical translation
composed with a matrix $T^k$, where $k \in \Z$ and
\begin{eqnarray}
  T^k:=\left(
    \begin{array}{cc}
      1 & 0\\ k & 1
    \end{array}
  \right) \in \op{GL}(2,\, \Z).
\end{eqnarray}
Let $\ell\subset\R^2$ be a vertical line in the plane, not necessarily
through the origin, which splits it into two half\--spaces, and let
$n\in\Z$. Fix an origin in $\ell$.  Let $t^n_{\ell} \colon \R^2
\to \R^2$ be the identity on the left half\--space, and $T^n$ on the
right half\--space. By definition $t^n_{\ell}$ is piecewise
affine.  Let $\ell_i$ be a vertical line through the focus\--focus
value $c_i=(x_i,\,y_i)$, where $1 \le i \le m_f$, and for any tuple
$\vec n:=(n_1,\,\dots,\,n_{m_f})\in\Z^{m_f}$ we set
$t_{\vec n}:=t^{n_1}_{\ell_1}\circ\, \cdots\, \circ
t^{n_{m_f}}_{\ell_{m_f}}$.
The map $t_{\vec n}$ is piecewise affine.

\begin{definition}\label{defi:rational-convex-polygon}
  A \emph{rational convex polygon} is the convex hull of
  a discrete set of points in $\RM^2$, with the condition that each
  edge is directed along a vector with rational
  coefficients.\footnote{it is important to note that a convex polygon
    is not necessarily compact for us. A more accurate denomination
    would be a rational convex polyhedron.}
\end{definition}

Let $B_{\op{r}}:=\op{Int}(B)\setminus \{c_1,\,\ldots,\,c_{m_f}\}$, which
is precisely the set of regular values of $F$. 
Given a sign $\epsilon_i\in\{-1,+1\}$, let
$\ell_i^{\epsilon_i}\subset\ell_i$ be the vertical half line starting
at $c_i$ at extending in the direction of $\epsilon_i$~: upwards if
$\epsilon_i=1$, downwards if $\epsilon_i=-1$. Let $
\ell^{\vec\epsilon}:= \bigcup_{i=1}^{m_f}\ell_i^{\epsilon_i}.  $ In
Th.~3.8 in \cite{vungoc} it was shown that for
$\vec\epsilon\in\{-1,+1\}^{m_f}$ there exists a homeomorphism $f =
f_\epsilon\colon B \to \R^2$, modulo a left composition by a
transformation in $\mathfrak{I}$, such that $f|_{(B\setminus
  \ell^{\vec\epsilon})}$ is a diffeomorphism into its image
$\Delta:=f(B)$, which is a \emph{rational convex polygon},
$f|_{(B_r\setminus \ell^{\vec\epsilon})}$ is affine (it sends the
integral affine structure of $B_r$ to the standard structure of
$\R^2$) and $f$ preserves $J$: i.e.
\[
f(x,\,y)=(x,\,f^{(2)}(x,\,y)).
\]
$f$ satisfies further properties \cite{pelayovungoc}, which are
relevant for the uniqueness proof.  In order to arrive at $\Delta$ one
cuts $(J,\,H)(M) \subset \R^2$ along each of the vertical half-lines
$\ell_i^{\epsilon_i}$. Then the resulting image becomes simply
connected and thus there exists a global 2-torus action on the
preimage of this set. The polygon $\Delta$ is just the closure of the
image of a toric momentum map corresponding to this torus action.

We can see that this polygon is not unique. The choice of the ``cut
direction'' is encoded in the signs $\epsilon_j$, and there remains
some freedom for choosing the toric momentum map. Precisely, the choices
and the corresponding homeomorphisms $f$ are the following~:
\begin{itemize}
\item[(a)] {\em an initial set of action variables $f_0$ of the form
    $(J,\,K)$} near a regular Liouville torus in \cite[Step 2, pf. of
  Th.~3.8]{vungoc}.  If we choose $f_1$ instead of $f_0$, we get a
  polygon $\Delta'$ obtained by left composition with an element of
  $\mathfrak{I}$.  Similarly, if we choose $f_1$ instead of $f_0$, we
  obtain $f$ composed on the left with an element of $\mathfrak{I}$;
\item[(b)] {\em a tuple $\vec{\epsilon}$ of $1$ and $-1$}.  If we
  choose $\vec{\epsilon'}$ instead of $\vec{\epsilon}$ we get $
  \Delta'=t_{\vec{u}}(\Delta) $ with
  $u_i=(\epsilon_i-\epsilon'_i)/2$, by \cite[Prop. 4.1,
  expr. (11)]{vungoc}.  Similarly instead of $f$ we obtain
  $f'=t_{\vec{u}} \circ f$.
\end{itemize}

\begin{lemma}
\label{lemm:mu}
  Once $f_0$ and $\vec\epsilon$ have been fixed as in (a) and (b),
  respectively, then there exists a unique toric momentum map $\mu$ on
  $M_r:=F^{-1}(\textup{Int}{B}\setminus(\bigcup \ell_j^{\epsilon_j}))$
  which preserves the foliation $\mathcal{F}$, and coincides with
  $f_0\circ F$ where they are both defined. Then, necessarily, the
  first component of $\mu$ is $J$, and we have
  \begin{equation}
    \overline{\mu(M_r)}=\Delta.
    \label{equ:mu-delta}  
  \end{equation}
\end{lemma}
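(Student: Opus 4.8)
\begin{skproof}
The plan is to obtain $\mu$ from a global action--angle chart on $M_r$ and then remove the residual affine ambiguity by means of $f_0$. Write $W:=\op{Int}(B)\setminus\bigcup_j\ell_j^{\epsilon_j}$; since the values $J(c_j)$ are pairwise distinct this coincides with $B_{\op{r}}\setminus\ell^{\vec\epsilon}$, and $F$ restricts to a map $M_r=F^{-1}(W)\to W$ that is a proper submersion with compact connected fibres: properness follows from that of $J$ together with $F^{-1}(K)\subset J^{-1}(\op{pr}_1(K))$ for compact $K$, the submersion property from the fact that $W$ contains no critical value of $F$, and the connected Lagrangian torus structure of the fibres from the Arnold--Liouville theorem and \cite{vungoc}. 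Moreover $W$ is open, connected and simply connected in $\R^2$: $\op{Int}(B)$ is convex, hence contractible, and each $\ell_j^{\epsilon_j}$ meets $\op{Int}(B)$ in a slit running from an interior point to $\partial B$ or to infinity along a vertical line, so removing finitely many such slits, which lie on distinct vertical lines, keeps the set connected and simply connected -- for instance because its complement in $S^2$ stays connected. By Duistermaat's theorem \cite{duistermaat} the Lagrangian fibration $F\colon M_r\to W$ therefore admits global action--angle coordinates; in particular there is a toric momentum map $\mu_0\colon M_r\to\R^2$ whose fibres are exactly the leaves of $\mathcal F$, so $\mu_0$ preserves $\mathcal F$.

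\emph{Existence.} Let $\Omega\subset M_r$ be the nonempty open set where $f_0\circ F$ is defined. On $\Omega$ both $\mu_0$ and $f_0\circ F$ are momentum maps of the same germ of regular Lagrangian fibration -- their common fibres being the leaves of $\mathcal F$ -- so they differ there by an element of $\op{Aff}(2,\Z)=\op{GL}(2,\Z)\ltimes\R^2$: there are $A\in\op{GL}(2,\Z)$ and $b\in\R^2$ with $f_0\circ F=A\mu_0+b$ on $\Omega$. Set $\mu:=A\mu_0+b$ on all of $M_r$. Then $\mu$ is again a toric momentum map -- for the $\T^2$--action obtained from the original one by the automorphism determined by $A$, with image translated by $b$ -- it still preserves $\mathcal F$, and by construction $\mu=f_0\circ F$ on $\Omega$.

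\emph{Uniqueness, first component and image.} Any foliation--preserving toric momentum map $\mu$ on $M_r$ has the leaves of $\mathcal F$, i.e.\ the fibres of $F$, as its fibres, so it descends to an immersion $\hat\mu\colon W\to\R^2$ which is affine for the integral affine structure of $B_{\op{r}}$ (the one for which local action variables are affine charts). Any two such developing maps of the connected affine manifold $W$ differ by post--composition with an element of $\op{Aff}(2,\R)$; the normalization $\mu=f_0\circ F$ on $\Omega$ forces two candidates to agree on a nonempty open set, and an affine self--map of $\R^2$ that is the identity on a nonempty open set is the identity, whence uniqueness. Since $f_0=(J,K)$ in the value variables, the first component of $f_0\circ F$ is $J$; hence the affine functions $\hat\mu^{(1)}$ and $\op{pr}_1$ (the descent of $J$ to $W$) agree on the nonempty open subset $F(\Omega)$ of the connected set $W$, therefore everywhere, so the first component of $\mu$ is $J$. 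Finally, by \cite[Th.~3.8]{vungoc} the map $f|_W$ is an affine developing map of $W$ normalized by $f_0$ at the base torus, exactly as $\hat\mu$ is; hence $\hat\mu=f|_W$, and since $W$ is dense in $B$ and $f\colon B\to\Delta$ is a homeomorphism, $\overline{\mu(M_r)}=\overline{\hat\mu(W)}=\overline{f(W)}=f(B)=\Delta$, which is \eqref{equ:mu-delta}.

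The step I expect to be most delicate is ensuring that the $\op{Aff}(2,\Z)$--freedom left in the global action map is removed by precisely the datum $f_0$ that was used in \cite{vungoc} to normalize $f$ and $\Delta$, so that no spurious transformation intervenes between $\mu$ and $f\circ F$; a secondary point that needs care is the simple connectedness of $W$, which is what permits the appeal to Duistermaat's theorem.
\end{skproof}
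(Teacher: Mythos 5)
Your argument follows essentially the same path as the paper's, which is far terser: it cites simple connectedness of the slit interior $\op{Int}{B}\setminus\bigcup\ell_j^{\epsilon_j}$ and then points to the construction of $\Delta$ in \cite{vungoc}, since $\mu=f\circ F$. You flesh out the existence step (Duistermaat's theorem over a simply connected base, then normalizing by $f_0$) and the identification of the image, which is a reasonable elaboration of what the paper leaves implicit.

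One factual slip worth fixing: you justify the simple connectedness of $W$ by asserting that $\op{Int}(B)$ is convex. In general $B=F(M)$ is \emph{not} convex for a semitoric system; the whole point of the theory is that $B$ only becomes a convex polygon after the piecewise-integral-affine straightening $f$, and what is convex is $\Delta=f(B)$, not $B$ itself. The correct justification is that $f\colon B\to\Delta$ from \cite[Th.~3.8]{vungoc} is a homeomorphism carrying $\op{Int}(B)$ onto the open convex set $\op{Int}(\Delta)$, so $\op{Int}(B)$ is contractible; removing finitely many vertical slits on distinct lines then leaves $W$ connected and simply connected. With that repair, the rest of the argument -- global action--angle via Duistermaat over $W$, the $\op{Aff}(2,\Z)$ ambiguity removed by $f_0$, the immersion $\hat\mu$ agreeing with $f|_W$, and hence $\overline{\mu(M_r)}=\overline{f(W)}=f(B)=\Delta$ -- is sound and matches the paper's intent.
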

\begin{demo}
  The uniqueness follows from the fact that
  $\textup{Int}{B}\setminus(\bigcup \ell_j^{\epsilon_j})$ is simply
  connected, and~(\ref{equ:mu-delta}) follows directly from the
  construction of $\Delta$ in~\cite{vungoc}, since $\mu=f\circ F$.
\end{demo}

We sometimes call $\mu$ the (generalized) momentum 
map associated with
the polytope $\Delta$.

We need now for our purposes to formalize choices (a) and (b) in a
single geometric object.  Let $\op{Polyg}(\R^2)$ be the space of
rational convex polygons in $\R^2$. Let $\op{Vert}(\R^2)$ be the set
of vertical lines in $\R^2$.  A {\em weighted polygon of complexity
  $s$} is a triple of the form
\[
\Delta_{\scriptop{w}}=\Big(\Delta,\,
(\ell_{\lambda_j})_{j=1}^s,\, (\epsilon_j)_{j=1}^s\Big)
\]
where $s$ is a non\--negative integer, $\Delta \in \op{Polyg}(\R^2)$,
$\ell_{\lambda_j} \in \op{Vert}(\R^2)$ for every $j \in
\{1,\ldots,s\}$, and $\epsilon_j \in \{-1,\,1\}$ for every $j \in
\{1,\ldots,s\}$,
\[
\op{min}_{s \in \Delta}\pi_1(s)<\lambda_1<\ldots<\lambda_s<
\op{max}_{s \in \Delta}\pi_1(s),
\]
where $\pi_1 \colon \R^2 \to \R$ is the canonical projection
$\pi_1(x,\,y)=x$ and $\pi_1(\ell_{\lambda_j})=\lambda_j$.  For any $s \in
\N$, let $G_s:=\{-1,\,+1\}^s$ and let $\mathcal{G}:=\{T^k\,\, | \,\, k
\in \Z\}$. The group $\mathcal{G}$ acts naturally on $\RM^2$ by the
affine transformations $T^k$. Obviously, it sends a rational convex
polygon to a rational convex polygon. It corresponds to the
transformation described in (a). On the other hand, the transformation
described in (b) can be encoded by the group $G_s$ acting on the
triple $\Delta_{\scriptop{w}}$ by the formula
\begin{equation}
  (\epsilon'_j)_{j=1}^s \cdot \Big(
  \Delta,\,(\ell_{\lambda_j})_{j=1}^s,\, (\epsilon_j)_{j=1}^s\Big)=
  \Big(t_{\vec u}(\Delta),\,(\ell_{\lambda_j})_{j=1}^s,\,
  (\epsilon'_j\,\epsilon_j)_{j=1}^s\Big),\label{equ:actionGs}
\end{equation}
where $\vec u=((\epsilon_i-\epsilon'_i)/2)_{i=1}^s$.  This, however,
does not always preserve the convexity of $\Delta$, as is easily seen
when $\Delta$ is the unit square centered at the origin and
$\lambda_1=0$. However, when $\Delta$ comes from the construction
described above for a semitoric system $(J,H)$, the convexity is
preserved. Thus, we say that 

\begin{definition} \label{admissiblepolygon:def}
A weighted polygon is \emph{admissible}
when the $G_s$\--action preserves convexity.  We denote by
$\mathcal{W}\op{Polyg}_s(\R^2)$ the space of all admissible weighted
polygons of complexity $s$.
\end{definition}

The set $G_s\times \mathcal{G}$ is an abelian group, with the natural
product action.  The action of $G_s\times \mathcal{G}$ on
$\mathcal{W}\op{Polyg}_s(\R^2)$, is given by:
\[
((\epsilon'_j)_{j=1}^s,\,T^k) \cdot \Big(
\Delta,\,(\ell_{\lambda_j})_{j=1}^s,\, (\epsilon_j)_{j=1}^s\Big)=
\Big(t_{\vec u}(T^k(\Delta)),\,(\ell_{\lambda_j})_{j=1}^s,\,
(\epsilon'_j\,\epsilon_j)_{j=1}^s\Big),
\]
where $\vec u=((\epsilon_i-\epsilon'_i)/2)_{i=1}^s$.

\begin{definition} \label{defi:semitoric-polygon} We call a
  \emph{semitoric polygon} the equivalence class of an admissible
  weighted polygon under the $(G_{m_f} \times \mathcal{G})$\--action.
\end{definition}

Let $\Delta$ be a rational convex polygon obtained from the momentum
image $(J,\,H)(M)$ according to the above construction of cutting
along the vertical half-lines $\ell_1^{\epsilon_1},
\ldots,\ell_{m_f}^{\epsilon_{m_f}}$.
\begin{definition} \label{generalizedpolytope} The {\em semitoric
    polygon invariant of} $(M,\, \omega,\, (J,\,H))$ is the semitoric
  polygon equal to the $(G_{m_f} \times \mathcal{G})$\--orbit
  \begin{eqnarray} \label{polin} (G_{m_f} \times \mathcal{G})\cdot
    \Big(\Delta,\, (\ell_j)_{j=1}^{m_f},\,
    (\epsilon_j)_{j=1}^{m_f}\Big) \in
    \mathcal{W}\op{Polyg}_{m_f}(\R^2)/(G_{m_f} \times \mathcal{G}).
  \end{eqnarray}
\end{definition}

\subsection{The Volume Invariant}
\label{volumesection}

Consider a focus\--focus critical point $m_i$ whose image by $(J,\,H)$
is $c_i$, and let $\Delta$ be a rational convex polygon corresponding
to the system $(M,\, \omega,\, (J,\,H))$.  If $\mu$ is a toric
momentum map for the system $(M, \, \omega,\,(J, \, H))$ corresponding
to $\Delta$, then the image $\mu(m_i)$ is a point in the interior of
$\Delta$, along the line $\ell_i$.  We proved in \cite{pelayovungoc}
that the vertical distance
\begin{eqnarray} \label{height:eq} h_i:=\mu(m_i)-\min_{s \in \ell_i
    \cap \Delta} \pi_2(s)>0
\end{eqnarray}
is independent of the choice of momentum map $\mu$. Here $\pi_2 \colon
\R^2 \to \R$ is $\pi_2(x,\,y)=y$. The reasoning behind writing the
word ``volume'' in the name of this invariant is that it has the
following geometric interpretation: the singular manifold
$Y_i=J^{-1}(c_i)$ splits into $Y_i\cap\{H>H(m_i)\}$ and
$Y_i\cap\{H<H(m_i)\}$, and $h_i$ is the Liouville volume of
$Y_i\cap\{H<H(m_i)\}$.

\subsection{The Twisting\--Index Invariant}
\label{indexsection}

The twisting-index expresses the fact that there is, in a
neighbourhood of any focus-focus point $c_i$, a \emph{privileged toric
  momentum map} $\nu$. This momentum map, in turn, is due to the
existence of a unique hyperbolic radial vector field in a
neighbourhood of the focus-focus fiber. Therefore, one can view the
twisting-index as a dynamical invariant. Since any semitoric polygon
defines a (generalized) toric momentum map $\mu$, we will be able to
define the twisting-index as the integer $k_i\in \ZM$ such that
\[
\DD\mu = T^{k_i} \DD\nu.
\]
We could have defined equivalently the twisting-indices by comparing
the privileged momentum maps at different focus-focus points.

The precise definition of $k_i$ requires some care, which we explain
now.

Let $\Delta_{\scriptop{w}}= \Big(\Delta,\,
(\ell_j)_{j=1}^{m_f},\, (\epsilon_j)_{j=1}^{m_f}\Big)$ be as in
expression (\ref{polin}).  Let $\ell:=\ell_i^{\epsilon_i}\subset\R^2$
be the vertical \emph{half\--line} starting at $c_i$ and pointing in
the direction of $\epsilon_i\, e_2$, where $e_1,\,e_2$ are the
canonical basis vectors of $\mathbb{R}^2$.  By Eliasson's theorem,
there is a neighbourhood $W=W_i$ of the focus\--focus critical point
$m_i=F^{-1}(c_i)$, a local symplectomorphism $\phi:(\R^4,\,0)\to W$,
and a local diffeomorphism $g$ of $(\R^2,\,0)$ such that $F \circ
\phi= g\circ q$, where $q$ is given by~\eqref{equ:cartan}. Since
$q_2\circ\phi^{-1}$ has a $2\pi$\--periodic Hamiltonian flow, it is
equal to $J$ in $W$, up to a sign.  Composing if necessary $\phi$ by
$(x,\xi)\mapsto(-x,-\xi)$ one can assume that $q_2=J\circ\phi$ in $W$,
i.e. $g$ is of the form $g(q_1,\,q_2)=(q_2,\,g_2(q_1,\,q_2))$.  Upon
composing $\phi$ by
$(x,\,y,\,\xi,\,\eta)\mapsto(-\xi,\,-\eta,\,x,\,y)$, which changes
$(q_1,\,q_2)$ into $(-q_1,\,q_2)$, one can assume that $\frac{\partial
  g_2}{\partial q_1}(0)>0$.  In particular, near the origin $\ell$ is
transformed by $g^{-1}$ into the positive real axis if $\epsilon_i=1$,
or the negative real axis if $\epsilon_i=-1$.

\begin{figure}[htb]
  \begin{center}
    \includegraphics{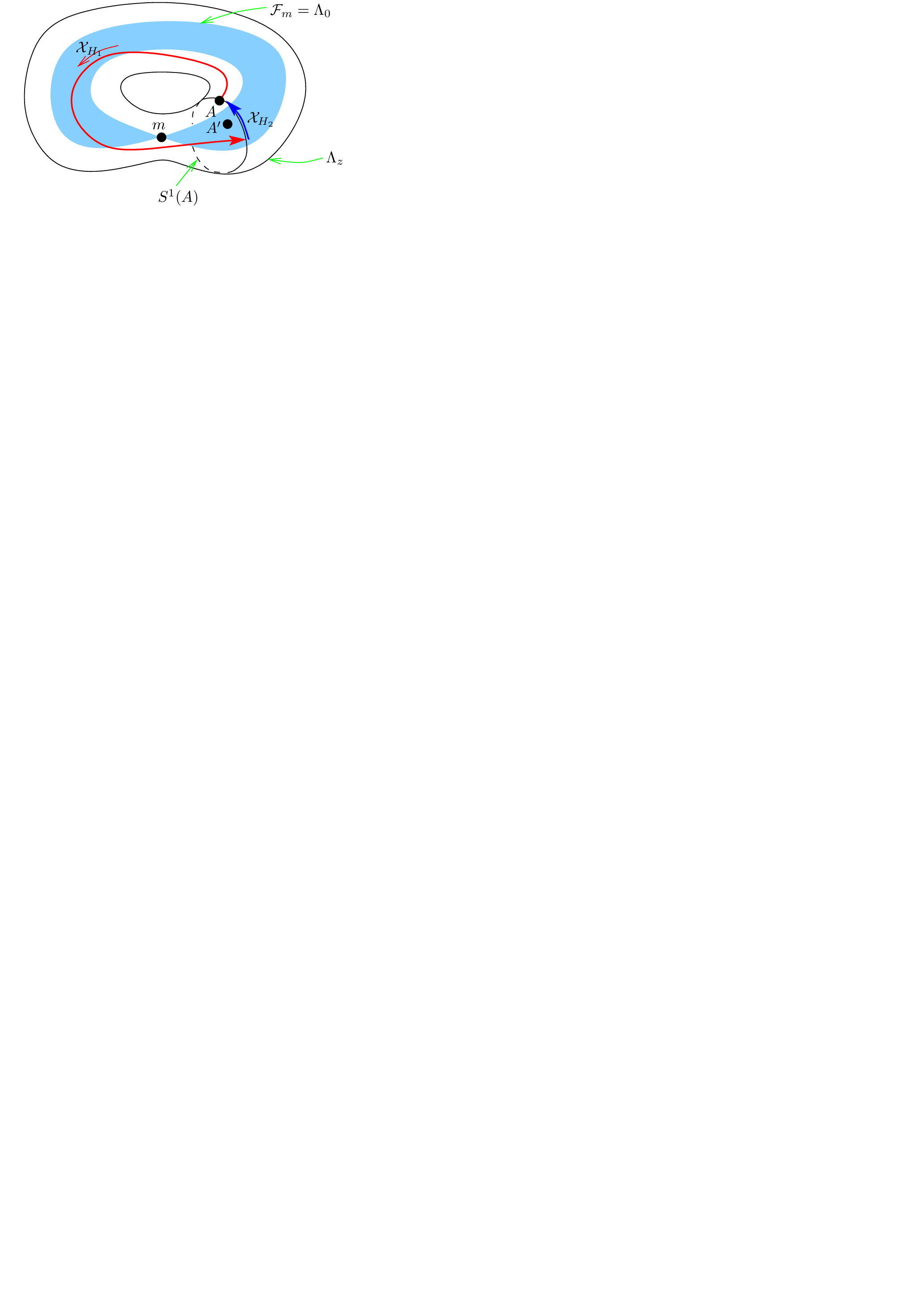}
    \caption{Singular foliation near the leaf $\mathcal{F}_m$, where
      $S^1(A)$ denotes the $S^1$\--orbit generated by $H_2$.}
  \end{center}
  \label{AF6}
\end{figure}

Let us now fix the origin of angular polar coordinates in $\R^2$ on
the \emph{positive} real axis, let $V=F(W)$ and define
$\tilde{F}=(H_1,\,H_2)=g^{-1}\circ F$ on $F^{-1}(V)$ (notice that
$H_2=J$). Recall that near any regular torus there exists a
Hamiltonian vector field $\mathcal{X}_p$, whose flow is
$2\pi$\--periodic, defined by
  $$
  2\pi\mathcal{X}_p =
  (\tau_1\circ\tilde{F})\mathcal{X}_{H_1}+(\tau_2\circ\tilde{F})\mathcal{X}_J,
  $$
  where $\tau_1$ and $\tau_2$ are functions on $\R^2\setminus\{0\}$
  satisfying~\eqref{equ:sigma}, with $\sigma_1(0)>0$. In fact $\tau_2$
  is multivalued, but we determine it completely in polar coordinates
  with angle in $[0,\,2\pi)$ by requiring continuity in the angle
  variable and $\sigma_2(0)\in[0,\,2\pi)$. In case $\epsilon_i=1$,
  this defines $\mathcal{X}_p$ as a smooth vector field on
  $F^{-1}(V\setminus\ell)$. In case $\epsilon_i=-1$ we keep the same
  $\tau_2$\--value on the negative real axis, but extend it by
  continuity in the angular interval $[\pi,\,3\pi)$. In this way
  $\mathcal{X}_p$ is again a smooth vector field on
  $F^{-1}(V\setminus\ell)$.  Let $\mu$ be the generalized toric
  momentum map associated to $\Delta$. On $F^{-1}(V\setminus \ell$),
  $\mu$ is smooth, and its components $(\mu_1,\,\mu_2)=(J,\,\mu_2)$
  are smooth Hamiltonians, whose vector fields
  $(\mathcal{X}_J,\mathcal{X}_{\mu_2})$ are tangent to the foliation,
  have a $2\pi$-periodic flow, and are \emph{a.e.}  independent. Since
  the couple $(\mathcal{X}_J,\mathcal{X}_p)$ shares the same
  properties, there must be a matrix $A\in\textup{GL}(2,\Z)$ such that
  $(\mathcal{X}_J,\mathcal{X}_{\mu_2}) = A
  (\mathcal{X}_J,\mathcal{X}_p)$. This is equivalent to saying that
  there exists an integer $k_i\in\Z$ such that
$$
\mathcal{X}_{\mu_2} = k_i\mathcal{X}_{J} + \mathcal{X}_p.
$$

It was shown in \cite[Prop.~5.4]{pelayovungoc} that $k_i$ is well
defined, i.e. does not depend on choices. The integer $k_i$ is called
the \emph{twisting index of $\Delta_{\scriptop{w}}$ at the
  focus\--focus critical value $c_i$}.  It was shown in
\cite[Lem.~5.6]{pelayovungoc} that there exists a unique smooth
function $H_p$ on $F^{-1}(V\setminus \ell)$ the Hamiltonian vector
field of which is $\mathcal{X}_p$ and such that $\lim_{m\to
  m_i}H_p=0.$ The toric momentum map $\nu:=(J,\,H_p)$ is called {\em
  the privileged momentum map for $(J,\,H)$} around the focus\--focus
value $c_i$.  If $k_i$ is the twisting index of $c_i$, one has $\DD\mu =
T^{k_i} \DD\nu$ on $F^{-1}(V)$. However, the twisting index does depend
on the polygon $\Delta$. Thus, since we want to define an invariant of
the initial semitoric system, we need to take into account the actions
of $G_s$ and $\mathcal{G}$.

If we transform the polygon $\Delta$ by a global affine transformation
in $T^r\in\mathcal{G}$ this has no effect on the privileged momentum
map $\nu$, whereas it changes $\mu$ into $T^r \mu$.  From this
characterization it follows that all the twisting indices $k_i$ are
replaced by $k_i+r$.  It was shown in \cite[Prop.~5.8]{pelayovungoc}
that if two weighted polygons $\Delta_{\scriptop{w}}$ and
$\Delta'_{\scriptop{weight}}$ lie in the same $G_{m_f}$\--orbit, then
the twisting indices $k_i,\,k'_i$ associated to
$\Delta_{\scriptop{w}}$ and $\Delta'_{\scriptop{weight}}$ at
their respective focus\--focus critical values $c_i,\,c'_i$ are equal.

For any integer $s$, consider the action of the product $G_s \times
\mathcal{G}$ on the space $\mathcal{W}\op{Polyg}_s(\R^2) \times \Z^{s}
$:
\[
((\epsilon'_j)_{j=1}^s),\,T^k) \star \Big(
\Delta,\,(\ell_{\lambda_j})_{j=1}^s,\, (\epsilon_j)_{j=1}^s,\,
(k_j)_{j=1}^{s}\Big)
=\Big(t_{\vec{u}}(T^k(\Delta)),\,(\ell_{\lambda_j})_{j=1}^s,\,
(\epsilon'_j\,\epsilon_j)_{j=1}^s, \, (k_j+k)_{j=1}^{s}\Big)
\]
where $\vec u=((\epsilon_j-\epsilon'_j)/2)_{j=1}^s$, for all integer
$j$, with $j \in \{1,\, \ldots,\, s\}$.

\begin{definition} \label{indexinv} The {\em twisting\--index
    invariant} of $(M,\, \omega,\, (J,\,H))$ is the $(G_{m_f} \times
  \mathcal{G})$\--orbit of weighted polygon pondered by twisting
  indices at the focus\--focus singularities of the system given by
  \begin{eqnarray}
    (G_{m_f} \times \mathcal{G})\star \Big(\Delta,\, (\ell_j)_{j=1}^{m_f},\,
    (\epsilon_j)_{j=1}^{m_f},\, (k_j)_{j=1}^{m_f}\Big) \in (\mathcal{W}\op{Polyg}_{m_f}(\R^2) \times \Z^{m_f})/(G_{m_f} \times \mathcal{G}).
  \end{eqnarray}
\end{definition}

\subsection{Uniqueness theorem}

To a semitoric system we assign the above list of invariants and state
the main theorem in \cite{pelayovungoc}.

\begin{definition} \label{listofinvariants} Let $(M, \, \omega, \, (J,
  \, H))$ be a $4$\--dimensional simple semitoric integrable system.  The
  {\em list of invariants of $(M, \, \omega, \, (J, \, H))$} consists
  of the following items.
  \begin{itemize}
  \item[(i)] The integer number $0 \le m_f<\infty$ of focus\--focus
    singular points.
  \item[(ii)] The $m_{f}$\--tuple $((S_i)^{\infty})_{i=1}^{m_{f}}$,
    where $(S_i)^{\infty}$ is the Taylor series of the
    $i^{\scriptop{th}}$ focus\--focus point.
  \item[(iii)] The semitoric polygon invariant, c.f. Definition
    \ref{generalizedpolytope}.
  \item[(iv)] The volume invariant, i.e. the $m_f$\--tuple
    $(h_i)_{i=1}^{m_{f}}$, where $h_i$ is the height of the
    $i^{\scriptop{th}}$ focus\--focus point.
  \item[(v)] The twisting\--index invariant, c.f. Definition
    \ref{indexinv}.
  \end{itemize}
\end{definition}

\begin{theorem}[Th.~6.2, \cite{pelayovungoc}] \label{mainthm} The two
  $4$\--dimensional simple semitoric integrable systems $(M_1, \, \omega_1,
  (J_1,\,H_1))$ and $(M_2, \, \omega_2, (J_2,\,H_2))$ are isomorphic
  if and only if the list of invariants (i)\--(v), as in Definition
  \ref{listofinvariants}, of $(M_1, \, \omega_1, (J_1,\,H_1))$ is
  equal to the list of invariants (i)\--(v) of $(M_2, \, \omega_2,
  (J_2,\,H_2))$.
\end{theorem}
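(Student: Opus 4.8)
The plan is to establish both implications. The forward one is immediate: each item (i)--(v) was attached to a semitoric system by an intrinsic construction (Sections \ref{taylor:sec}--\ref{indexsection}), so it is preserved by any isomorphism. For the converse, assume $(M_1,\omega_1,(J_1,H_1))$ and $(M_2,\omega_2,(J_2,H_2))$ carry the same list of invariants; I must produce a symplectomorphism $\varphi\colon M_1\to M_2$ with $\varphi^*J_2=J_1$ and $\varphi^*H_2=f(J_1,H_1)$. Because the fibers of $F_k:=(J_k,H_k)$ are connected (by \cite{vungoc}), this is the same as a symplectomorphism intertwining $J_1$ with $J_2$ and mapping the Liouville foliation $\mathcal{F}_1$ leaf-wise onto $\mathcal{F}_2$. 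I build $\varphi$ by patching a map over the regular and transversally-elliptic part with semi-global maps near the focus-focus fibers; since the invariant $m_f$ agrees, there is an order-preserving bijection of the focus-focus values $c_i^{(1)}\leftrightarrow c_i^{(2)}$ respecting the ordering by $J$.

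\emph{The regular and elliptic part.} Using that the semitoric polygon invariant (iii), and hence also (v), are equal as $(G_{m_f}\times\mathcal{G})$-orbits, fix representatives $(\Delta,(\ell_j)_j,(\epsilon_j)_j)$ and a cut vector $\vec\epsilon$ common to both systems. By Lemma \ref{lemm:mu} each system then carries a generalized toric momentum map $\mu_k$ on $M_{k,r}:=F_k^{-1}(\op{Int}B_k\setminus\bigcup_j\ell_j^{\epsilon_j})$, with first component $J_k$ and $\overline{\mu_k(M_{k,r})}=\Delta$. The base $\op{Int}B_k\setminus\bigcup_j\ell_j^{\epsilon_j}$ is simply connected, so global action-angle coordinates (Duistermaat \cite{duistermaat}) together with the Delzant-type rigidity of Lagrangian torus fibrations over a contractible base (which have no symplectic modulus beyond the integral affine structure, here realized by $\mu_k$, in the spirit of \cite{delzant}) furnish a symplectomorphism $\varphi_r\colon M_{1,r}\to M_{2,r}$ with $\mu_2\circ\varphi_r=\mu_1$; in particular $\varphi_r$ preserves $J$ and leaves. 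The normal form (1), which carries no modulus, lets me extend $\varphi_r$ over collar neighbourhoods of the transversally-elliptic part of $\partial B_k$, and an elliptic-elliptic vertex is treated similarly via (2). So far only the polygon has been used.

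\emph{The focus-focus part.} Near $c_i$ the decisive input is V\~u Ng\d oc's semi-global classification \cite{vungoc0}: with the logarithm and period conventions fixing $\sigma$ as in Section \ref{taylor:sec}, the Taylor series $(S_i)^\infty$ (invariant (ii)) is a complete symplectic invariant of $\mathcal{F}_k$ on a saturated neighbourhood $\Omega_i^{(k)}$ of the focus-focus fiber $\mathcal{F}_{m_i}$. Equality of $(S_i)^\infty$ therefore yields a symplectomorphism $\varphi_i\colon\Omega_i^{(1)}\to\Omega_i^{(2)}$ sending leaves to leaves and $m_i^{(1)}$ to $m_i^{(2)}$; moreover, since $\DD S_i(0)=(\sigma_1(0),\sigma_2(0))$ with $\sigma_1(0)>0$ and by the normalization $H_p\to0$, one can take $\varphi_i$ to intertwine the privileged momentum maps $\nu_1$ and $\nu_2$ of Section \ref{indexsection} on $F_k^{-1}(V\setminus\ell)$.

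\emph{Gluing; the main obstacle.} It remains to make $\varphi_r$ and each $\varphi_i$ agree on the overlap $\Omega_i^{(1)}\cap M_{1,r}$, which lies over a small slit disk around $c_i$ and is therefore contractible; on it $\varphi_r^{-1}\circ\varphi_i$ is a leaf- and $J$-preserving symplectomorphism, hence a fiberwise translation given by the Hamiltonian flow of some function $g$ of the action variables. The subtle point is that for the glued map to extend across the pinched fiber $\mathcal{F}_{m_i}$ the correction must be chosen so that $\varphi_i$, after adjustment, extends over $\mathcal{F}_{m_i}$ together with $\varphi_r$; the obstruction to this is precisely an integer and a real number. The integer is the difference of twisting indices: on the overlap $\DD\mu_k=T^{k_i^{(k)}}\DD\nu_k$ (Section \ref{indexsection}), so $\varphi_i$ relates $\mu_1$ to $T^{k_i^{(1)}-k_i^{(2)}}\mu_2$, and a nonzero exponent would force a fiberwise shear in the direction of the cycle that collapses on $\mathcal{F}_{m_i}$ --- which never extends over the pinch; hence one needs $k_i^{(1)}=k_i^{(2)}$, i.e. equality of (v). The real number is the Liouville volume of the lower half of $J_k^{-1}(c_i)$, i.e. the volume invariant (iv): matching $h_i$ places $\mu_1(m_i^{(1)})$ and the image of $\Omega_i^{(1)}$ under $\varphi_r$ at the same point of $\Delta\cap\ell_i$ and kills the radial ambiguity obstructing the extension. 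Granting $h_i^{(1)}=h_i^{(2)}$ and $k_i^{(1)}=k_i^{(2)}$, choose $g$ accordingly, extend it to a smooth function of $(J_1,H_1)$ supported near $F_1^{-1}(c_i)$, and replace $\varphi_r$ by its composition with the Hamiltonian flow of that function --- this is exactly where the latitude of $f$ in the notion of isomorphism is spent; the new $\varphi_r$ now agrees with $\varphi_i$ near $\mathcal{F}_{m_i}$, so they glue and extend $\varphi_r$ over $\mathcal{F}_{m_i}$. Performing this at all $m_f$ focus-focus fibers (their $J$-values are distinct, so the $\Omega_i$ can be taken pairwise disjoint) and over the elliptic collars produces the desired global symplectomorphism $\varphi$. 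The real work lies in this last paragraph: that matching (iv) and (v) exactly annihilates the extension obstruction, and that the successive ``function of the momentum map'' corrections can be organized so as not to undo what was already arranged over the regular and elliptic parts --- this is carried out in detail in \cite{pelayovungoc}.
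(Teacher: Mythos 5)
The paper does not prove this theorem; it is quoted from \cite{pelayovungoc} (Theorem 6.2 there) and invoked as a black box, so there is no in-paper proof to compare against. That said, your outline is a faithful reconstruction of the strategy used in \cite{pelayovungoc}: the forward implication is indeed immediate from the intrinsic nature of the five invariants; for the converse one does glue a $J$- and leaf-preserving symplectomorphism from (a) an action-angle map over the simply connected slit locus attached to a common representative of the polygon invariant (iii) via Lemma~\ref{lemm:mu}, (b) Eliasson normal forms along the elliptic boundary, and (c) V\~u Ng\d oc's semi-global focus-focus classification governed by invariant (ii); and invariants (iv) and (v) play exactly the role you assign them, fixing the height of each node and the integer shear between $\mu$ and the privileged momentum map $\nu$ so that the pieces extend across each pinched fiber.

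The substantive gap — which you flag explicitly rather than fill — is twofold: first, the assertion that the pair $\bigl(h_i^{(1)}-h_i^{(2)},\,k_i^{(1)}-k_i^{(2)}\bigr)$ is the \emph{complete} obstruction to extending the glued symplectomorphism across $\mathcal{F}_{m_i}$ (not merely a pair of necessary conditions), and second, the claim that the successive corrections by Hamiltonian flows of functions of the momentum map can be organized compatibly over all $i$ and over the elliptic collars without one correction undoing another. Both require genuine work (essentially the content of the ``$\Phi$'' and twisting-number propositions in \cite{pelayovungoc}) and a self-contained proof would have to supply them; your acknowledgement of this is appropriate, but as written the argument is a sketch, not a proof.
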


\section{The symplectic glueing construction}
\label{glueingsection}

In this section we explain how to symplectically glue an arbitrary
collection of symplectic manifolds $(M_{\alpha})_{\alpha \in A}$
equipped with continuous, proper maps $F_{\alpha} \colon M_{\alpha}
\to \mathbb{R}$ to form a new symplectic manifold $M$ equipped with a
continuous, proper map which restricted to $M_{\alpha}$ is equal to
$F_{\alpha}$, c.f.  Theorem \ref{theo:glueing}.
The results of this section, while perhaps well\--known among experts,
we could not find in the literature.

\subsection{Glueing maps, glueing groupoid}

Let $A$ be an arbitrary set of indices, and let $(M_\alpha)_{\alpha\in
  A}$ be a family of sets.  Recall that the \emph{disjoint union of
  the sets $M_\alpha$}, $\alpha\in A$ is the subset of
$(\bigcup_{\alpha\in A} M_\alpha)\times A$ defined by
\[
\bigsqcup_{\alpha\in A} M_\alpha:= \{(x,\,\alpha)\, | \, x\in
M_\alpha\}.
\]
We denote by $j_\alpha$, $\alpha\in A$, the natural inclusions~:
$j_\alpha :~ M_\alpha \hookrightarrow \bigsqcup_{\alpha\in A}
M_\alpha,\,\, x \mapsto (x,\, \alpha) $.  Notice that if $B\subset A$
then $\bigsqcup_{\alpha\in B} M_\alpha\subset \bigsqcup_{\alpha\in A}
M_\alpha$.  Of course, if all $M_\alpha$'s are pairwise disjoint, as
sets, then there is a natural bijection bewteen $\bigsqcup_{\alpha\in
  A} M_\alpha$ and the usual union $\bigcup_{\alpha\in A} M_\alpha$.

If the $M_\alpha$'s are topological spaces, the disjoint union
$\bigsqcup_{\alpha\in A} M_\alpha$ is endowed with the final
topology~: the finest topology that makes the inclusions $j_\alpha$
continuous. In particular $j_\alpha(M_\alpha)$ is an open set in
$\bigsqcup_{\alpha\in A} M_\alpha$.

\begin{definition}
  A \emph{glueing map for the family} $(M_\alpha)_{\alpha\in A}$ is a
  homeomorphism $\phy:U_\alpha\fleche U_\beta$ where
  $(\alpha,\beta)\in A^2$, and $U_\alpha\subset M_\alpha$ and
  $U_\beta\subset M_\beta$ are open sets.
\end{definition}
In this text we use the standard set\--theoretical convention that the
notation $\phy$ includes the source and target sets $U_\alpha$ and
$U_\beta$; in particular the notation $\phy(x)$ implies $x\in
U_\alpha$. When required, we use the notation $U_\phy^s$ and
$U_\phy^t$ for the source and target sets of $\phy$ (assuming
$U_\phy^t=\phy(U_\phy^s)$).
\begin{definition}
  Let $\mathcal{G}$ be a collection of glueing maps for
  $(M_\alpha)_{\alpha\in A}$.  The associated \emph{glueing groupoid}
  $G$ is the groupoid generated by the set of all restrictions of all
  glueing maps $\phy\in\mathcal{G}$ to open subsets of the source
  sets, with the natural groupoid law~: $\phy_2\circ\phy_1$ exists
  whenever the image of the source set of $\phy_1$ is included in the
  source set of $\phy_2$.
\end{definition}
\begin{definition}
  We say that $\mathcal{G}$ is \emph{free}
  when there is no nontrivial $\phy\in G$ with both source and target
  in the same set $M_\alpha$.
\end{definition}

\subsection{Topological glueing}

We define now the general patching construction.  Throughout this
section, and unless otherwise stated, we do not require topological
spaces to be paracompact or Hausdorff.

\begin{definition}
  Let $(M_\alpha)_{\alpha\in A}$ be a collection of pairwise disjoint
  topological spaces, and $G$ an associated glueing groupoid. From
  this we define the set $M$, called the \emph{glueing of
    $(M_\alpha)_{\alpha\in A}$ along $G$}, as $M :=
  \bigsqcup_{\alpha\in A} M_\alpha / \sim$ where $\sim$ is the
  equivalence relation on $\bigsqcup_{\alpha\in A} M_\alpha$ defined
  by
  \[
  (x,\,\alpha) \sim (x',\beta) \ssi \left(x=x' \,\text{ or there
      exists }\, \phy\in G \,\, \textup{with}\,\, x'=\phy(x)\right).
  \]
\end{definition}
Let us check that $\sim$ is indeed an equivalence relation.  The
reflexivity is obvious. If $(x,\,\alpha)\sim(x',\,\beta)$ and
$(x,\,\alpha)\neq (x',\,\beta)$ then $\phy(x)=x'$ for some $\phy\in
G$. But $G$ is a groupoid so $\phy^{-1}\in G$ and of course
$x=\phy^{-1}(x')$, so $(x',\,\beta)\sim(x,\,\alpha)$, which proves the
symmetry property.  Finally, if $(x,\,\alpha)\sim(x',\,\beta)$ and
$(x',\,\beta)\sim(x'',\,\gamma)$ then there exist $\phy$ anf $\phy'$
in $G$ such that $\phy(x)=x'$ and $\phy'(x')=x''$. Therefore
$\phy'\circ\phy$ is well\--defined on an open neighbourhood of $x$, so
$\phy'\circ\phy\in G$, and $(x,\,\alpha)\sim(x'',\,\gamma)$, so we
have shown the transitivity property.

Here again we could have dropped the assumption that the $M_\alpha$'s
are pairwise disjoint, or we could have used a standard union instead
of a disjoint union.

The following lemma follows from the definition of the equivalence
relation.
\begin{lemma}
  \label{lemm:preimage3}
  Let $\pi:\bigsqcup_{\alpha\in A} M_\alpha \fleche M$ be the quotient
  map.  For any subset $K\subset M_\alpha$, one has
  \[
  \pi^{-1}(y_\alpha(K))=j_\alpha(K)\cup \left(\bigcup_{\phy\in G}
    j_{\alpha(\phy)}(\phy(K\cap U_\phy^s))\right),
  \]
  where it is assumed that the union is over all $\phy$ whose source
  set $U_\phy^s$ intersects $K$, and $\alpha(\phy)$ is the element in
  $A$ such that $U_\phy^t\subset M_{\alpha(\phy)}$.
\end{lemma}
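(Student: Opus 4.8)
The plan is to prove the identity by double inclusion, simply unwinding the definition of the equivalence relation $\sim$ while keeping careful track of which summand $M_\alpha$ of the disjoint union each point lives in. Write $y_\alpha := \pi\circ j_\alpha$, so that $y_\alpha(K)=\pi(j_\alpha(K))$ is the image of $K$ in $M$; the claim is that $\pi^{-1}(y_\alpha(K))$ coincides with the displayed union.

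For the inclusion $\supseteq$, first note $j_\alpha(K)\subseteq \pi^{-1}(\pi(j_\alpha(K)))=\pi^{-1}(y_\alpha(K))$ trivially. Next, fix $\phy\in G$ whose source $U_\phy^s$ meets $K$ and pick $x\in K\cap U_\phy^s$; since $x':=\phy(x)$ satisfies $x'=\phy(x)$, the defining clause of $\sim$ gives $(x,\alpha)\sim(x',\alpha(\phy))$, hence $\pi(j_{\alpha(\phy)}(\phy(x)))=\pi(j_\alpha(x))\in y_\alpha(K)$. Letting $x$ range over $K\cap U_\phy^s$ shows $j_{\alpha(\phy)}(\phy(K\cap U_\phy^s))\subseteq \pi^{-1}(y_\alpha(K))$, and taking the union over all admissible $\phy$ yields the containment.

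For the inclusion $\subseteq$, take $(z,\beta)$ with $\pi(z,\beta)\in y_\alpha(K)$, i.e. $(z,\beta)\sim(x,\alpha)$ for some $x\in K$. If $(z,\beta)=(x,\alpha)$ then $(z,\beta)\in j_\alpha(K)$ and we are done. Otherwise, unwinding the definition of $\sim$ and using that $G$ is a groupoid (hence closed under taking inverses of glueing maps), there is a $\psi\in G$ with $z=\psi(x)$; thus $x\in K\cap U_\psi^s$, so $U_\psi^s$ meets $K$, and $z\in \psi(K\cap U_\psi^s)$. Since the $M_\alpha$'s are pairwise disjoint, the index of an element of $\bigsqcup M_\alpha$ is determined by the element itself: $x\in M_\alpha$ forces $U_\psi^s\subseteq M_\alpha$, while $z=\psi(x)\in U_\psi^t\subseteq M_{\alpha(\psi)}$ forces $\beta=\alpha(\psi)$. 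Hence $(z,\beta)=j_{\alpha(\psi)}(z)\in j_{\alpha(\psi)}(\psi(K\cap U_\psi^s))$, one of the sets in the union.

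I do not expect a serious obstacle: the statement is essentially a bookkeeping exercise once the definition of $\sim$ is spelled out. The only point needing a little care is the last step of the $\subseteq$ direction, where one must invoke both the groupoid structure of $G$ (to replace a glueing map by its inverse, realizing the relation in the orientation $z=\psi(x)$ rather than $x=\psi(z)$) and the pairwise disjointness of the $M_\alpha$'s (to pin down $\beta=\alpha(\psi)$ from the ambient membership of $z$).
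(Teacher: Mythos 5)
Your proof is correct and is precisely the intended argument: the paper itself offers no proof, remarking only that the lemma ``follows from the definition of the equivalence relation,'' and your double-inclusion argument is exactly what that remark has in mind, with the two points worth making explicit (using the groupoid structure to replace a glueing map by its inverse, and pairwise disjointness to pin down the index) correctly identified and handled.
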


\begin{lemma}
  \label{lemm:open3}
  For the natural quotient topology on $M$, the maps
  $y_\alpha=\pi\circ j_\alpha: M_\alpha\fleche M$, $\alpha\in A$ are
  open and continuous.  They are injective if and only if
  $\mathcal{G}$ is free.
\end{lemma}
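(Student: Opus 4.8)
The plan is to prove the two assertions of Lemma~\ref{lemm:open3} separately: first continuity and openness of each $y_\alpha=\pi\circ j_\alpha$ for the quotient topology on $M$, and then the equivalence between injectivity of the $y_\alpha$ and freeness of $\mathcal{G}$. For continuity, I would simply observe that $\pi$ is continuous by definition of the quotient topology and $j_\alpha$ is continuous by definition of the final topology on $\bigsqcup_{\alpha\in A}M_\alpha$, so $y_\alpha$ is a composition of continuous maps. For openness, the key point is Lemma~\ref{lemm:preimage3}: given an open set $V\subset M_\alpha$, I want to show $y_\alpha(V)$ is open in $M$, which by definition of the quotient topology means showing $\pi^{-1}(y_\alpha(V))$ is open in $\bigsqcup_{\beta\in A}M_\beta$. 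By Lemma~\ref{lemm:preimage3},
\[
\pi^{-1}(y_\alpha(V))=j_\alpha(V)\cup\Big(\bigcup_{\phy\in G}j_{\alpha(\phy)}\big(\phy(V\cap U_\phy^s)\big)\Big),
\]
and each term on the right is open: $j_\alpha(V)$ is open because $V$ is open in $M_\alpha$ and $j_\alpha$ is an open inclusion onto an open subset of the disjoint union, while $\phy(V\cap U_\phy^s)$ is open in $U_\phy^t$ (hence in $M_{\alpha(\phy)}$) because $\phy$ is a homeomorphism between open sets and $V\cap U_\phy^s$ is open, and again $j_{\alpha(\phy)}$ carries opens to opens. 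A union of open sets is open, so $\pi^{-1}(y_\alpha(V))$ is open, proving $y_\alpha$ is an open map.

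For the injectivity characterization, I would argue both directions. Suppose $\mathcal{G}$ is free, equivalently (by definition) the glueing groupoid $G$ contains no nontrivial element with source and target in the same $M_\alpha$. If $y_\alpha(x)=y_\alpha(x')$ with $x,x'\in M_\alpha$, then $(x,\alpha)\sim(x',\alpha)$, so either $x=x'$ (done) or there is $\phy\in G$ with $\phy(x)=x'$; but then $\phy$ is an element of $G$ with source set a neighbourhood of $x$ in $M_\alpha$ and target set in $M_\alpha$, so by freeness $\phy$ must be trivial, forcing $x=x'$. Hence $y_\alpha$ is injective for every $\alpha$. Conversely, suppose all $y_\alpha$ are injective and, for contradiction, that $\mathcal{G}$ is not free: there is a nontrivial $\phy\in G$ with $U_\phy^s,U_\phy^t\subset M_\alpha$ for some $\alpha$. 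Nontriviality means $\phy$ is not the identity on its source, so there is a point $x\in U_\phy^s$ with $\phy(x)\ne x$; setting $x'=\phy(x)\in M_\alpha$ we get $(x,\alpha)\sim(x',\alpha)$, hence $y_\alpha(x)=y_\alpha(x')$ with $x\ne x'$, contradicting injectivity of $y_\alpha$. Therefore $\mathcal{G}$ must be free.

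I do not anticipate a serious obstacle here; the lemma is essentially an unwinding of the definitions of the quotient topology, the final topology on the disjoint union, and the equivalence relation $\sim$, together with the already-established formula of Lemma~\ref{lemm:preimage3}. The one mild subtlety to state carefully is what ``nontrivial $\phy\in G$'' means: an element of the groupoid is trivial precisely when it is the identity map of some open set, so a nontrivial element with source and target in the same $M_\alpha$ is exactly one that moves at least one point — this is the fact used in both directions of the second assertion, and it is worth spelling out so that the reader sees where freeness is used. The only other point deserving a word is that $j_\alpha(M_\alpha)$ is open in the disjoint union (noted already in the text before Definition of glueing map), which is what makes $j_\alpha$ an open map onto its image and hence an open map into $\bigsqcup_{\beta}M_\beta$.
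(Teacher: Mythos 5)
Your proof is correct and follows essentially the same approach as the paper's: continuity via composition of continuous maps, openness via Lemma~\ref{lemm:preimage3} (you spell out why each term in the union is open, which the paper leaves implicit), and the injectivity/freeness equivalence by the same case analysis on whether the identification $(x,\alpha)\sim(x',\alpha)$ comes from equality or from a groupoid element with source and target in $M_\alpha$. The extra care you take in unpacking what ``nontrivial'' means is a helpful clarification but does not change the argument.
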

\begin{proof}
  By definition of the quotient topology, the map $\pi$ is
  continuous. Hence $y_\alpha=\pi\circ j_\alpha$ is
  continuous. Finally if $U\subset M_\alpha$ is open, then if follows
  from Lemma~\ref{lemm:preimage3} that $\pi^{-1}(y_\alpha(U))$ is open
  in $\bigsqcup_{\alpha\in A} M_\alpha$. This means that $y_\alpha(U)$
  is open in $M$.

  Fix $\alpha\in A$. Let $x$ and $x'$ be elements of $M_\alpha$. If
  $y_\alpha(x)=y_\alpha(x')$ then either $x=x'$ or $\phy(x)=x'$ for
  some $\phy\in G$. The latter is ruled out by the assumption that
  there is no nontrivial $\phy\in G$ with both source and target in
  $M_\alpha$. Thus in this case $y_\alpha$ is injective. If the
  condition is violated then there exist $x\neq x'$ in $M_\alpha$ with
  $j_\alpha(x)\sim j_\alpha(x')$ so $y_\alpha$ cannot be injective.
\end{proof}

\subsection{Smooth glueing}


\begin{lemma}
  \label{lemm:smooth3}
  If all $M_\alpha$'s are smooth manifolds, all $\phy\in\mathcal{G}$
  are diffeomorphisms and $\mathcal{G}$ is free then there exists a
  unique smooth structure on $M$ for which the maps $y_\alpha$,
  $\alpha\in A$ are embeddings.
\end{lemma}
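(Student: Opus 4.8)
The plan is to build the smooth atlas on $M$ directly from the charts of the pieces $M_\alpha$, transported via the maps $y_\alpha$, and to check that the resulting transition maps are smooth using the fact that the glueing maps $\phy\in\mathcal{G}$ are diffeomorphisms. First I would fix, for each $\alpha\in A$, a smooth atlas $\{(V_{\alpha,i},\psi_{\alpha,i})\}_i$ on $M_\alpha$, and declare the collection $\{(y_\alpha(V_{\alpha,i}),\,\psi_{\alpha,i}\circ(y_\alpha|_{V_{\alpha,i}})^{-1})\}_{\alpha,i}$ to be a candidate atlas on $M$. This makes sense because, by Lemma~\ref{lemm:open3}, each $y_\alpha$ is an open continuous injection (freeness of $\mathcal{G}$ is used here), hence a homeomorphism onto its open image $y_\alpha(M_\alpha)\subset M$; thus $y_\alpha(V_{\alpha,i})$ is open in $M$ and the composite is a homeomorphism onto an open subset of Euclidean space. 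The images $y_\alpha(M_\alpha)$ cover $M$ since $\pi$ is surjective and $\bigsqcup M_\alpha = \bigcup j_\alpha(M_\alpha)$, so the candidate charts cover $M$.

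The heart of the argument is checking that two charts coming from different pieces $\alpha$ and $\beta$ are $C^\infty$\--compatible. So suppose $p\in y_\alpha(V_{\alpha,i})\cap y_\beta(V_{\beta,j})$; I want to show the transition map $\psi_{\beta,j}\circ(y_\beta|_{V_{\beta,j}})^{-1}\circ(y_\alpha|_{V_{\alpha,i}})\circ\psi_{\alpha,i}^{-1}$ is smooth near $\psi_{\alpha,i}$ of the relevant point. Writing $p=y_\alpha(x)=y_\beta(x')$, the definition of $\sim$ together with freeness forces the existence of a unique $\phy\in G$ with $\phy(x)=x'$ (if $\alpha=\beta$ then $\phy$ is the identity near $x$; if $\alpha\neq\beta$ then $x\neq x'$ as elements of the disjoint union, so a nontrivial $\phy\in G$ must intervene). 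Crucially, $y_\beta\circ\phy = y_\alpha$ on the source set $U_\phy^s$: indeed $j_\beta(\phy(x))\sim j_\alpha(x)$ by definition of $\sim$, so $\pi\circ j_\beta\circ\phy = \pi\circ j_\alpha$ there. Therefore, on a small neighbourhood, $(y_\beta|_{V_{\beta,j}})^{-1}\circ y_\alpha = \phy$, and the transition map reduces to $\psi_{\beta,j}\circ\phy\circ\psi_{\alpha,i}^{-1}$, which is smooth because $\phy$ is a diffeomorphism between open subsets of smooth manifolds and $\psi_{\alpha,i},\psi_{\beta,j}$ are smooth charts. This gives a well\--defined smooth structure on $M$.

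By construction each $y_\alpha$ is then a diffeomorphism onto its open image (it pulls charts of $M$ back to charts of $M_\alpha$), hence an embedding; this establishes existence. For uniqueness, suppose $M$ carries two smooth structures $\mathcal{S}_1,\mathcal{S}_2$ for each of which all $y_\alpha$ are embeddings. Then the identity map $(M,\mathcal{S}_1)\to(M,\mathcal{S}_2)$ is, locally on each open set $y_\alpha(M_\alpha)$, equal to $y_\alpha\circ y_\alpha^{-1}$ where the first $y_\alpha$ is an embedding into $(M,\mathcal{S}_2)$ and the second is the inverse of an embedding from $M_\alpha$ into $(M,\mathcal{S}_1)$; hence it is smooth with smooth inverse. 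Since the $y_\alpha(M_\alpha)$ cover $M$, the identity is a diffeomorphism, so $\mathcal{S}_1=\mathcal{S}_2$.

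I expect the main obstacle to be the bookkeeping in the compatibility step: one must be careful that the element $\phy\in G$ relating $x$ and $x'$ genuinely gives a \emph{local} diffeomorphism identifying the two charts, and that $G$ being a groupoid (closed under composition and inverses) is what makes the local pieces patch consistently — in particular one should note that different choices of $\phy\in G$ with $\phy(x)=x'$ agree on a neighbourhood of $x$ (they differ by a nontrivial element of $G$ fixing $x$, which, since its source and target both meet the point, is excluded when $\alpha=\beta$ and otherwise is irrelevant because we only need \emph{some} such $\phy$, and any two agree near $x$ as both equal $(y_\beta|_{V_{\beta,j}})^{-1}\circ y_\alpha$ there). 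No Hausdorff or paracompactness hypotheses are needed for any of this, consistent with the standing conventions of the section.
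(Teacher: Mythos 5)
Your proof is correct and follows essentially the same route as the paper: push forward an atlas on each $M_\alpha$ via $y_\alpha$ (using Lemma~\ref{lemm:open3} to know $y_\alpha$ is an open injection), identify the transition maps with glueing maps $\phy\in G$ using the definition of $\sim$ and freeness, and then obtain uniqueness by noting that the embedding requirement forces exactly these pushed-forward charts. The only cosmetic difference is that you carry out the compatibility check pointwise (find the $\phy$ relating $x$ to $x'$ and observe $y_\beta\circ\phy=y_\alpha$ on $U_\phy^s$), whereas the paper works set-theoretically via Lemma~\ref{lemm:preimage3} by matching up the terms in the two expansions of $\pi^{-1}(y_\alpha(V_\alpha))=\pi^{-1}(y_\beta(V_\beta))$; both paths land on the same transition map $g_\alpha\circ\phy^{-1}\circ g_\beta^{-1}$.
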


  \begin{proof}
    Let $U\subset M_\alpha$ be open and let $g:U\fleche\RM^n$ be a
    homeomorphism. By Lemma~\ref{lemm:open3}, $y_\alpha$ is a
    homeomorphism onto its image. Let $\tilde{U}=y_\alpha(U)$ and
    $\tilde{g}=g\circ ((y_\alpha)|_{U})^{-1}$. Then $\tilde{U}$ is an
    open subset of $M$ and $\tilde{g}:\tilde{U}\fleche\RM^n$ is a
    homeomorphism. This shows that any chart of $M_\alpha$ descends
    onto a chart of $M$. Obviously the union of a family of open
    covers of $M_\alpha$ for all $\alpha\in A$ descends to an open
    cover of $M$. In order to get an atlas on $M$, it remains to check
    the compatibility condition when an open set $\tilde{V}_\alpha$
    coming from an atlas of $M_\alpha$ intersects an open set
    $\tilde{V}_{\beta}$ coming from an atlas of $M_{\beta}$. Thus, let
    $(V_\alpha,g_\alpha)$, $V_\alpha\subset M_\alpha$ and
    $(V_{\beta},g_{\beta})$, $V_{\beta}\subset M_{\beta}$ be local
    charts such that $y_\alpha(V_\alpha)=y_{\beta}(V_{\beta})$ and
    $\alpha\neq\beta$. Now consider the formula, given by
    Lemma~\ref{lemm:preimage3}~:

\[
j_\alpha(V_\alpha)\cup \left(\bigcup_{\phy\in G}
  j_{\alpha(\phy)}(\phy(V_\alpha\cap U_\phy^s))\right) =
j_\beta(V_\beta)\cup \left(\bigcup_{\phy\in G}
  j_{\alpha(\phy)}(\phy(V_\beta\cap U_\phy^s))\right).
\]
Because $\mathcal{G}$ is free, any $\phy$ whose source set intersects
$V_\alpha$ \emph{and} with $\alpha(\phy)=\alpha$ must be the
identity. Hence, in the lefthand side one can ommit all $\phy$'s such
that $\alpha(\phy)=\alpha$. For the same reason, one can assume that
all $\alpha(\phy)$'s are pairwise different. Of course the analogue
observation holds for the righthand side.  Hence we can equate terms
in the unions (up to permutation). In particular there must exist some
$\phy$ with $\alpha(\phy)=\beta$ and $ j_\beta(\phy(V_\alpha\cap
U_\phy^s)) = j_\beta(V_\beta).  $ Since $j_\beta$ is injective,
$\phy(V_\alpha\cap U_\phy^s)=V_\beta$. Let $x\in V_\beta$ and
$x'=\phy^{-1}(x)\in V_\alpha$. Then $y_\alpha(x')=y_\beta(x)$,
\emph{i.e.}  $x'=y_\alpha^{-1}\circ y_\beta(x)$. Thus $
((y_\alpha)|_{V_\alpha})^{-1}\circ (y_{\beta})|_{ V_{\beta}} =
(\phy^{-1})|_{V_{\beta}}.  $ Hence the transition map for the charts
$\tilde{g}_u:=g_u\circ ((y_u)|_{V_u})^{-1}$ ($u=\alpha,\beta$) is
equal to
\begin{equation}
  \tilde{g}_\alpha\circ {\tilde{g}_{\beta}}^{-1} = g_\alpha \circ \left(((y_\alpha)|_{
      V_\alpha})^{-1}\circ (y_{\beta})|_{V_{\beta}}\right) \circ g_{\beta}^{-1} =
  g_\alpha\circ \phy^{-1}\circ g_{\beta}^{-1},
  \label{equ:transition3}
\end{equation}
which is indeed a composition of local diffeomorphisms. Thus $M$ has a
natural smooth structure.

Consider now the map $y_\alpha:M_\alpha\hookrightarrow M$. Read in a
chart $(\tilde{V}_\alpha,\tilde{g}_\alpha)$ of $M$, with
$\tilde{g}_\alpha:=g_\alpha\circ ((y_\alpha)|_{V_\alpha})^{-1}$, for
some chart $(V_\alpha,g_\alpha)$ on $M_\alpha$, it becomes $
\tilde{g}_\alpha\circ y_\alpha = (g_\alpha)|_{V_\alpha}, $ which is a
local diffeomorphism. Since we already know that $y_\alpha$ is a
homeomorphism onto its image, it is an embedding.

Conversely, if $y_\alpha$, $\alpha\in A$ have to be embeddings for
some smooth structure on $M$, then any local chart on $M_\alpha$ is
sent by $y_\alpha$ to a local chart on $M$. Thus, necessarily, we
obtain the same charts on $M$ as the ones we've just constructed.
\end{proof}

\begin{remark}
  The smooth manifold $M$ given in Lemma \ref{lemm:smooth3} \emph{is
    not necessarily a Hausdorff space}.  The definition of manifold in
  Bourbaki \cite{bourbaki2} does not require $M$ to be a Hausdorff
  topological space, or a paracomact space. These are, however,
  conditions most frequently required.  It follows from Bourbaki
  \cite{bourbaki2} that $M$ is Hausdorff if, and only if, for any two
  smooth charts $\varphi \colon U \subset M \to \R^n$, $\psi \colon V
  \subset M \to \R^n$ constructed as in the proof of Lemma
  \ref{lemm:smooth3}, we have that the graph of $\psi\circ\phy^{-1}
  \colon \phy(U\cap V) \to \psi(U\cap V)$ is closed in $\varphi(U)
  \times \psi(V) \subset \R^n \times \R^n$.
\end{remark}

\subsection{Symplectic glueing}

Unlike in the previous two sections, we shall be assuming that the
$M_{\alpha}$, $\alpha \in A$, are Haudorff, paracompact smooth
manifolds. Moreover, we will be assuming that there exist continuous,
proper maps $F_{\alpha} \colon M \to \R^n$ which can be glued together
to give rise to a proper map $F \colon M \to \R$. With the aid of $F$
we will show that the Hausdorff and paracompactness properties of the
$M_{\alpha}$ are inherited by $M$.

\begin{lemma}
  \label{lemm:symplectic3}
  If for each $\alpha\in A$, $M_\alpha$ is symplectic with symplectic
  form $\omega_\alpha$, and if all $\phy\in\mathcal{G}$ are
  symplectomorphisms (and $\mathcal{G}$ is free) then there exists a
  unique symplectic structure $\omega$ on $M$ such that $y_\alpha^*
  \omega = \omega_i, \quad \alpha\in A$.
\end{lemma}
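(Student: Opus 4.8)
The plan is to transport the symplectic forms $\omega_\alpha$ through the embeddings $y_\alpha$ and check that the resulting local 2-forms on $M$ agree on overlaps, so that they patch to a global form. Concretely, since by Lemma~\ref{lemm:smooth3} each $y_\alpha\colon M_\alpha\hookrightarrow M$ is an open embedding and the images $y_\alpha(M_\alpha)$ cover $M$, I would define $\omega$ on each open set $y_\alpha(M_\alpha)$ as $(y_\alpha^{-1})^*\omega_\alpha$ — i.e. the pushforward of $\omega_\alpha$ under the diffeomorphism $y_\alpha$ onto its image. This immediately forces $y_\alpha^*\omega=\omega_\alpha$, which is the asserted property, and uniqueness is then automatic: any $\omega$ satisfying $y_\alpha^*\omega=\omega_\alpha$ for all $\alpha$ must restrict to $(y_\alpha^{-1})^*\omega_\alpha$ on each chart domain $y_\alpha(M_\alpha)$, and these domains cover $M$.

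The only thing requiring argument is \textbf{well-definedness} on overlaps $y_\alpha(M_\alpha)\cap y_\beta(M_\beta)$. I would recycle the local analysis from the proof of Lemma~\ref{lemm:smooth3}: a point $p$ in the overlap has preimages $x\in M_\alpha$ and $x'\in M_\beta$ with $y_\alpha(x)=p=y_\beta(x')$, and freeness of $\mathcal G$ together with Lemma~\ref{lemm:preimage3} shows that either $\alpha=\beta$ (and then $x=x'$) or there is a $\phy\in G$ with $\alpha(\phy)=\beta$ and $x'=\phy(x)$ near $p$; moreover the identity $((y_\alpha)|_{V_\alpha})^{-1}\circ(y_\beta)|_{V_\beta}=(\phy^{-1})|_{V_\beta}$ established there shows that on the overlap the transition is (the inverse of) a glueing map $\phy$. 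Since every $\phy\in\mathcal G$ — hence every element of the generated groupoid $G$ — is a symplectomorphism, $\phy^*\omega_\alpha=\omega_\beta$, and therefore $(y_\beta^{-1})^*\omega_\beta=(y_\beta^{-1})^*\phy^*\omega_\alpha=(y_\alpha^{-1})^*\omega_\alpha$ on $y_\alpha(M_\alpha)\cap y_\beta(M_\beta)$. Thus the locally defined forms agree on overlaps and glue to a global smooth 2-form $\omega$ on $M$.

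It remains to note that $\omega$ is a symplectic form, i.e. closed and nondegenerate. Both are local conditions, checkable in the charts $y_\alpha(M_\alpha)$: there $\omega=(y_\alpha^{-1})^*\omega_\alpha$ is the pullback of the symplectic form $\omega_\alpha$ under a diffeomorphism, hence closed ($\DD\omega=(y_\alpha^{-1})^*\DD\omega_\alpha=0$) and nondegenerate at each point. Since the $y_\alpha(M_\alpha)$ cover $M$, $\omega$ is symplectic on all of $M$.

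\textbf{The main obstacle} — if one can call it that — is purely bookkeeping: making sure that the overlap identification really is governed by a single groupoid element $\phy$ and that one is entitled to use $\phy^*\omega_\alpha=\omega_\beta$ for composites as well as generators. This is exactly the content already extracted in the proof of Lemma~\ref{lemm:smooth3} (freeness kills all the ``parasitic'' terms in the Lemma~\ref{lemm:preimage3} decomposition and pins the transition map down to $\phy^{-1}$), so the symplectic statement follows from the smooth one with essentially no new work; one simply replaces ``composition of local diffeomorphisms'' in \eqref{equ:transition3} by ``composition of symplectomorphisms.''
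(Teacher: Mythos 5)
Your proposal is correct and follows essentially the same route as the paper: push $\omega_\alpha$ forward through the open embeddings $y_\alpha$, and use the fact (extracted from the proof of Lemma~\ref{lemm:smooth3}) that the transition maps on overlaps are groupoid elements $\phy\in G$, hence symplectomorphisms, to get agreement on overlaps; closedness and nondegeneracy then hold locally. The paper states this more tersely as three bullet facts, but the content is identical, including your remark that one needs symplectomorphy for composites in $G$ and not just for the generators in $\mathcal{G}$.
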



  
  \begin{proof}
    Because (1) all $y_\alpha$'s are embeddings, (2)
    $\bigcup_{\alpha\in A} y_\alpha(M_\alpha)=M$, (3) when
    $y_\alpha(M_\alpha)$ intersects $y_\beta(M_\beta)$,
    $\alpha\neq\beta$, then $y_{\beta}^{-1}\circ (y_\alpha)=\phy$ for
    some $\phy\in G$ with $\phy^*\omega_{\beta}=\omega_\alpha$, the
    formula $y_\alpha^* \omega = \omega_\alpha$ defines a unique
    symplectic form $\omega$ on $M$.
  \end{proof}

  We can finally apply this technique in our case~:
  \begin{prop}
    \label{prop:fibration0}
    Let $(M_\alpha)_{\alpha\in A}$ be a collection of symplectic
    manifolds, each equipped with a map
    $F_\alpha:M_\alpha\fleche\RM^n$. For any $\alpha,\beta\in A$ let
    $D_{\alpha\beta}:=F_\alpha(M_\alpha)\cap F_{\beta}(M_{\beta})$ and
    assume
    \begin{enumerate}
    \item $U_\alpha:=F_\alpha^{-1}(D_{\alpha\beta})$ and
      $U_{\beta}:=F_{\beta}^{-1}(D_{\alpha\beta})$ are open.

    \item $\phy_{\alpha\beta}:U_\alpha\fleche U_{\beta}$ is a
      symplectomorphism such that $ \phy_{\alpha\beta}^* F_{\beta} =
      F_\alpha$.

    \item When $D_{\alpha\beta\gamma}:=F_\alpha(M_\alpha)\cap
      F_{\beta}(M_{\beta})\cap F_\gamma(M_\gamma)\neq \emptyset$, $
      \phy_{\beta\gamma} \circ \phy_{\alpha\beta}= \phy_{\alpha\gamma}
      \, \text{(restricted to }
      F_\alpha^{-1}(D_{\alpha\beta\gamma})\text{)}.  $
    \end{enumerate}
    Then the smooth manifold $M$ obtained by glueing the collection
    $(M_\alpha)_{\alpha\in A}$ along the set of all
    $(\phy_{\alpha\beta})$ is symplectic, and there exists a unique
    map $F:M\fleche \RM^n$ verifying $F_\alpha = F\circ y_\alpha$,
    where $y_\alpha:M_\alpha\hookrightarrow M$, $\alpha\in A$ are the
    natural symplectic embeddings.
  \end{prop}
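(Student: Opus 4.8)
The plan is to recognize the family $(\phy_{\alpha\beta})$ as a \emph{free} collection of symplectic glueing maps, invoke Lemmas~\ref{lemm:smooth3} and~\ref{lemm:symplectic3} to equip $M$ with a smooth structure and a symplectic form, and then construct $F$ by hand. So I would first set $\mathcal{G}:=\{\phy_{\alpha\beta}\mid D_{\alpha\beta}\neq\emptyset\}$: by hypotheses~(1) and~(2) each $\phy_{\alpha\beta}$ is a symplectomorphism between the open sets $U_\alpha\subset M_\alpha$ and $U_\beta\subset M_\beta$, hence a glueing map for $(M_\alpha)_{\alpha\in A}$, and I would let $G$ be the glueing groupoid it generates. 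Everything then hinges on checking the single hypothesis ``$\mathcal{G}$ free'' needed to apply the earlier lemmas.

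The crucial preliminary step is to extract freeness of $\mathcal{G}$ from the cocycle hypothesis~(3). Taking $\beta=\gamma=\alpha$ in~(3) gives $\phy_{\alpha\alpha}\circ\phy_{\alpha\alpha}=\phy_{\alpha\alpha}$ on $M_\alpha$, so $\phy_{\alpha\alpha}=\op{id}_{M_\alpha}$ since it is a bijection; taking $\gamma=\alpha$ then gives $\phy_{\beta\alpha}\circ\phy_{\alpha\beta}=\op{id}$ on $U_\alpha$, whence $\phy_{\beta\alpha}=\phy_{\alpha\beta}^{-1}$. Next, any element of $G$ is a restriction of a finite composition $\phy_{\alpha_k\alpha_{k+1}}\circ\cdots\circ\phy_{\alpha_1\alpha_2}$ (composability of the factors forces the intermediate indices to match up), and I would argue by induction on $k$, using~(2) to transport the value of $F_{\alpha_1}$ along the chain, that on the open set where this composition is defined each partial composite lands in the relevant triple preimage $F_{\alpha_1}^{-1}(D_{\alpha_1\alpha_i\alpha_{i+1}})$; then~(3) applies at each stage and collapses the whole composition to a restriction of $\phy_{\alpha_1\alpha_{k+1}}$. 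In particular, if an element of $G$ has source and target in the same $M_\alpha$, then $\alpha_1=\alpha_{k+1}=\alpha$ and it is a restriction of $\phy_{\alpha\alpha}=\op{id}$, hence trivial; so $\mathcal{G}$ is free.

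Once freeness is established, Lemma~\ref{lemm:smooth3} gives $M$ its unique smooth structure making the maps $y_\alpha:M_\alpha\hookrightarrow M$ embeddings, and Lemma~\ref{lemm:symplectic3} gives the unique symplectic form $\omega$ with $y_\alpha^*\omega=\omega_\alpha$ (closedness and nondegeneracy are local, so they are verified on each chart $y_\alpha(M_\alpha)$, and these cover $M$). It then remains to build $F$. I would set $F(p):=F_\alpha(x)$ whenever $p=y_\alpha(x)$; if also $p=y_\beta(x')$ with $(\beta,x')\neq(\alpha,x)$, then freeness forces $\alpha\neq\beta$ and produces some $\phy\in G$ with $\phy(x)=x'$, necessarily a restriction of $\phy_{\alpha\beta}$ by the collapsing argument above, so $F_\beta(x')=F_\beta(\phy_{\alpha\beta}(x))=F_\alpha(x)$ by~(2); hence $F$ is well defined and satisfies $F\circ y_\alpha=F_\alpha$. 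It is unique because $M=\bigcup_{\alpha\in A}y_\alpha(M_\alpha)$, and on each open piece $y_\alpha(M_\alpha)$ one has $F=F_\alpha\circ(y_\alpha|_{M_\alpha})^{-1}$, so $F$ also inherits the regularity (continuity, and smoothness where the $F_\alpha$ are smooth) of the $F_\alpha$.

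I expect the main obstacle to be the freeness verification: one has to check carefully that, as the chain of glueing maps is collapsed, every triple intersection $D_{\alpha_1\alpha_i\alpha_{i+1}}$ that occurs is nonempty exactly on the domain under consideration --- this is where hypothesis~(2), $\phy_{\alpha_j\alpha_{j+1}}^*F_{\alpha_{j+1}}=F_{\alpha_j}$, does the real work, by identifying the $F$-values along the chain --- so that the cocycle identity~(3) is legitimately applicable at each step. Everything else reduces to bookkeeping and to the three glueing lemmas already proved.
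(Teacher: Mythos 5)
Your proof is correct and takes essentially the same route as the paper: the paper's entire argument is the one-line observation that the cocycle condition implies the glueing groupoid is free, after which Lemmas~\ref{lemm:smooth3} and~\ref{lemm:symplectic3} apply and the construction of $F$ is routine. Your proposal usefully spells out what the paper leaves implicit, namely that the cocycle relation forces $\phy_{\alpha\alpha}=\mathrm{id}$ and $\phy_{\beta\alpha}=\phy_{\alpha\beta}^{-1}$, the inductive collapse of a chain of glueing maps to a single $\phy_{\alpha_1\alpha_{k+1}}$ (where condition~(2) identifies the natural domain of the composite with $F_{\alpha_1}^{-1}(D_{\alpha_1\alpha_i\alpha_{i+1}})$ so that~(3) is applicable), and the well-definedness of $F$.
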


\begin{proof}
  The third assumption (cocycle condition) implies that the
  corresponding glueing groupoid is free.
\end{proof}

\begin{theorem}[Symplectic Glueing]
  \label{theo:glueing}
  Let $(M_\alpha)_{\alpha\in A}$ be a collection of symplectic
  manifolds, each equipped with a continuous, proper map
  $F_\alpha:M_\alpha\fleche V_\alpha\subset\RM^n$, where $V_\alpha$ is
  open. For any $\alpha,\beta\in A$ let $D_{\alpha\beta}:=V_\alpha\cap
  V_\beta$ and assume
  \begin{enumerate}
  \item
    $\phy_{\alpha\beta} : F_{\alpha}^{-1}(D_{\alpha \beta})\fleche
    F_{\beta}^{-1}(D_{\alpha \beta})$ is a symplectomorphism such that
    $ \phy_{\alpha\beta}^* F_{\beta} = F_\alpha$.

  \item When $V_\alpha\cap V_{\beta}\cap V_\gamma\neq \emptyset$,
    $\phy_{\beta\gamma} \circ \phy_{\alpha\beta}=
    \phy_{\alpha\gamma}$.
  \end{enumerate}
  Then the smooth manifold $M$ obtained by glueing the collection
  $(M_\alpha)_{\alpha\in A}$ along the set of all
  $(\phy_{\alpha\beta})$ is Hausdorff, paracompact (in other words, a
  smooth manifold in the usual sense) and symplectic, and there exists
  a unique continuous, proper map $ F:M\fleche \bigcup_{\alpha\in A}
  V_\alpha\subset \RM^n $ verifying $F_\alpha = F\circ y_\alpha$,
  where $y_\alpha:M_\alpha\hookrightarrow M$, $\alpha\in A$, are the
  natural symplectic embeddings.
\end{theorem}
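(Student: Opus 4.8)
The plan is to deduce Theorem~\ref{theo:glueing} from Proposition~\ref{prop:fibration0} together with Lemma~\ref{lemm:symplectic3}, by verifying that the present hypotheses are a special case of the ones there, and then to upgrade the purely topological conclusions (smoothness, existence of $F$) with the extra point-set properties (Hausdorff and paracompact) using properness of $F$. First I would observe that the data $F_\alpha\colon M_\alpha\to V_\alpha$ fit the setup of Proposition~\ref{prop:fibration0} with $n$ unchanged: setting $D_{\alpha\beta}:=V_\alpha\cap V_\beta$, hypothesis (1) here gives that $\phy_{\alpha\beta}$ is a symplectomorphism between the open sets $F_\alpha^{-1}(D_{\alpha\beta})$ and $F_\beta^{-1}(D_{\alpha\beta})$ intertwining the $F$'s, and hypothesis (2) here is exactly the cocycle condition (3) of that proposition (the case $V_\alpha\cap V_\beta\cap V_\gamma=\emptyset$ being vacuous). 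Openness of the $U_\alpha,U_\beta$ is automatic since $F_\alpha$ is continuous and $D_{\alpha\beta}$ is open. Hence Proposition~\ref{prop:fibration0} already delivers: a smooth manifold $M$ (in the Bourbaki sense, a priori neither Hausdorff nor paracompact) obtained by the glueing construction, a symplectic form $\omega$ with $y_\alpha^*\omega=\omega_\alpha$, and a unique map $F\colon M\to\R^n$ with $F_\alpha=F\circ y_\alpha$. Continuity of $F$ is immediate from the quotient topology since each $F\circ y_\alpha=F_\alpha$ is continuous and the $y_\alpha(M_\alpha)$ form an open cover; and $F(M)=\bigcup_\alpha V_\alpha$.

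The remaining work, and the genuine content of the theorem beyond Proposition~\ref{prop:fibration0}, is to show $M$ is Hausdorff, paracompact, and that $F$ is proper. For properness, let $K\subset\bigcup_\alpha V_\alpha\subset\R^n$ be compact; I want $F^{-1}(K)$ compact. The natural approach is to cover $K$ by finitely many $V_{\alpha_1},\dots,V_{\alpha_r}$ (possible when $A$ is such a cover admits a finite subcover of $K$ — here one should note that in the intended application $A$ is finite, or more carefully argue that $F^{-1}(K)$ is covered by the finitely many $y_{\alpha_j}(F_{\alpha_j}^{-1}(K\cap V_{\alpha_j}))$, each of which is the continuous image of the compact set $F_{\alpha_j}^{-1}(K\cap V_{\alpha_j})$, compact by properness of $F_{\alpha_j}$); hence $F^{-1}(K)$ is a finite union of compact sets, so compact. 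The subtlety I expect here is checking that $y_\alpha(F_\alpha^{-1}(K\cap V_\alpha))$ really exhausts $F^{-1}(K)$: given $[x,\alpha]\in F^{-1}(K)$ with $x\in M_\alpha$, one has $F_\alpha(x)=F([x,\alpha])\in K$, so $F_\alpha(x)\in K\cap V_\alpha$ and the point lies in $y_\alpha(F_\alpha^{-1}(K\cap V_\alpha))$, as needed.

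For the Hausdorff property, I would use the Remark following Lemma~\ref{lemm:smooth3}: two points of $M$ either lie in a single $y_\alpha(M_\alpha)$ (which is Hausdorff, being homeomorphic to the Hausdorff $M_\alpha$) and are separated there, or they are $y_\alpha(x)$ and $y_\beta(x')$ with $\alpha\neq\beta$ and $x'\neq\phy(x)$ for every $\phy\in G$. In the latter case, since $F$ is well-defined and continuous, if $F_\alpha(x)\neq F_\beta(x')$ one separates by preimages of disjoint neighbourhoods in $\R^n$; if $F_\alpha(x)=F_\beta(x')=:v$, then $v\in V_\alpha\cap V_\beta=D_{\alpha\beta}$, so $x\in F_\alpha^{-1}(D_{\alpha\beta})$, $x'\in F_\beta^{-1}(D_{\alpha\beta})$, and the glueing identification $\phy_{\alpha\beta}$ is defined on all of $F_\alpha^{-1}(D_{\alpha\beta})$; one then checks that $\phy_{\alpha\beta}(x)$ and $x'$ are distinct points of the Hausdorff manifold $M_\beta$ with the same $F_\beta$-value, separates them by open sets $W,W'$ in $M_\beta$, pulls $W$ back through $\phy_{\alpha\beta}$ to an open set around $x$, and pushes forward through $y_\alpha$ and $y_\beta$ respectively; freeness of $\mathcal{G}$ (equivalently the cocycle condition) ensures these images stay disjoint. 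This is the step I anticipate as the main obstacle, because one must carefully track which glueing maps can identify points and rule out "partial overlaps". Finally, paracompactness: each $M_\alpha$ is paracompact, hence (being also locally compact) $\sigma$-compact on each component, and $F$ proper plus $M$ locally compact Hausdorff with a countable-at-infinity base space lets one exhibit an exhaustion of $M$ by compact sets, whence $M$ is paracompact; alternatively, for the purposes of the paper one notes $A$ is finite and $M$ is a finite union of paracompact opens, which is paracompact once Hausdorff. I would present the finite-$A$ argument as the clean one and remark that the general case follows from properness of $F$.
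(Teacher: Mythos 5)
Your overall strategy — reduce to Proposition~\ref{prop:fibration0} and Lemma~\ref{lemm:symplectic3} for the smooth symplectic structure and the existence/uniqueness of $F$, then use properness of $F$ to establish Hausdorffness and paracompactness — is exactly the paper's strategy, and your Hausdorff argument (case split on whether $F(\bar{z})=F(\bar{w})$, then separate either via $\R^n$ or inside a single Hausdorff piece $M_\alpha$) is essentially the one in the paper. Your route to paracompactness through $\sigma$-compactness of $M$ (from $\sigma$-compactness of the open set $\bigcup_\alpha V_\alpha$ and properness of $F$) is a valid variant of the paper's argument, which instead invokes the perfect-map theorem (a closed continuous surjection with compact fibers onto a paracompact space has paracompact domain) after proving that proper maps between locally compact Hausdorff spaces are closed.

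However, your properness argument has a genuine gap. You cover $K$ by finitely many $V_{\alpha_1},\dots,V_{\alpha_r}$ and then claim each $F_{\alpha_j}^{-1}(K\cap V_{\alpha_j})$ is ``compact by properness of $F_{\alpha_j}$.'' This fails because $K\cap V_{\alpha_j}$ is the intersection of a compact set with an \emph{open} set, which is generally not compact; properness only controls preimages of compact subsets of $V_{\alpha_j}$. The paper avoids this by covering $K$ by finitely many small open balls $B_i$ whose \emph{closures} $\overline{B_i}$ are compact and contained in some $V_{\alpha(i)}$; then $F_{\alpha(i)}^{-1}(\overline{B_i})$ is genuinely compact, and the equality $y_{\alpha}(F_\alpha^{-1}(U))=F^{-1}(U)$ for $U\subset V_\alpha$ (which you did correctly identify as the other subtle point) finishes the argument. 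Note also that the theorem does not assume $A$ finite, and neither the paper's proof nor the corrected version of yours needs that; your side remark that ``a finite union of paracompact opens is paracompact once Hausdorff'' is not a safe general principle and is in any case not available here, so the $\sigma$-compactness route (or the paper's perfect-map route) is the one to use.
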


\begin{proof}
  The main statement is a corollary of
  Proposition~\ref{prop:fibration0} since $F^{-1}(V_\alpha\cap
  V_\beta)=F^{-1}(F(M_\alpha)\cap F(M_\beta))$ and thus the right
  handside is automatically open.
  
  Next we show that $M$ is Hausdorff.
  Let $\bar{z},\, \bar{w} \in M$, where $z,\, w \in \bigsqcup_{\alpha
    \in A} M_{\alpha}$.  There are two possibilities, that
  $F(\bar{z})=F(\bar{w})$ or that $F(\bar{z}) \neq F(\bar{w})$.  If
  $F(\bar{z})=F(\bar{w})$, then by definition of $F$
  (i.e. $F_{\alpha}=F \circ y_{\alpha}$), there exists $\alpha \in A$
  such that $z \in M_{\alpha}$ and $w \in M_{\alpha}$. Here we are
  viewing $M_{\alpha}$ as a subset of $\bigsqcup_{\alpha \in A}
  M_{\alpha}$, under the canonical identification $y_{\alpha}$.
  Because $M_{\alpha}$ is Hausdorff, there exist open sets $U_{z}
  \subset M_{\alpha}$, $U_{w} \subset M_{\alpha}$, with $z \in U_{z}$,
  $w \in U_{w}$ and $U_z \cap U_{w} =\emptyset$.  Because $M_{\alpha}$
  is open in $\bigsqcup_{\alpha \in A} M_{\alpha}$, by Lemma
  \ref{lemm:open3} we have that $\pi(U_z)$ and $\pi(U_{w})$ are open
  subsets of $M$. By construction, $\bar{z} \in \pi(U_z)$, $\bar{w}
  \in \pi(U_w)$.  It follows from the definition of $\pi$ as the
  quotient map $\bigsqcup_{\alpha \in A} M_{\alpha} \to
  M=\bigsqcup_{\alpha \in A} M_{\alpha}/\sim$, that $\pi(U_z) \cap
  \pi(U_{w})=\pi(U_z\cap U_{w})=\pi(\emptyset)=\emptyset$.

  Suppose on the other hand that $F(\bar{z})\neq F(\bar{w})$. Since
  $F(\bar{z}) \in \R^n$, $F(\bar{w}) \in \R^n$, and $\R^n$ is
  Hausdorff, there exist open sets $W_z$ and $W_{w}$ in $\R^n$ such
  that $F(\bar{z}) \in W_z$, $F(\bar{w}) \in W_{w}$ and $W_z \cap
  W_{w}=\emptyset$. Since $F$ is continuous, $F^{-1}(W_z)$ and
  $F^{-1}(W_w)$ are open. Also, by construction, $\bar{z} \in
  F^{-1}(W_z)$ and $\bar{w} \in F^{-1}(W_w)$.
  Of course $F^{-1}(W_z) \cap F^{-1}(W_w) = F^{-1}(W_z \cap
  W_{w})=\emptyset$.


  Let us show that $F$ is proper. Let $V:=\bigcup_{\alpha\in
    A}V_\alpha$. Let $K\subset V$ be compact in $V$. Since $K$ is
  compact, there exists a finite number of open balls $B_i$ of radius
  $\epsilon>0$ that cover $K$ and such that any $\overline{B_i}$ is
  included in some $V_{\alpha(i)}$, $\alpha(i)\in A$.  Let
  $\{O_\beta\}_{\beta\in B}$ be an open cover of $F^{-1}(K)$. For any
  $i$, the set $\overline{B_i}$ is compact in $V_{\alpha(i)}$; hence
  $F_\alpha^{-1}(\overline{B_i})$ is compact in $M_\alpha$. Thus
  $y_\alpha(F_\alpha^{-1}(\overline{B_i}))$ is compact in $M$, and
  hence there exists a finite subset $B_i\subset B$ such that
  $\bigcup_{\beta\in B_i} O_\beta \supset
  y_\alpha(F_\alpha^{-1}(\overline{B_i})).  $ We can conclude, using
  the fact that
  \begin{equation}
    \label{equ:equality}
    \textup{for all}\,\, U\subset V_\alpha, \quad y_\alpha(F_\alpha^{-1}(U))=F^{-1}(U),
  \end{equation}
  that $ F^{-1}(K)\subset \bigcup_{i}\bigcup_{\beta\in B_i} O_\beta, $
  which shows that $F^{-1}(K)$ is indeed compact.

  To complete the properness proof we must show that
  equality~(\ref{equ:equality}) holds. Indeed, the inclusion of sets $
  y_\alpha(F_\alpha^{-1}(U))\subset F^{-1}(U) $ follows directly from
  the equality $F\circ y_\alpha=F_\alpha$. For the converse, we come
  back to the definition of $M$. If $\bar{z}\in F^{-1}(U)$ there must
  exist some $z_\beta\in M_\beta$ such that $\pi(z_\beta)=\bar{z}$
  ($\pi$ is the quotient map of Lemma~\ref{lemm:preimage3}). Thus
  $F_\beta(z_\beta)=F(\bar{z})$. This means that $V_\alpha\cap
  V_\beta$ is not empty, and there is a symplectomorphism
  $\phy_{\beta\alpha}$ such that
  $z_\alpha:=\phy_{\beta\alpha}(z_\beta)\in M_\alpha$. This implies
  $\pi(z_\alpha)=\pi(z_\beta)=\bar{z}$. Thus
  $F(\bar{z})=F_\alpha(z_\alpha)$ which proves the inclusion $
  F^{-1}(U) \subset y_\alpha(F_\alpha^{-1}(U)).  $

  We have left to show that $M$ is a paracompact space.  We have
  previously shown that $F\colon M \to V$ is a proper map, so in
  particular, the fibers of $F$ are compact.  On the other hand, for
  each $\alpha \in A$, $M_{\alpha}$ is a manifold in the usual sense,
  and hence it is locally compact, which then implies that
  $\bigsqcup_{\alpha \in A} M_{\alpha}$ is locally compact. We claim
  that $M$ is locally compact. Indeed, let $\bar{z} \in M$, where $z
  \in M_{\alpha}$ for some $\alpha$. Because $M_{\alpha}$ is locally
  compact, there is a compact neighborhood $K_z$ of $z$ in
  $M_{\alpha}$ containing an open set $U_z$, with $z \in U_z$. Since
  $\pi$ is continuous, $\pi(K_z)$ is compact. Since $\pi$ is open,
  $\pi(U_z)$ is open, and hence $\pi(K_z)$ is a compact neighborhood
  of $\bar{z}$, and we have shown that $M$ is locally compact.

  On the other hand, a continuous, proper map between locally compact
  Hausdorff spaces is closed\footnote{Let $f:X\fleche Y$ be such a
    map. Let $A$ be closed and let $y\in \overline{F(A)}$. Since $Y$
    is Hausdorff $\{y\}$ is the intersection of closed neighborhoods
    of $y$. Since $Y$ is locally compact one can assume that one of
    these neighborhood is compact. Since $f$ is continuous and proper,
    $A\cap{f^{-1}(y)}$ is a decreasing intersection of nonempty closed
    sets in a compact, and hence is not empty. Hence $y\in f(A)$ and
    $f(A)$ is closed.}
  see \cite[Prop.~3,\,p.~16]{daverman}.  We have already shown that
  $M$ is Hausdorff and locally compact.  Hence, since $F \colon M \to
  V$ is a proper map, it is a also a closed map.

  Next we deduce the paracompactness of $M$ from the following result
  \cite[20G,\,p.~153]{willard}, \cite[Th.~1]{brandsma}: if $f \colon X
  \to Y$ is a continuous, closed surjective mapping between
  topological spaces with compact fibers, and $Y$ is paracompact, then
  $X$ is paracompact as well. We can apply this result with $X$ equal
  to $M$, $Y$ equal to $F(M) \subset \R^n$, and $f$ equal to $F \colon
  M \to F(M)$.  The map $F \colon M \to F(M)$ is continuous, closed,
  and it has compact fibers, and $F(M)$, as a subset of $\RM^n$, is
  paracompact. Hence $M$ is paracompact.  This concludes the proof of
  the proposition.
\end{proof}

\section{Main Theorem: statement}

Again we equip the plane $\RM^2$ with its standard affine structure
with origin at $(0,0)$, and orientation.

\subsection{Delzant semitoric polygons}

Let $\Delta\in\op{Polyg}(\R^2)$ be a convex rational polygon in
$\RM^2$, as in Definition~\ref{defi:rational-convex-polygon}. Recall
that in our terminology, $\Delta$ is not necessarily compact. We call
a vertex a point in the boundary $\partial\Delta$ where the meeting
edges are not colinear. We shall make the following assumption
\begin{itemize}
\item[(a1)] The intersection of $\Delta$ with a vertical line is
  either compact or empty.
\end{itemize}
Consider such a vertical line intersecting the polytope. If the
intersection is not just a point, then it is a vertical segment. The
top end of this segment is said to belong to the \emph{top-boundary}
of $\Delta$.

To each vertex $z$ of $\Delta$ we associate a couple $\mathcal{B}_z$
of primitive integral vectors starting at $z$ and extending along the
direction of the edges meeting at $z$, in the order that makes them
oriented.  Then $\mathcal{B}_z$ defines a $\ZM$\--basis of
$\ZM^2\subset\RM^2$ when, viewed as a $2\times 2$ matrix, its
determinant is equal to $1$.

Let $s\in\NM^*$ and let $(\lambda_1,\dots,\lambda_s)\in\RM^s$ with
$\lambda_1<\cdots<\lambda_s$.  As before $\ell_{\lambda_j}$ is the
vertical line $\{x=\lambda_j\}$. We are interested only in the
following case
\begin{itemize}
\item[(a2)] The vertical lines $\ell_{\lambda_j}$, $j=1,\dots,s$
  intersect the top-boundary of $\Delta$.
\end{itemize}
Let $T$ be the linear transformation acting as the matrix
\[
T :=T^1=
\begin{pmatrix}
  1 & 0\\
  1 & 1
\end{pmatrix}.
\]
\begin{definition} \label{delzantweightedpolygon:def} Let $z$ be a
  vertex of the polygon $\Delta$ and $(u,\,v)=\mathcal{B}_z$. The
  point $z$ is called
  \begin{itemize}
  \item a \emph{Delzant corner} when there is no vertical line
    $\ell_{\lambda_j}$ through it and $\det(u,\,v)=1$,
  \item a \emph{hidden Delzant corner} when there is a vertical line
    $\ell_{\lambda_j}$ through it, it belongs to the top-boundary, and
    $\det(u,\,Tv)=1$.
  \item a \emph{fake corner} when there is a vertical line
    $\ell_{\lambda_j}$ through it, it belongs to the top-boundary, and
    $\det(u,\,Tv)=0$.
  \end{itemize}
\end{definition}

For the following lemma recall the definition of admissible weighted
polygon, c.f. Definition \ref{admissiblepolygon:def}.

\begin{lemma}
  \label{lemm:admissible}
  Let $\Delta$ be a convex rational polygon equipped with a set of
  vertical lines $(\ell_{\lambda_1},\dots,\ell_{\lambda_s})$, such
  that the assumptions (a1) and (a2) are satisfied. Suppose moreover
  that
  \begin{itemize}
  \item any point in the top-boundary that belongs to some vertical
    line $\ell_{\lambda_j}$ is either a hidden Delzant corner or a
    fake corner;
  \item any other vertex of $\Delta$ is a Delzant corner.
  \end{itemize}
  Then the triple
  \[
  \left(\Delta, (\ell_{\lambda_j})_{j=1}^s, (1,\dots,1)\right)
  \]
  is an admissible weighted polygon.
\end{lemma}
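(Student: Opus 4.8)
The plan is to reduce admissibility to a purely local convexity statement and then verify it corner by corner. Since every sign in the triple equals $+1$, acting by $(\epsilon'_j)_{j=1}^s\in G_s$ yields $\big(t_{\vec u}(\Delta),(\ell_{\lambda_j})_{j=1}^s,(\epsilon'_j)_{j=1}^s\big)$ with $\vec u=((1-\epsilon'_j)/2)_{j=1}^s$, and as $(\epsilon'_j)$ ranges over $G_s$ the vector $\vec u$ ranges over $\{0,1\}^s$. The maps $t^{n}_{\ell}$ attached to parallel vertical lines all preserve the $x$--coordinate and commute, so $t_{\vec u}$ has the explicit form $t_{\vec u}(x,y)=\big(x,\,y+\sum_{j:\lambda_j<x}u_j(x-\lambda_j)\big)$; it is thus a piecewise--affine homeomorphism of $\RM^2$ with inverse $t_{-\vec u}$, and $t_{\vec u}(\Delta)$ is closed and connected. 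Consequently the whole $G_s$--orbit of the triple consists of the triples $\big(t_{\vec u}(\Delta),(\ell_{\lambda_j})_j,(\epsilon'_j)_j\big)$ with $\vec u\in\{0,1\}^s$, and admissibility is exactly the assertion that $t_{\vec u}(\Delta)$ is convex for every $\vec u\in\{0,1\}^s$. To prove this I would invoke the classical fact that a closed, connected, locally convex subset of $\RM^2$ is convex, so the whole problem reduces to checking that $t_{\vec u}(\Delta)$ is locally convex at each of its boundary points.

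Local convexity is immediate at interior points and, by the affine argument below, at every boundary point possessing a neighbourhood disjoint from all the $\ell_{\lambda_j}$: there $t_{\vec u}$ agrees with a single affine map $T^k$, and an affine image of the locally convex set $\Delta$ is locally convex. The remaining points lie on some $\ell_{\lambda_j}$; by (a1) the set $\Delta\cap\ell_{\lambda_j}$ is a vertical segment $[p_-,p_+]$ (possibly a point). Near such a point the only kink of $t_{\vec u}$ comes from $\ell_{\lambda_j}$, the other lines contributing a global affine shear that one factors out; so it suffices to study $\sigma:=t^{u_j}_{\ell_{\lambda_j}}$, and one may assume $u_j=1$. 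A useful preliminary remark is that the hypotheses of the lemma forbid any vertex of $\Delta$ to lie on a vertical line unless it is on the top--boundary, since such a vertex would be neither a Delzant corner (a line passes through it) nor a top--boundary corner; hence $p_-$ — and more generally any point of $\Delta\cap\ell_{\lambda_j}$ other than $p_+$ — is not a vertex, and the edge through $p_-$ is non--vertical.

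With this in hand: at $p_-$ the local lower boundary $y=mx+b$ of $\Delta$ becomes, after $\sigma$, the lower boundary of slope $m$ left of $\ell_{\lambda_j}$ and $m+1$ to the right, which is still convex since $m\le m+1$; the interior points of $[p_-,p_+]$ lie in $\mathrm{int}(\Delta)$ and give nothing new; and when $\Delta\cap\ell_{\lambda_j}=\{p_+\}$ the line supports $\Delta$ and $\sigma$ acts affinely near $p_+$. The genuinely delicate point is the top vertex $p_+$, where $\mathcal{B}_{p_+}=(u,v)$ with $v$ the edge direction entering the half--plane to the right of $\ell_{\lambda_j}$ (the case of a vertical edge being excluded exactly as for $p_-$, so $u_1<0<v_1$). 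Locally $\Delta=\mathrm{cone}(u,v)$, whose intersection with $\ell_{\lambda_j}$ is the downward ray of direction $d=v_1u-u_1v=(0,-\det(u,v))$, and $\sigma$ fixes $d$ and carries $v$ to $Tv$, so locally $t_{\vec u}(\Delta)=\mathrm{cone}(u,d)\cup\mathrm{cone}(d,Tv)$. One computes $\det(u,d)=-u_1\det(u,v)>0$ and $\det(d,Tv)=v_1\det(u,v)>0$, so $d$ lies strictly between $u$ and $Tv$; hence this union is the angular sector from $u$ to $Tv$, which is a proper convex cone when $\det(u,Tv)=1$ (hidden Delzant) and a closed half--plane when $\det(u,Tv)=0$ (fake corner) — convex in either case. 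Intersecting with a small ball then shows $t_{\vec u}(\Delta)$ is locally convex at $p_+$.

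Assembling the cases, $t_{\vec u}(\Delta)$ is locally convex everywhere, hence convex, for every $\vec u\in\{0,1\}^s$, so the triple is admissible. The hard part — and the only place the hypotheses of the lemma are really used — is the analysis at $p_+$: the hidden--Delzant/fake dichotomy is precisely the condition guaranteeing that cutting $\Delta$ along $\ell_{\lambda_j}$ and shearing its right half upward turns the (non--Delzant) corner at $p_+$ into a Delzant corner or a flat edge, keeping the boundary convex, whereas along the rest of the boundary — in particular the bottom boundary — convexity is preserved automatically.
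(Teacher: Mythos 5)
Your proof is correct and follows essentially the same strategy as the paper's: reduce admissibility to local convexity at the two points where each $\ell_{\lambda_j}$ meets $\partial\Delta$, use $\det(u,Tv)\in\{0,1\}$ at the top intersection, and observe that the shear can only increase convexity at the bottom. The only genuine differences are matters of completeness rather than approach: you carry out the check for a general $\vec u\in\{0,1\}^s$ (factoring out the affine contribution of the other cut lines) rather than reducing tacitly to a single $t_{\vec e_j}$, you observe explicitly that no vertex can sit on a cut line below the top boundary (which streamlines the analysis at $p_-$), and you make explicit the cone decomposition $\sigma(\mathrm{cone}(u,v))=\mathrm{cone}(u,d)\cup\mathrm{cone}(d,Tv)$ with the determinant check showing $d$ lies between $u$ and $Tv$ — steps the paper compresses into ``which implies local convexity'' and ``a quick calculation shows.''
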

\begin{proof}
  We need to show that the convexity is preserved under the
  $G_s$\--action. This amounts to show that for any $j=1,\dots,s$, the
  polygon $t_{\vec{e_j}}(\Delta)$ is convex, where
  $(\vec{e_1},\,\dots,\,\vec{e_s})$ is the canonical basis of
  $\ZM^s$. Since $t_{\vec{e_j}}$ is affine on both half-spaces
  delimited by the vertical line $\ell_{\lambda_j}$, it suffices to
  show that $t_{\vec{e_j}}(\Delta)$ is locally convex near the points
  where $\ell_{\lambda_j}$ meets the boundary $\partial\Delta$.

  We let $\{a,z\}=\ell_{\lambda_j}\cap\partial\Delta$ and assume $z$
  lies on the top boundary. By assumption, $z$ is either a hidden
  Delzant corner or a fake corner. Let us consider the vectors
  $(u,\,v)=\mathcal{B}_z$. Because $z$ belongs to the top\--boundary, the
  vector $u$ must be directed to the lefthand side of $z$ and $v$ to
  the righthand side.  Since the transformation $t_{\vec{e_j}}$ acts
  only on the right half-space (and there it acts as $T$), the
  transformed edges of $t_{\vec{e_j}}(\Delta)$ at $z$ are directed
  along $(u,\,Tv)$. By assumption $\det(u,\,Tv)$ is either $0$ or $1$,
  which implies local convexity at $z$.

  Now consider the ``bottom boundary'' at the point $a$. By assumption
  the polygon is already locally convex at $a$ (which means
  $\det(u,v)\geq 0$), and a quick calculation shows that the action of
  $t_{\vec{e_j}}$ may only make it even ``more'' convex.
\end{proof}
It is easy to see that the properties of the lemma are preserved by
the $\mathcal{G}$-action. Thus we can state the following definition.

\begin{definition}
  \label{defi:delzant-semitoric}
  Let $[\Delta_{\scriptop{w}}]$ be a semitoric polygon as in
  Definition \ref{defi:semitoric-polygon}, and suppose that
 $\Delta_{\scriptop{w}}$ is a representative of the form
  $\Big(\Delta,\,(\ell_{\lambda_j})_{j=1}^s,\,
  (\epsilon_j)_{j=1}^s\Big)$ with all
  $\epsilon_{j}$'s equal to $+1$. Then $[\Delta_{\scriptop{w}}]$ is called a
  \emph{Delzant semitoric polygon} (\emph{of complexity $s$}) if the polygon
  $\Delta$ equipped with the vertical lines $\ell_{\lambda_j}$
  satisfies the hypothesis of Lemma~\ref{lemm:admissible}.
\end{definition}

We denote by $ \mathcal{D}\textup{Polyg}_s(\R^2) \subset
\mathcal{W}\textup{Polyg}_s(\R^2)/G_s \times \mathcal{G} $ the space
of Delzant semitoric polygons of complexity $s$, where $s < \infty$.

The following observation is a consequence of the construction of the
homeomorphism $f$ in Section \ref{semitoric:sec}.

\begin{lemma} \label{2:prop} The semitoric polygon in item (iii) of
  Definition \ref{listofinvariants} is a Delzant semitoric polygon.
\end{lemma}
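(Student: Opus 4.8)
The plan is to verify, item by item, that the convex rational polygon $\Delta$ produced by the construction recalled in Section~\ref{semitoric:sec}, together with its vertical lines $\ell_{\lambda_j}=\ell_j$ and the sign choice $\epsilon_j\equiv +1$, satisfies all the hypotheses of Lemma~\ref{lemm:admissible}; by Definition~\ref{defi:delzant-semitoric} this is exactly what it means for the semitoric polygon of item~(iii) of Definition~\ref{listofinvariants} to be a Delzant semitoric polygon. Since that invariant is by definition a full $(G_{m_f}\times\mathcal{G})$--orbit, I am free to run the construction of Section~\ref{semitoric:sec} with $\vec\epsilon=(1,\dots,1)$ from the start, so that all cuts are the upward half--lines $\ell_j^{+1}$; then $\lambda_j=J(c_j)$ and $\lambda_1<\dots<\lambda_{m_f}$, the latter because the $J(c_i)$ are pairwise distinct under the simplicity hypothesis of Section~\ref{taylor:sec}. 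The first assertion, (a1), is then immediate: $f$ preserves $J$, so $\Delta\cap\{x=\lambda\}=f\bigl(B\cap\{x=\lambda\}\bigr)$ with $B=F(M)$, and $B\cap\{x=\lambda\}=\{\lambda\}\times H(J^{-1}(\lambda))$ is compact or empty because $J$ is proper; hence so is $\Delta\cap\{x=\lambda\}$, and in particular the top--boundary of $\Delta$ is well defined.

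For (a2) and the location of the ``cut vertices'' I would recall from \cite{vungoc} (see Section~\ref{taylor:sec}) that $\partial B$ is the union of the images of the elliptic singularities of $F$ and that each $c_i$ lies in $\op{Int}(B)$. An elliptic--elliptic point $m$ is a critical point of $J$: after the linear change $B$ its Hessian is $(x^2+\xi^2,\,y^2+\eta^2)$, so $\op{d}_m\tilde F=0$, hence $\op{d}_mF=0$ and $\op{d}_mJ=0$. The simplicity hypothesis therefore forbids any elliptic--elliptic value from lying on a line $\ell_i$, so $\ell_i\cap\partial B$ consists of exactly two transversally--elliptic (hence smooth) boundary points, one below and one above $c_i$; call the latter $b_i$. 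Then $\ell_i\cap B$ is a compact vertical segment with $c_i$ in its interior, and since $f(c_i)=\mu(m_i)$ lies in $\op{Int}(\Delta)$ (Section~\ref{volumesection}), the segment $\Delta\cap\ell_{\lambda_i}=f(\ell_i\cap B)$ has positive length, with $f(c_i)$ in its interior and top endpoint $f(b_i)$ lying on the top--boundary of $\Delta$. This is (a2), and $f(b_i)$ is the unique point of the top--boundary on $\ell_{\lambda_i}$.

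It then remains to classify the vertices of $\Delta$. Off the cuts, $f$ is a local affine diffeomorphism, so it carries smooth boundary points of $B$ to smooth (non--vertex) boundary points of $\Delta$, and carries each elliptic--elliptic corner of $B$ to a vertex; near such a corner $F$ is, by the Atiyah--Guillemin--Sternberg--Delzant normal form, a toric momentum map, so its local image is a Delzant corner, and since no cut line passes through it this is a Delzant corner of $\Delta$ in the sense of Definition~\ref{delzantweightedpolygon:def}. At $f(b_i)$ the curve $\partial B$ is smooth, so its two edge directions at $b_i$ are opposite, while $f$ is affine with two different linear parts $L^{\mathrm{left}},L^{\mathrm{right}}$ on the two sides of the slit $\ell_i^{+1}$; because $f$ preserves $J$ and these parts differ by the focus--focus monodromy (a single Dehn twist), one gets $L^{\mathrm{right}}=L^{\mathrm{left}}T^{\pm1}$. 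Writing $(u,v)=\mathcal B_{f(b_i)}$ for the resulting pair of primitive integral edge vectors, a direct computation with the piecewise--affine map $t^{1}_{\ell_i}$ of \cite{vungoc} shows $\det(u,Tv)=0$, i.e.\ $f(b_i)$ is a fake corner (in particular, it is ``a hidden Delzant corner or a fake corner''). Hence every top--boundary vertex of $\Delta$ on some $\ell_{\lambda_j}$ is a fake corner and every other vertex is a Delzant corner, so $(\Delta,(\ell_{\lambda_j})_{j=1}^{m_f},(1,\dots,1))$ satisfies the hypotheses of Lemma~\ref{lemm:admissible} and is admissible; by Definition~\ref{defi:delzant-semitoric} the corresponding semitoric polygon --- the one of item~(iii) of Definition~\ref{listofinvariants} --- is a Delzant semitoric polygon.

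The step I expect to be the main obstacle is this last one, namely determining the linear transition $L^{\mathrm{right}}(L^{\mathrm{left}})^{-1}$ across the cut and thereby the value $\det(u,Tv)$ at $f(b_i)$: this forces one to combine the normal form at a transversally--elliptic boundary point with the explicit description of the affine monodromy around a focus--focus value and of the maps $t^{n}_{\ell}$ from \cite{vungoc}. Everything else is routine: properness of $J$ for (a1), the description of $\partial B$ together with $f(c_i)\in\op{Int}(\Delta)$ for (a2), and the toric normal form for the non--cut vertices.
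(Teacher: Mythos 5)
The paper offers no written proof of this lemma: it is stated as a direct consequence of the construction of the homeomorphism $f$ from \cite{vungoc} recalled in Section~\ref{semitoric:sec}, so your argument amounts to spelling out what the paper leaves implicit, in essentially the same spirit. Your outline --- take the representative with $\epsilon_j\equiv+1$ and verify the hypotheses of Lemma~\ref{lemm:admissible} directly for $\Delta=f(B)$ --- is the right one, and the individual steps are correct: (a1) follows from properness of $J$ together with connectedness of the fibres of $F$; (a2) from $c_i\in\op{Int}(B)$ and from $\partial B$ being the set of elliptic values; the non-cut vertices are Delzant corners via Eliasson's elliptic--elliptic normal form; and the simplicity hypothesis of Section~\ref{taylor:sec} indeed rules out elliptic--elliptic values on $\ell_i$, so the top endpoint $b_i$ of $\ell_i\cap B$ is a transversally--elliptic value and $\partial B$ is smooth there. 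Your observation that under simplicity $f(b_i)$ is therefore always a \emph{fake} corner, never a hidden Delzant corner, is a correct refinement not stated in the paper.

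The one step you flag as an obstacle --- getting $\det(u,Tv)=0$ at $f(b_i)$ --- does close, but you also need the convexity of $\Delta$ (proved in \cite{vungoc}), not just the fact that the monodromy is a single twist. Since $f$ preserves the first coordinate and is orientation preserving, all linear parts of $f$ lie in the commutative subgroup $\{T^a: a\in\ZM\}\subset\mathrm{SL}(2,\ZM)$. Writing $w=(w_1,w_2)$ for the primitive integral direction of $\partial B$ at $b_i$ with $w_1>0$, and $L^{\mathrm{l}},L^{\mathrm{r}}=L^{\mathrm{l}}T^{\pm1}$ for the linear parts of $f$ on the two sides of $\ell_i^{+1}$, one has $u=L^{\mathrm{l}}(-w)$, $v=L^{\mathrm{r}}w$, hence $\det(u,v)=\det(-w,T^{\pm1}w)=\mp w_1^2$. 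Convexity of $\Delta$ forces $\det(u,v)\geq 0$, which selects the monodromy $T^{-1}$ (in this convention); then, using that $T$ commutes with $L^{\mathrm{l}}$,
\[
\det(u,Tv)=\det\bigl(L^{\mathrm{l}}(-w),\,TL^{\mathrm{l}}T^{-1}w\bigr)=\det(L^{\mathrm{l}})\,\det(-w,w)=0,
\]
so $f(b_i)$ is indeed a fake corner. With that computation inserted, your proof is complete and correct.
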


In addition, note also that for any representative $\Delta$ of the
semitoric polygon $[\Delta_{\scriptop{w}}]$ in Definition \ref{listofinvariants},
and for each $i \in \{1,\ldots,\,m_f\}$ as in item (iv) of Definition
\ref{listofinvariants}, the height $h_i$ satisfies the inequality
\begin{eqnarray} \label{height:eq2} 0 < h_i < \textup{length}(\Delta
  \cap \ell_i).
\end{eqnarray}
This is because by (\ref{height:eq}) we have $h_i:=\mu(m_i)-\min_{s
  \in \ell_i \cap \Delta} \pi_2(s)$, where $\mu$ is a toric momentum
map for the system $(M, \, \omega,\,(J, \, H))$ corresponding to
$\Delta$.  Now, since $\mu(m_i)$ is a point in the interior of
$\Delta$, along the line $\ell_i$, expression (\ref{height:eq2})
follows.

\subsection{Main Theorem}

The following definition describes a collection of abstract
ingredients.  As we will see in the theorem following the definition,
each such a list of elements determines one, and one only one,
integrable system on a symplectic $4$\--manifold (which is not
necessarily a compact manifold, but we can characterize precisely when
it is in terms of one of the ingredients of the list). Moreover, this
integrable system is of semitoric type.

In the definition the term $\R[[X, \,Y]]$ refers to the algebra of
real formal power series in two variables, and $\R[[X,\,Y]]_ 0$ is the
subspace of such series with vanishing constant term, and first term
$\sigma_1\,X+\sigma_2\,Y$ with $\sigma_2 \in [0,\,2\, \pi)$.

\begin{definition} \label{listofingredients} A {\em semitoric list of
    ingredients} consists of the following items.
  \begin{itemize}
  \item[(i)] An integer number $0 \le m_f<\infty$.
  \item[(ii)] An $m_{f}$\--tuple of Taylor series
    $((S_i)^{\infty})_{i=1}^{m_{f}} \in (\R[[X,\,Y]]_ 0)^{m_f}$.
  \item[(iii)]

    A Delzant semitoric polygon $[\Delta_{\scriptop{w}}]$ of complexity
    $m_f$, as in Definition~\ref{defi:delzant-semitoric}.
    
        We denote the
    representative $\Delta_{\scriptop{w}}$ of $[\Delta_{\scriptop{w}}]$ by $\Big(\Delta,\,
    (\ell_{\lambda_j})_{j=1}^{m_f},\, (\epsilon_j)_{j=1}^{m_f}\Big)$.

  \item[(iv)] An $m_f$\--tuple of numbers $(h_j)_{j=1}^{m_{f}}$ such
    that $ 0 < h_j < \textup{length}(\Delta \cap \ell_i) $ for each $j
    \in \{1,\ldots,m_f\}$.
  \item[(v)] A $(G_{m_f} \times \mathcal{G})$\--orbit of
    $(\Delta_{\scriptop{w}},(k_j)_{j=1}^{m_f})$, where $(k_j)_{j=1}^{m_f}$ is a
    collection of integers.
  \end{itemize}
\end{definition}

Now we are ready to state the main theorem, the proof of which is
contructive and, in view of Section \ref{uniqueness:sec} and Lemma
\ref{2:prop}, gives a recipe to construct all semitoric integrable
systems up to isomorphisms.

\begin{theorem} \label{existence:thm} For each semitoric list of
  ingredients, as in Definition \ref{listofingredients}, there exists
  a $4$\--dimensional simple semitoric integrable system $(M,\, \omega,\,
  (J,\,H))$, such that the list of invariants (i)\--(v) of $(M,\,
  \omega,\, (J,\,H))$ as in Definition \ref{listofinvariants} is equal
  to this list of ingredients.  Moreover, $M$ is compact if and only
  the polygon in (iii) is compact.
\end{theorem}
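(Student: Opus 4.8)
The plan is to build the manifold $M$ by the symplectic glueing construction of Theorem \ref{theo:glueing}, using pieces dictated by the five ingredients. I would begin from the Delzant semitoric polygon $[\Delta_{\scriptop{w}}]$ in ingredient (iii). Cutting $\Delta$ along the vertical half-lines $\ell_{\lambda_j}$ (oriented by $\epsilon_j = +1$) and removing small discs around the would-be focus-focus values produces a region over which a genuine toric (Delzant-type) picture makes sense: the Delzant/hidden-Delzant/fake corner conditions of Definition \ref{delzantweightedpolygon:def} are exactly what is needed to run Delzant's construction locally and obtain a symplectic manifold with an $\mathbb{S}^1\times\mathbb{S}^1$-action whose momentum image is (an open dense piece of) $\Delta$. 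Over a neighbourhood of each point $c_i := \ell_{\lambda_i}\cap(\text{top boundary})$ — more precisely near the point at height $h_i$ above the bottom of $\Delta\cap\ell_i$ — I would instead glue in a standard local focus-focus model, a symplectic $4$-manifold carrying a proper $J$ and a map $F=(J,H)$ with one focus-focus singularity, whose associated Taylor series is prescribed to be $(S_i)^\infty$. The existence of such a local model with any prescribed Taylor invariant is the analytic input; it is obtained by taking the Eliasson normal form $q=(q_1,q_2)$ of \eqref{equ:cartan} and modifying the period functions $\tau_1,\tau_2$ (equivalently $\sigma_1,\sigma_2$) so that the integral $S_i$ of the closed form $\sigma$ realizes $(S_i)^\infty$; here ingredient (iv) controls the \emph{size} $h_i$ of the focus-focus fiber, i.e.\ where in $\Delta\cap\ell_i$ the singular value sits, which by the volume formula \eqref{height:eq} is exactly $h_i$, and the constraint $0<h_i<\textup{length}(\Delta\cap\ell_i)$ guarantees the gluing fits inside the polygon.

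Next I would assemble these pieces: the "toric" part over $\Delta$ minus the cut lines and discs, and the $m_f$ local focus-focus models, with transition symplectomorphisms over the overlapping strips that intertwine the maps $F$ as required in hypotheses (1)--(2) of Theorem \ref{theo:glueing}. The cocycle condition is automatic since the overlaps only involve consecutive pieces and the transition maps are forced to be the identity on $J$ (they have the form $(x,y)\mapsto(x,\cdot)$). Theorem \ref{theo:glueing} then delivers $M$ as a Hausdorff, paracompact symplectic $4$-manifold carrying a proper $F=(J,H)$; properness of $J$ follows since $J=\pi_1\circ F$ and the polygon satisfies (a1), so $J^{-1}(\text{compact})$ maps into a compact slab of $\Delta$. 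The twisting index ingredient (v) is incorporated by choosing, when gluing the $i$-th focus-focus model to the toric part, the transition map to be $T^{k_i}$ rather than the identity on the right half-space near $\ell_{\lambda_i}$; by the defining relation $\DD\mu = T^{k_i}\DD\nu$ this shifts the twisting index of the resulting system at $c_i$ precisely by $k_i$, and since the whole datum (iii)+(v) is an orbit under $G_{m_f}\times\mathcal{G}$ the construction is well-defined on orbits. Compactness of $M$ is equivalent to compactness of the base of this torus fibration with singular fibers, which (the fibers all being compact by properness) is equivalent to compactness of $\Delta$.

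Finally I would verify that the five invariants of the constructed $(M,\omega,(J,H))$ are the ones we started with. Invariant (i): the singular fibers are exactly the $m_f$ glued focus-focus models, so the number of focus-focus points is $m_f$. Invariant (ii): by construction of the local models the period functions, hence $\sigma$, hence $S_i$, have Taylor expansion $(S_i)^\infty$ — this uses the characterization of $(S_i)^\infty$ in \eqref{equ:sigma} and the definition preceding the Taylor series invariant. Invariant (iii): the semitoric polygon recovered by the cutting procedure of Section \ref{semitoric:sec} from the torus action on $M_r$ is, by Lemma \ref{lemm:mu} and \eqref{equ:mu-delta}, the closure of $\mu(M_r)=\Delta$, with the prescribed lines and (after the $G_{m_f}$-normalization) signs; Lemma \ref{2:prop} confirms it is a Delzant semitoric polygon. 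Invariant (iv): the volume $h_i = \mu(m_i)-\min_{s\in\ell_i\cap\Delta}\pi_2(s)$ is by construction the height at which we inserted the $i$-th model. Invariant (v): as noted, the $T^{k_i}$-twist in the gluing yields twisting index $k_i$, matched modulo the group action. The main obstacle is the analytic step: producing, for an arbitrary prescribed Taylor series $(S_i)^\infty\in\R[[X,Y]]_0$, an honest symplectic focus-focus model whose $S_i$-invariant realizes it and which \emph{glues smoothly and symplectically} to the semitoric/toric exterior — i.e.\ controlling not just the formal data but the actual germ near the singular fiber and its transition with the regular Lagrangian-fibration structure outside. Everything else is bookkeeping with the glueing theorem and the already-established formulas for the invariants.
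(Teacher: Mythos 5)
Your strategy — cover $\Delta$ by small sets, place Delzant-type local models (regular, transversally elliptic, elliptic-elliptic) away from the cuts, insert Vũ Ngọc's semi-global focus-focus models with prescribed Taylor invariants at the nodes at height $h_i$, twist by $T^{k_i}$ when attaching them, glue via Theorem~\ref{theo:glueing}, and read off the invariants from the construction — is the same as the paper's. But there is a genuine gap in the passage from the glued object to an integrable system. Theorem~\ref{theo:glueing} only delivers a proper \emph{continuous} map; it makes no smoothness assertion, and here it cannot, because the focus-focus piece enters the glueing with $F_\alpha = R_\alpha\circ\mu_i$ where $\mu_i = g_i\circ F_i$ and $g_i$ is merely a homeomorphism that is smooth only away from the cut half-line. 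Consequently the glued map $\mu$ is a \emph{generalized} toric momentum map that fails to be differentiable along the cuts $\ell_i^+$, so at this point $(M,\omega,\mu)$ is not yet a smooth integrable system. Your sentence about the ``main obstacle'' points vaguely at the germ near the singular fiber, but the actual obstacle is not in producing the local focus-focus model (that is exactly the classification result of~\cite{vungoc0}, used off the shelf); it is in recovering global smoothness.

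The paper's ``fourth and final stage'' handles this by building a global homeomorphism $\varphi$ of $\R^2$, equal to the identity outside small strips around the cuts, which near each node agrees with $h_i = g_i^{-1}$ on a half-strip below $c_i$, with $t_0\circ g_i^{-1}\circ t_0^{-1}$ on the other side, and which interpolates smoothly across; this requires a careful Whitney-type extension of $\eta_i$ to a $\tilde\eta_i$ keeping $\partial_y\tilde\eta_i>0$ (Claim~\ref{1:claim}), and one then shows $\tilde F := \varphi\circ\mu$ is smooth and proper (Claim~\ref{5.2:claim}) by checking three regions: near $c_i$ (where $\tilde F$ literally becomes $F_i$), away from $\ell_i$, and along $\ell_i$ away from $c_i$. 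Without this step your $F$ is not a momentum map of an integrable system and the statement of the theorem is not reached. A secondary, smaller issue: the cocycle condition is not ``automatic'' from the fact that transitions preserve the $J$-coordinate; one still has to exhibit the transition symplectomorphisms for each pair of model types (five cases in the paper: (\ref{equ:rr}), (\ref{equ:re}), (\ref{equ:ree}), (\ref{equ:ee}), (\ref{equ:eee})) and then check the cocycle identity — it does hold, but by direct verification, not for free.
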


\subsection{Classification of $4$\--dimensional semitoric systems}

Consequently, putting Theorem \ref{existence:thm} together with
Theorem \ref{mainthm} proved in \cite{pelayovungoc}, we obtain the
classification of integrable systems in symplectic $4$\--manifolds.

\begin{theorem}[Classification of $4$\--dimensional semitoric
  integrable systems]
  \label{class:thm}
  For each semitoric list of ingredients, as in Definition
  \ref{listofingredients}, there exists a $4$\--dimensional simple semitoric
  integrable system with list of invariants equal to this list of
  ingredients, c.f.  Definition \ref{listofinvariants}.  Moreover, two
  $4$\--dimensional simple semitoric integrable systems are isomorphic if,
  and only if, they are constructed from the same list of ingredients.
\end{theorem}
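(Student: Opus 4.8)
The first assertion is a verbatim restatement of Theorem~\ref{existence:thm}, so there the plan is merely to quote it. The equivalence in the second assertion will be deduced by combining that existence statement with the uniqueness statement Theorem~\ref{mainthm}; the main preliminary task is to pin down the meaning of ``constructed from the same list of ingredients''. I say that a $4$-dimensional simple semitoric system \emph{realizes} a semitoric list of ingredients $\mathcal{L}$ (Definition~\ref{listofingredients}) if it is isomorphic to one of the systems output by the construction underlying Theorem~\ref{existence:thm} when fed the input $\mathcal{L}$.

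The first — and essentially only nontrivial — step is the bookkeeping that makes this notion well behaved. On the one hand, any two outputs of the construction for a fixed $\mathcal{L}$ have list of invariants equal to $\mathcal{L}$ by Theorem~\ref{existence:thm}, hence are isomorphic by Theorem~\ref{mainthm}; so ``realizes'' does not depend on the choices made inside the construction. On the other hand, the list of invariants (Definition~\ref{listofinvariants}) of an \emph{arbitrary} $4$-dimensional simple semitoric system is itself a semitoric list of ingredients: item (i) is an integer $0\le m_f<\infty$; item (ii) lies in $(\R[[X,Y]]_0)^{m_f}$ thanks to the normalization $\sigma_2(0)\in[0,2\pi)$ fixed in Section~\ref{taylor:sec}; item (iii) is a Delzant semitoric polygon by Lemma~\ref{2:prop} together with Definition~\ref{defi:delzant-semitoric}; item (iv) satisfies the strict inequalities required in Definition~\ref{listofingredients} by~(\ref{height:eq2}); and item (v) is by construction a $(G_{m_f}\times\mathcal{G})$-orbit of a weighted polygon pondered by integers. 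Combining the two remarks, every simple semitoric system realizes exactly one list, namely its own list of invariants: feed that list to Theorem~\ref{existence:thm} to obtain a system with the same invariants, then apply Theorem~\ref{mainthm} to identify it with the original system up to isomorphism.

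With this vocabulary the equivalence is formal. If two $4$-dimensional simple semitoric systems realize the same list $\mathcal{L}$, then by Theorem~\ref{existence:thm} both have list of invariants $\mathcal{L}$, so their invariant lists coincide and Theorem~\ref{mainthm} produces an isomorphism between them. Conversely, if two such systems are isomorphic, then by Theorem~\ref{mainthm} they have the same list of invariants $\mathcal{L}$, which by the previous paragraph is a semitoric list of ingredients realized by each of them; hence both are constructed from $\mathcal{L}$. I do not anticipate any real obstacle beyond the checklist in the second paragraph, since each of its items has already been recorded earlier in the paper (the $\sigma_2(0)\in[0,2\pi)$ convention of Section~\ref{taylor:sec}, Lemma~\ref{2:prop}, and inequality~(\ref{height:eq2})); everything else is a juxtaposition of Theorems~\ref{existence:thm} and~\ref{mainthm}.
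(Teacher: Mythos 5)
Your proof is correct and follows exactly the paper's approach: the paper presents Theorem~\ref{class:thm} as the immediate juxtaposition of Theorem~\ref{existence:thm} (existence) and Theorem~\ref{mainthm} (uniqueness), without a separate proof. You merely make explicit the harmless bookkeeping that the phrase ``constructed from the same list of ingredients'' presupposes (that the invariant list of any simple semitoric system is itself a valid semitoric list of ingredients, via Lemma~\ref{2:prop}, inequality~(\ref{height:eq2}), and the $\sigma_2(0)\in[0,2\pi)$ normalization), which the paper takes for granted.
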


\section{Proof of Main Theorem}

Let $\Big(\Delta,\,(\ell_{\lambda_j})_{j=1}^s,\,
(\epsilon_j)_{j=1}^s\Big)$ be a representative of $[\Delta_{\scriptop{w}}]$ with
all $\epsilon_{j}$'s equal to $+1$. The strategy is to use the glueing
procedure of Section~\ref{glueingsection} in order to obtain a
semitoric system by constructing a suitable singular torus fibration
above $\Delta\subset\RM^2$.

For $j=1,\dots, m_f$, let $c_j\in\RM^2$ be the point with coordinates
\begin{equation}
  c_j=(\lambda_j,\,h_j+\min(\pi_2(\Delta\cap\ell_{\lambda_j}))).
  \label{equ:cuts}
\end{equation}
Because of the assumption on $h_j$, all points $c_j$ lie in the
interior of the polygon $\Delta$. We call these points \emph{nodes}.
We denote by $\ell_j^+$ the vertical half-line through $c_j$ pointing
upwards. We call these half-lines \emph{cuts}.

\ouf

We have divided the proof of the theorem in a preliminary step, three
intermediate steps and a conclusive step. In the preliminary step we
construct a convenient covering of the polygon $\Delta$.

Then we proceed as follows.  First we construct a ``semitoric system''
over the part of the polygon away from the sets in the covering that
contain the cuts $\ell_j^+$; then we attach to this ``semitoric
system'' the focus\--focus fibrations i.e.  the models for the systems
in a small neighborhood of the nodes.  Third, we continue to glue the
local models in a small neighborhood of the cuts. The ``semitoric
system'' is given by a proper toric map only in the preimage of the
polygon away from the cuts. We use the results of Section
\ref{glueingsection} as a stepping stone throughout.

Finally we recover the smoothness of the system and observe that the
invariants of the system are precisely the ingredients we started
with.

\paragraph{Preliminary stage. \emph{A convenient covering.}---}
We construct an open cover of the polygon. Because of the discreteness
of the set of vertices of the polygon, and the local compactness of
$\RM^2$, we one can find an open cover $(\Omega_\alpha)_{\alpha\in A}$
of $\Delta$ such that the following three properties hold: there
exists $\rho>0$ such that all $\Omega_\alpha$'s are integral-affine
images of the open cube $C:=I^2$ with $I=:]-\rho,\rho[$, i.e for every
$\alpha\in A$ there exists $R_\alpha\in\textup{Aff}(2,\ZM)$, such that
$\Omega_\alpha = R_\alpha(C)$; each vertex of the polygon, and each
node, is contained in only one open set $\Omega_\alpha$; two open sets
containing a vertex or a node never intersect each other.  In fact, if
$$
C_{\scriptop{e}} := C\cap \{y\geq 0\},\,\,\,\,\,C_{\scriptop{ee}}:=C
\cap\{x\geq 0\}\cap \{y\geq0\},
$$
one can assume that, for any $\alpha\in A$, \emph{(1)} if
$\Omega_\alpha$ intersects $\partial\Delta$ but does not contain any
vertex then $\Omega_\alpha\cap\Delta = R_\alpha(C_{\scriptop{e}})$,
and that \emph{(2)} if $\Omega{_\alpha}$ contains a Delzant corner,
then $\Omega_\alpha\cap\Delta = R_\alpha(C_{\scriptop{ee}})$.  The
first case holds since along any edge one can find a primitive vector,
and complete it to a $\ZM$\--basis of $\ZM^2$. It remains to compose
by a suitable translation to position the image of $C_{\scriptop{e}}$
at the right place. The second case is similar, since at a Delzant
corner the primitive vectors of the meeting edges form a $\ZM$\--basis
of $\ZM^2$, c.f. Definition \ref{delzantweightedpolygon:def}.

\paragraph{First stage. \emph{Away from the cuts.}---}
Let $A'\subset A$ be the subset obtained by removing all indices
intersecting the cuts. We construct a semitoric system above
$\bigcup_{\alpha\in A'}\Omega_\alpha$, by glueing the following local
models. Let $\D$ be the open disk in $\op{T}^*\RM=\RM^2$ of radius
$\sqrt{2\rho}$, centered at the origin. Consider the following models:
the regular model~: $M_{\scriptop{r}}:=\T^2\times C\subset T^*\T^2$
with momentum
map $$F_{\scriptop{r}}(x_1,\,x_2,\,\xi_1,\,\xi_2):=(\xi_1,\,\xi_2);$$
the tranversally elliptic model~: $M_{\scriptop{e}}:=(\T^1\times
I)\times \D\subset \op{T}^*\T^1\times \op{T}^*\RM$, with momentum map
  $$F_{\scriptop{e}}(x_1,\,\xi_1,\,x_2,\,\xi_2):=(\xi_1,\, (x_2^2+\xi_2^2)/2);$$
  and the elliptic\--elliptic model~: $M_{\scriptop{ee}}:=\D\times
  \D\subset \op{T}^*\RM\times \op{T}^*\RM$, with momentum map
  $$F_{\scriptop{ee}}(x_1,\,\xi_1,\,x_2,\,\xi_2):=((x_1^2+\xi_1^2)/2,\,(x_2^2+\xi_2^2)/2).$$
  Observe that $F_{\scriptop{r}}(M_{\scriptop{r}})=C$,
  $F_{\scriptop{e}}(M_{\scriptop{e}})= C_{\scriptop{e}}$, and
  $F_{\scriptop{ee}}(M_{\scriptop{ee}})=C_{\scriptop{ee}}$. Notice
  also that these models are all toric, in the sense that the momentum
  maps generate an effective hamiltonian $\T^2$ action. What's more,
  these momentum maps are proper for the topology induced on their
  images.

  Given any $\Omega_\alpha$, $\alpha\in A'$, we obtain a (singular)
  Lagrangian momentum map over $\Omega_\alpha$, whose image is
  precisely $\Omega_\alpha\cap\Delta$ by the following simple rule~:
  \emph{(a)} If $\Omega_\alpha$ contains no boundary points of
  $\Delta$ and no nodes, then we choose $M_\alpha:=M_{\scriptop{r}}$,
  with momentum map $F_\alpha:=R_\alpha\circ F_{\scriptop{r}}$;
  \emph{(b)} If $\Omega_\alpha$ interects $\partial\Delta$ but does
  not contain vertices, we choose $M_\alpha:=M_{\scriptop{e}}$, with
  momentum map $F_\alpha:=R_\alpha\circ F_{\scriptop{e}}$.  \emph{(c)}
  If $\Omega_\alpha$ contains a Delzant coner, we choose
  $M_\alpha:=M_{\scriptop{ee}}$, with momentum map
  $F_\alpha:=R_\alpha\circ F_{\scriptop{ee}}$.

  We describe now the transition functions~: when
  $\Delta_{\alpha\beta}:=\Omega_\alpha\cap \Omega_{\beta}\neq
  \emptyset$, we want to define a symplectomorphism
  \begin{eqnarray} \label{symplecto:eq} \phy_{\alpha\beta}:
    F_\alpha^{-1}(\Delta_{\alpha\beta})\fleche
    F_\beta^{-1}(\Delta_{\alpha\beta}) \,\,\,\,\,\,\, \textup{such
      that}\,\,\,\, \phy_{\alpha\beta}^* F_\beta=F_\alpha.
  \end{eqnarray}
  For this we use the following notation~: when
  $R\in\textup{Aff}(2,\,\ZM)$, we denote by $\tilde{R}$ the
  symplectomorphism $ \tilde{R}:\T^2\times\RM^2 (=\op{T}^*\T^2) \to
  \T^2\times\RM^2 $ given by $(x,\, \xi) \mapsto ((\trsp
  \op{d}\!R)^{-1}x,\, R\xi)$, where $\op{d}\!R$ is the linear part of
  $R$.  Remark that $\xi\circ \tilde{R}=R\circ \xi$.
  \\
  \\
  {\em Case 1.} If both $F_\alpha$ and $F_\beta$ are regular models,
  we let
  \begin{equation}
    \phy_{\alpha\beta}:=\tilde{R}_\beta^{-1}\, \tilde{R}_\alpha.
    \label{equ:rr}
  \end{equation}
  Then $F_\beta\circ\phy_{\alpha\beta}=R_\beta \circ F_{\scriptop{r}}
  \circ \phy_{\alpha\beta} = F_{\scriptop{r}} \circ
  \tilde{R}_\beta\circ \phy_{\alpha\beta} = F_{\scriptop{r}} \circ
  \tilde{R}_\alpha = F_\alpha$, i.e. (\ref{symplecto:eq}) holds.
  \\
  \\
  {\em Case 2.} If $F_\alpha$ is regular and $F_\beta$ is
  transversally elliptic, we introduce the symplectomorphism
  (symplectic polar coordinates)
  \[
  \begin{split}
    \phy_{\scriptop{re}} : M_{\scriptop{r}} \cap
    (\T^1\times\RM)\times(\T^1\times\RM_+^*) &
    \fleche (\T^1\times\RM)\times (\RM^2\setminus\{0\}) \cap M_{\scriptop{e}}  \\
    (x_1,\,\xi_1,\,x_2,\, \xi_2) & \mapsto (x_1,\,\xi_1,\,
    \sqrt{2\,\xi_2}\cos(x_2),\,-\sqrt{2\,\xi_2}\sin(x_2)).
  \end{split}
  \]
  Notice that $\phy_{\scriptop{re}}^*F_{\scriptop{e}} =
  F_{\scriptop{r}}$. Thus we can define
  \begin{equation}
    \phy_{\alpha\beta}:= \phy_{\scriptop{re}}\circ
    \tilde{R}_\beta^{-1}\tilde{R}_\alpha.\label{equ:re}
  \end{equation}
  We have $F_\beta\circ \phy_{\alpha\beta}=R_\beta \circ
  F_{\scriptop{e}}\circ \phy_{\scriptop{re}} \circ
  \tilde{R}_\beta^{-1}\tilde{R}_\alpha=R_\beta\circ F_{\scriptop{r}}
  \circ \tilde{R}_\beta^{-1}\tilde{R}_\alpha = F_{\scriptop{r}}\circ
  \tilde{R}_\alpha = F_\alpha$, i.e. (\ref{symplecto:eq}) holds.
  \\
  \\
  {\em Case 3}. Similarly, if $F_\alpha$ is regular and $F_\beta$ is
  elliptic\--elliptic, we introduce the symplectomorphism
  \[
  \begin{split}
    \phy_{\scriptop{ree}} : M_{\scriptop{r}} \cap
    (\T^1\times\RM_+^*)\times(\T^1\times\RM_+^*) &
    \fleche (\RM^2\setminus\{0\})\times (\RM^2\setminus\{0\}) \cap M_{\scriptop{ee}}  \\
    (x_1,\,\xi_1,\,x_2,\,\xi_2) & \mapsto \left(\begin{split}
        \sqrt{2\,\xi_1}\cos(x_1),\, &-\sqrt{2\,\xi_1}\sin(x_1),\,\\
        \sqrt{2\,\xi_2}\cos(x_2),\, &-\sqrt{2\,\xi_2}\sin(x_2).
      \end{split}\right)
  \end{split}
  \]
  Again $\phy_{\scriptop{ree}}^*F_{\scriptop{ee}}=F_{\scriptop{r}}$,
  and if we define
  \begin{equation}
    \phy_{\alpha\beta}:= \phy_{\scriptop{ree}}\circ
    \tilde{R}_\beta^{-1}\, \tilde{R}_\alpha,
    \label{equ:ree}
  \end{equation}
  (\ref{symplecto:eq}) holds.
  \\
  \\
  {\em Case 4}. If both $F_\alpha$ and $F_\beta$ are transversally
  elliptic models, then the affine map
  $R_{\alpha\beta}:=R_\beta^{-1}R_\alpha$ is an oriented
  transformation that preserves the upper half\--plane. Thus the
  horizontal axis is globally preserved, and the vector $e_1=(1,\,0)$
  is an eigenvector of $\op{d}\!R_{\alpha\beta}$. Since
  $\op{d}\!R_{\alpha\beta}\in\textup{SL}(2,\, \ZM)$, it is of the form
  $$T_k:=\begin{pmatrix} 1 & k\\0 & 1
  \end{pmatrix}$$ for some $k\in\ZM$.  Hence
  $R_{\alpha\beta}=\tau_u\circ T_k$ where $\tau_u$ is the translation
  by a horizontal vector $u=(u_1,\, 0)$.  Consider the
  symplectomorphism $\bar{R}_{\alpha\beta}(x_1,\, \xi_1,\, x_2,\,
  \xi_2):=(x_1',\, \xi_1',\, \,x_2',\,\xi_2')$ of $\op{T}^*\T^1\times
  \op{T}^*\RM$ given by
  \begin{equation*}
    \label{equ:S}
    \left\{
      \begin{aligned}
        x_1'&=x_1\\
        \xi_1'&= \xi_1+k (x_2^2+\xi_2^2)/2 + u_1\\
        (x_2'+\op{i}\,\xi_2')&=\textup{e}^{\op{i}\,kx_1}(x_2+i\xi_2).
      \end{aligned}\right.
  \end{equation*}
  Observe that $F_{\scriptop{e}}\circ
  \bar{R}_{\alpha\beta}=R_{\alpha\beta}\circ F_{\scriptop{e}}$. Now we
  define
  \begin{equation}
    \phy_{\alpha\beta} := {\mbox{$\bar{R}$}_{\alpha\beta}}|_{F_\alpha^{-1}(\Delta_{\alpha\beta})},
    \label{equ:ee}
  \end{equation}
  and we verify $F_\beta\circ \bar{R}_{\alpha\beta}=R_\beta
  F_{\scriptop{e}}\circ\bar{R}_{\alpha\beta}= R_\beta R_{\alpha\beta}
  F_{\scriptop{e}}= R_\alpha F_{\scriptop{e}}=F_\alpha$, hence
  (\ref{symplecto:eq}) holds.
  \\
  \\
  {\em Case 5}.  If $F_\alpha$ is a transversally elliptic model,
  while $F_\beta$ is elliptic\--elliptic, then, as in the previous
  case, the intersection $\Delta_{\alpha\beta}$ contains a portion of
  an edge, but not the vertex itself. This edge is mapped by $R_\beta$
  from either the horizontal or vertical positive axis. Suppose for
  simplicity that it is the horizontal axis.  As before, the affine
  map $R_{\alpha\beta}$ defined in Case 4 is an oriented
  transformation that either preserves the upper half\--plane, and
  thus one can construct a symplectomorphism $\bar{R}_{\alpha\beta}$
  of $\op{T}^*\T^1\times \op{T}^*\RM$ such that $F_{\scriptop{e}}\circ
  \bar{R}_{\alpha\beta}=R_{\alpha\beta}\circ
  F_{\scriptop{e}}$. Introduce the symplectomorphism
  \[
  \begin{split}
    \phy_{\scriptop{eee}} : M_{\scriptop{e}} \cap
    (\T^1\times\RM_+^*)\times\RM^2 &
    \fleche (\RM^2\setminus\{0\})\times \RM^2 \cap M_{\scriptop{ee}}  \\
    (x_1,\,\xi_1,\,\,x_2,\,\xi_2) & \mapsto
    (\sqrt{2\,\xi_1}\cos(x_1),\, -\sqrt{2\,\xi_1}\sin(x_1),\,
    x_2,\,\xi_2).
  \end{split}
  \]
  Notice that
  $F_{\scriptop{ee}}\circ\phy_{\scriptop{eee}}=F_{\scriptop{e}}$ and,
  whenever both are defined,
  $\phy_{\scriptop{eee}}=\phy_{\scriptop{ree}}\circ\phy_{\scriptop{re}}^{-1}$. We
  define
  \begin{equation}
    \phy_{\alpha\beta}:=\phy_{\scriptop{eee}}\circ \bar{R}_{\alpha\beta},
    \label{equ:eee}  
  \end{equation}
  and verify now routinely that
  $F_\beta\circ\phy_{\alpha\beta}=F_\alpha$, i.e. (\ref{symplecto:eq})
  also holds in this case.

  \normalfont

  We have defined the transition maps $\phy_{\alpha\beta}$ in the five
  cases~(\ref{equ:rr}), (\ref{equ:re}), (\ref{equ:ree}),
  (\ref{equ:ee}), and~(\ref{equ:eee}), and verified that equation
  (\ref{symplecto:eq}) holds for each of them. In fact one should also
  mention that for the non-symmetric cases~(\ref{equ:re}),
  (\ref{equ:ree}), and~(\ref{equ:eee}), we let
  $\phy_{\beta\alpha}:=\phy_{\alpha\beta}^{-1}$ (this is automatic for
  the symmetric cases~(\ref{equ:rr}) and~(\ref{equ:ee})).  Then it is
  easy to verify that the cocycle condition if fulfilled. Namely, when
  the triple intersection
  $\Omega_{\alpha\beta}\cap\Omega_{\beta\gamma}\cap\Omega_{\gamma\alpha}$
  is not empty, then
  \[
  \phy_{\gamma\alpha}\circ\phy_{\beta\gamma}\circ\phy_{\alpha\beta} =
  \textup{Id}.
  \]

  Thus we can apply the glueing construction,
  c.f. Theorem~\ref{theo:glueing}, and obtain a symplectic manifold
  $M_{A'}$ with a surjective map
$$
F_{A'}:M_{A'}\fleche \bigcup_{\alpha\in A'}\Omega_\alpha\subset\RM^2
$$
and, for each $\alpha\in A' \subset A$, there is a symplectic
embedding $\iota_\alpha: M_\alpha\hookrightarrow M_{A'}$ such that
$\iota_\alpha^* F_{A'}=F_\alpha$.  Since all $F_\alpha$ are proper
smooth toric momentum maps, so is $F_{A'}$.

\paragraph{Second stage. \emph{Attaching focus\--focus fibrations.}---}
Fix an integer $i$, with $1 \le i \le m_f$.  Using the classification
result of~\cite{vungoc0}, one can construct a focus-focus model
associated with an arbitrary Taylor series invariant. Precisely, for
each node $c_i$, there exists a symplectic manifold $M_{i}$ equipped
with a smooth map $F_{i}:M_{i}\fleche C$ such that the symplectic
invariant of the induced singular foliation is precisely the Taylor
series $S^\infty$.  Using the result of~\cite{vungoc}, one can
construct a continuous map $\mu_i:M_i\fleche D_i$, where
$D_i\subset\RM^2$ is some simply connected open set around the origin,
that is a smooth proper toric momentum map outside $\mu_i^{-1}(\ell)$,
where $\ell:=\{(0,\,y)\, | \,y\geq 0\}$. In fact $\mu_i=g_i\circ F_i$,
for some homeomorphism $g_i:C\fleche D_i$ that is smooth outside
$\ell$, and which preserves the first component~: it is of the form
$$
g_i(x,\,y)=(x,\,f_i(x,\,y)).
$$
This construction depends on the choice of a local toric momentum map
for the fibration over $C\setminus\ell$. Here we choose the privileged
momentum map as defined in Section \ref{indexsection}.  We are now in
position to add to the index set $A'$ all the indices $\alpha\in A$
corresponding to the nodes, and thus defining a new index set $A''$.
If $\Omega_\alpha$ contains the node $c_i$, we let $R_\alpha$ be the
matrix $T_{k_i}$ left-composed by the translation from the origin to
the node $c_i$. Here $k_j$ is the integer given as ingredient (v) in
the list. We may assume that $\Omega_\alpha=R_\alpha(D_i)$.  Then we
choose $M_\alpha:=M_i$ with momentum map $F_\alpha:=R_\alpha\circ
\mu_i$.

\begin{figure}[htb]
  \begin{center}
    \includegraphics{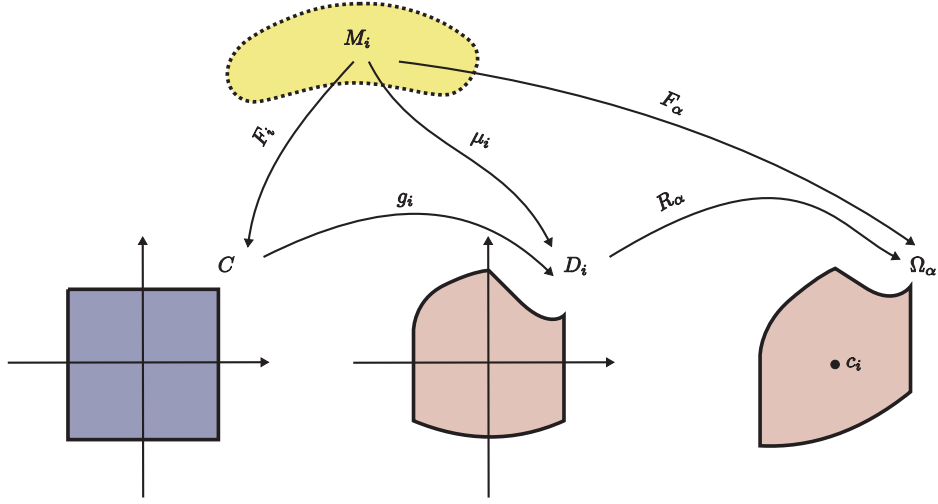}
    \caption{The pieces $M_i$ and the chart diagrams for
      $F_{\alpha},\,F_i,\, g_i$ and $R_{\alpha}$.}
  \end{center}
\end{figure}

By making $\rho$ small enough, one may assume that all $\Omega_\beta$,
$\beta\in A'$, intersecting an open set $\Omega_\alpha$ containing a
node carry regular models. Thus we need to define transition functions
between a regular model and a focus-focus model. On
$\Delta_{\alpha\beta}:=\Omega_\alpha\cap\Omega_\beta$, both momentum
maps $F_\alpha$ and $F_\beta$ are regular. Contrary to all previous
cases, the focus-focus model $F_\alpha$ is not explicit, and we cannot
simply provide an elementary formula for the transition map
$\phy_{\alpha\beta}$.  However, since $C\setminus\ell$ is simply
connected and a set of regular values of $F_i$, we can invoke the
Liouville\--Mineur\--Arnold action-angle theorem and assert that there
exists a symplectomorphism $ \phy_{i} : F_i^{-1}(C\setminus\ell)
\fleche \T^2\times C'\subset T^*\T^2 = \{(x,\xi) \in \mathbb{T}^2
\times \mathbb{R}^2\} $ such that
$$
F_i=\phy_i^* (h_i(\xi))\,\,\, \textup{for some diffeomorphism}\,\,\,
h_i:C'\fleche C\setminus\ell.
$$ 
Then $\mu_i=\phy_i^*(g_i\circ h_i(\xi))$. Since both $\mu_i$ and $\xi$
are toric momentum maps for the same foliation, there exists a
transformation $H_i\in\textup{Aff}(2,\ZM)$ such that $g_i\circ
h_i=H_i$.

Thus, if $F_\alpha$ is focus-focus and $F_\beta$ is regular, we
introduce the symplectomorphism
\begin{equation}
  \label{equ:ff}
  \phy_{\alpha\beta}:= \tilde{R}_\beta^{-1}\tilde{R}_\alpha
  \tilde{H}_i\circ\phy_i\quad  :
  F_\alpha^{-1}(\Delta_{\alpha\beta})\fleche
  F_\beta^{-1}(\Delta_{\alpha\beta}).
\end{equation}
We verify $F_\beta\circ \phy_{\alpha\beta} =
F_r\circ\tilde{R}_\beta\circ \phy_{\alpha\beta} = R_\alpha H_i
F_r\circ\phy_i = R_\alpha\mu_i = F_\alpha$, so we have shown
(\ref{symplecto:eq}).

We can now include these nodal pieces in the symplectic glueing
construction using Theorem~\ref{theo:glueing}, which defines a
symplectic manifold $M_{A''}$ and a proper map
$$
F_{A''} \colon M_{A''} \to \bigcup_{\alpha \in A''} \Omega_{\alpha}
\subset \mathbb{R}^2.
$$
However $F_{A''}$ is not smooth everywhere, but it is a smooth toric
momentum map outside the preimages of the cuts $\ell_j^+$
($j=1,\dots,m_f$).

\paragraph{Third stage. \emph{Filling in the gaps}.---}
Here we add the open sets $\Omega_\alpha$ that were covering the cuts
$\ell_i$ by switching these lines on the other side.
Let $t_i:=t_{\ell_{\lambda_i}}$ as in Section
\ref{semitoric:sec}.  The cut $\ell_i^+$ is invariant under
$t_i$. The open sets $t_i(\Omega_\alpha)$, $\alpha\in
A\setminus A''$ form a cover of $\ell_i\cap t_i(\Delta)$. Within
the geometry of the new polygon $t_i(\Delta)$, each of these open
sets can be associated with either a regular model, a transversally
elliptic model, or an elliptic-elliptic model (indeed, under the
transformation $t_i$, a fake corner disappears, and a hidden
Delzant corner unhides itself.)

Thus we can add these to our glueing data, which amounts to equip each
such open set $\Omega_\alpha$ with the model $(M_\alpha, \,
t_i^{-1}\circ F_\alpha)$, where $(M_\alpha,\,F_\alpha)$ is
determined as before, but for the transformed polygon
$t_i(\Delta)$.

The transition maps are defined with the same formulas as before,
taking into account that the map $R_\alpha$ is now a piecewise affine
transformation. The cocycle conditions remain valid as well.

Doing this for all indices $i$, because all the $F_{\alpha}$ are
continuous and proper, by Theorem \ref{theo:glueing}, we obtain a
smooth symplectic manifold $M=M_{A}$ equipped with a proper,
continuous map $\mu=F_A$
\begin{eqnarray} \label{mu:map} \mu:M\fleche \bigcup_{\alpha\in
    A}\Omega_\alpha \subset \mathbb{R}^2,
\end{eqnarray}
whose image is precisely $\Delta$.


However, the map $\mu$ is a proper toric momentum map only outside the
cuts $\ell_i$. In other words, $\mu$ fails to be smooth along the cuts
$\ell_i$. (Note that in the symplectic glueing construction, Theorem
\ref{theo:glueing}, we did not make any smoothness assumption on the
$F_{\alpha}$, nor made any conclusion on the smoothness of $F$).

\paragraph{Fourth and final stage. \emph{Recovering smoothness.}---}

In this step we compose the final momentum map $\mu$ in (\ref{mu:map})
on the left by a suitable homeomorphism in order to make it smooth.
Let $\Omega_\alpha$ be the open set containing the node $c_i$. Let
$h_i=g_i^{-1}:D_i\fleche C$. The map $h_i$ is a bilipschitz
homeomorphism fixing the origin and a smooth diffeomorphism outside
the positive vertical axis.  It is of the form
\[
h_i(x,\,y)=(x,\,\eta_i(x,\,y)).
\]
Since $h_i$ is orientation preserving, $\deriv{\eta_i}{y}(x,\, y)>0$
for all $(x,\,y)\in D_i$.  Let $\delta_i>0$ be such that
$[-2\delta_i,\, 2\delta_i]^2\subset D_i$ and consider the vertical
half-strip $\mathcal{S}_{\delta_i}:=[-\delta_i,\,
\delta_i]\times[-\delta_i,\, \infty[$.

\begin{claim} \label{1:claim} There exists a function
  $\tilde{\eta}_i:D_i\fleche C$ such that
  \begin{itemize}
  \item[(1)] $\tilde{\eta}_i(x,\, y)=\eta_i(x,\, y)$ for all
    $(x,\,y)\in D_i \cap \mathcal{S}_{\delta_i}$;
  \item[(2)] $\tilde{\eta}_i(x,\,y)=y$ for all $(x,\,y)\in
    D_i\setminus \mathcal{S}_{2\delta_i}$;
  \item[(3)] $\deriv{\tilde{\eta}_i}{y}(x,\, y)>0$ for all $(x,\,
    y)\in D_i$.
  \end{itemize}
\end{claim}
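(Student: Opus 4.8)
The plan is to produce $\tilde\eta_i$ by interpolating, by means of a cut\--off function, between $\eta_i$ and the coordinate projection $\pi_2\colon(x,y)\mapsto y$, choosing the cut\--off so that monotonicity of $y\mapsto\tilde\eta_i(x,y)$ is not destroyed. Concretely, I would fix a smooth $\chi\colon\R\to[0,1]$ with $\chi\equiv1$ on $[-\delta_i,\delta_i]$ and $\chi\equiv0$ outside $]-2\delta_i,2\delta_i[$, a smooth $\psi\colon\R\to[0,1]$ with $\psi\equiv1$ on $[-\delta_i,\infty[$ and $\psi\equiv0$ on $]-\infty,-2\delta_i]$, and set $\theta:=\chi(x)\,\psi(y)$ and
\[
  \tilde\eta_i\ :=\ \theta\,\eta_i+(1-\theta)\,\pi_2 .
\]
Properties (1) and (2) then hold for purely support\--theoretic reasons: on $D_i\cap\mathcal{S}_{\delta_i}$ one has $|x|\le\delta_i$ and $y\ge-\delta_i$, so $\theta\equiv1$ and $\tilde\eta_i=\eta_i$; and a point of $D_i\setminus\mathcal{S}_{2\delta_i}$ satisfies $|x|>2\delta_i$ or $y<-2\delta_i$, and in either case $\theta=0$, so $\tilde\eta_i=\pi_2$.

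The real content is property (3). Differentiating in $y$,
\[
  \frac{\partial\tilde\eta_i}{\partial y}
  \ =\ \theta\,\frac{\partial\eta_i}{\partial y}+(1-\theta)
  \ +\ \chi(x)\,\psi'(y)\,\bigl(\eta_i(x,y)-y\bigr).
\]
The first two terms form a convex combination of $\partial\eta_i/\partial y$ and $1$, hence are at least $\min(\partial\eta_i/\partial y,\,1)$; because $h_i$ is bilipschitz with some constant $L\ge1$ and has the triangular form $(x,y)\mapsto(x,\eta_i(x,y))$, its Jacobian determinant $\partial\eta_i/\partial y$ lies in $[L^{-2},L^{2}]$ throughout $D_i$, so these two terms together are bounded below by $c:=\min(L^{-2},1)>0$. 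The third term is supported in the compact rectangle $K:=\{\,|x|\le2\delta_i,\ -2\delta_i\le y\le-\delta_i\,\}\subset D_i$, which lies in the lower half\--plane and therefore keeps away from the axis $\{x=0,\ y\ge0\}$ where $\eta_i$ fails to be smooth; writing $C':=\sup_K|\eta_i-\pi_2|$ one obtains $\partial\tilde\eta_i/\partial y\ge c-\|\psi'\|_\infty\,C'$, which is $>0$ provided $\psi$ is chosen with $\|\psi'\|_\infty<c/C'$.

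Thus everything reduces to arranging $\|\psi'\|_\infty\,C'<c$, and this is the only genuine obstacle. Since $\psi$ must drop from $1$ to $0$ across an interval of length $\delta_i$ one has $\|\psi'\|_\infty\ge\delta_i^{-1}$, so one must know that $C'=\sup_K|\eta_i-\pi_2|$ is small compared with $\delta_i$. This is supplied by the second stage of the construction: the map $g_i=h_i^{-1}$ is built from the focus\--focus model with the cut $\ell=\{x=0,\ y\ge0\}$ pointing \emph{upward} from the node, so its deviation from the identity is confined to a neighbourhood of $\ell$; in particular $g_i$, and hence $h_i$, coincides with the identity on $\{y\le0\}\cap D_i$, so that $C'=0$ on $K$. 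Granting this, one can even dispense with $\psi$ and take $\theta:=\chi(x)$: then $\partial\tilde\eta_i/\partial y=\chi\,\partial_y\eta_i+(1-\chi)>0$ everywhere, (1) holds because $\chi\equiv1$ near $x=0$, and (2) holds because $\chi\equiv0$ for $|x|>2\delta_i$ while for $y<-2\delta_i<0$ we have $\eta_i(x,y)=y$, so $\tilde\eta_i=y$ independently of $\chi$. The only step not reducible to soft interpolation is thus the verification that $h_i$ restricts to the identity below the node, which one reads off the explicit focus\--focus normal form used in the second stage.
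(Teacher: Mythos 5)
The proposal breaks down at the crucial step: the assertion that $g_i$, and hence $h_i$, ``coincides with the identity on $\{y\le 0\}\cap D_i$'' is false, and there is nothing in the second stage that supports it. The non\-smoothness of $g_i$ is confined to the cut $\ell$, but its deviation from the identity certainly is not. Indeed $g_i$ is the transition from Eliasson normal form coordinates to the action coordinates of the privileged momentum map $\nu=(J,H_p)$, and $H_p$ has the semi\-global form
\[
H_p=\tfrac{1}{2\pi}\bigl(-\Re(z\log z - z)+S_i(z)\bigr),
\]
so $f_i$ (and therefore $\eta_i$) involves the term $z_1\log|z|$ on \emph{both} sides of the cut. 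This is the whole point of the monodromy around a focus--focus value: the developing map of the singular affine structure is smooth away from a branch cut, but it is nowhere affine in a punctured neighbourhood of the node. Concretely, $\partial_y f_i=\tfrac{1}{2\pi}\bigl(\sigma_1-\log|z|\bigr)\to+\infty$ as $(x,y)\to 0$, hence $\partial_y\eta_i\to 0$ and $\eta_i(0,y)-y\to -y\neq 0$ from the lower half\-plane, which already contradicts your claim.

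Without that claim the convex\-combination argument cannot be saved. You correctly isolate the only delicate point, namely $\|\psi'\|_\infty\,C'<c$ with $C'=\sup_K|\eta_i-y|$ and $c=\inf_K\min(\partial_y\eta_i,1)$. But on $K=\{|x|\le 2\delta_i,\ -2\delta_i\le y\le -\delta_i\}$ one has $C'\asymp\delta_i$ (since $\eta_i(x,y)-y\approx -y$ near the node), while $\|\psi'\|_\infty\gtrsim\delta_i^{-1}$, so $\|\psi'\|_\infty\,C'$ stays of order $1$; at the same time $c\to 0$ as $\delta_i\to 0$ because $\partial_y\eta_i$ is small near the node. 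So the inequality fails no matter how $\delta_i$ and $\psi$ are chosen. The conceptual obstruction is that interpolating the \emph{function} $\eta_i$ against $y$ creates an uncontrolled term $\psi'(y)\,(\eta_i-y)$ in the $y$\-derivative.

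The paper avoids this precisely by interpolating the \emph{derivative} rather than the function: it sets $\zeta_i:=\partial_y\widehat\eta_i-c_i\ge 0$ on the closed set $A_{\delta_i}$, extends it to a nonnegative smooth function $G_i$ on $D_i$ by a nonnegativity\-preserving Whitney\-type extension, and defines $\tilde\eta_i(x,y)=\beta^i(x)+\int_0^y(G_i(x,t)+c_i)\,dt$. Positivity of $\partial_y\tilde\eta_i=G_i+c_i$ is then automatic, and the matching of $\tilde\eta_i$ with $\eta_i$ on $\mathcal{S}_{\delta_i}$ and with $y$ outside $\mathcal{S}_{2\delta_i}$ is obtained by choosing the integration constant $\beta^i(x)$ appropriately. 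If you want to keep the spirit of your interpolation, the fix is to apply the cut\-off to $\partial_y\eta_i$ and then integrate in $y$ while adjusting a function of $x$ to reconcile the two boundary behaviours; but you cannot bypass this by asserting that $\eta_i\equiv y$ below the node.
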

In order to show this recall that if $f \colon A \to \R$ is smooth and
$A \subset U \subset \mathbb{R}^2$ is closed, then $f$ has a smooth
extension to $\tilde{f}\colon U \to \mathbb{R}$ where $U$ is open, see
for example \cite[Lem.~5.58 and Rmk. below it]{wlokaetal}.  Let us
apply this fact in our situation.  Let $ A_{\delta_i}:=(D_i \cap
S_{\delta_i}) \cup (D_i \setminus \op{Int}(S_{\frac{3\delta_i}{2}})),
$ which is a closed subset of $D_i \subset \mathbb{R}^2$, and let
$\widehat{\eta}_i\colon A_{\delta_i} \to \mathbb{R}$ be the smooth
function given by
\begin{eqnarray} \label{widehateta:map} \widehat{\eta}_i(x,\,y)=
  \left\{ \begin{array}{rl}
      \eta_i(x,\,y) & \textup{ if } (x,\,y)\in \,D_i \cap \mathcal{S}_{\delta_i}; \\
      y & \textup{ if } (x,\,y)\in \, D_i \setminus
      \op{Int}(\mathcal{S}_{\frac{3\delta_i}{2}}) . \end{array}
  \right.
\end{eqnarray}

\begin{figure}[htb]
  \begin{center}
    \includegraphics{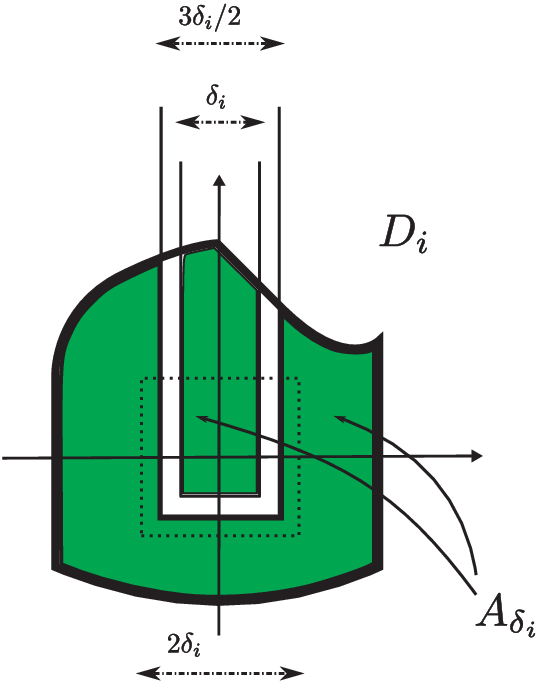}
    \caption{The set $A_{\delta_i}:=(D_i \cap S_{\delta_i}) \cup (D_i
      \setminus \op{Int}(S_{\frac{3\delta_i}{2}}))$, on which
      $\widehat{\eta}_i$ is defined.}
  \end{center}
\end{figure}

Because $A_{\delta_i} \subset D_i$, and $D_i$ is bounded, there exists
a constant $0<c_i<1$ such that $ \deriv{\eta_i}{y}>c $ on
$A_{\delta_i}$ and hence $ \deriv{\widehat{\eta}_i}{y}>c_i
\,\,\textup{on}\,\, A_{\delta_i}.  $ Let $
\zeta_i:=\deriv{\widehat{\eta}_i}{y}-c_i \colon A_{\delta_i} \to
\mathbb{R}, $ which by assumption is strictly positive.  By the above
fact $\zeta_i$ extends to a smooth function $G_i: D_i \to
\mathbb{R}$. Because the proof of the fact preserves non\--negativity,
and $\zeta_i>0$, we have that $G_i \ge 0$.  By possibly shrinking the
size of $D_i$ we can assume that $D_i$ is a disk of radius $r_i>0$
centered at the origin. Let $X_{\delta_i}:=[-r_i,\,
-\frac{3\delta_i}{2}]\cup [\frac{3\delta_i}{2},\, r_i]$,
$Y_{\delta_i}:=[-\delta_i,\,\delta_i]$,
$Z_{\delta_i}:=[-\frac{3\delta_i}{2},\, -\delta_i] \cup [\delta_i,\,
\frac{3\delta_i}{2}]$ and let $\nu^i_1\colon X_{\delta_i} \to
\mathbb{R}$ and $\nu^i_2 \colon Y_{\delta_i} \to \mathbb{R}$ be the
functions given by $\nu^i_1(x):=-\widehat{\eta}_i(x,\,0)$ and
$$\nu^i_2(x):=\widehat{\eta}_i(x,\, -\frac{3\delta_i}{2})
-\int_{0}^{-\frac{3\delta_i}{2}}(G_i(x,\,t)+c_i)\, \op{d}\!t,
$$
where we are using the convention $\int_a^bh=-\int_b^ah$ when $a>b$.
Because $\widehat{\eta}_i$ and $G_i$ are smooth functions, $\nu^i_1$
and $\nu^i_2$ are also smooth. Let $\beta^i \colon [-r_i,\,r_i] \to
\mathbb{R}$ be a smooth extension of the function $X_{\delta_i} \cup
Y_{\delta_i} \to \mathbb{R}$ defined by $\nu^i_1$ on $X_{\delta_i}$
and by $\nu^i_2$ on $Y_{\delta_i}$, which again exists by a partitions
of unity argument.

Consider the function $\widetilde{\eta}_i \colon D_i \to \mathbb{R}$
given by
$$\widetilde{\eta}_i(x,\,y):=
\beta^i(x)+\int_0^y (G_i(x,\,t)+c_i) \, \op{d}\!t.
        $$
        Because $\beta$ is a smooth extension of $\nu^i_1$ and
        $\nu^i_2$, and $G$ is smooth, $\widetilde{\eta}_i$ is smooth.
        We claim that
        $\widetilde{\eta}_i|_{A_{\delta_i}}(x,\,y)=\widehat{\eta}_i(x,\,y)$
        if $(x,\,y) \in A_{\delta_i}$. First assume $x \in
        Y_{\delta_i}$, and moreover that $-r_i \le y \le - \frac{3
          \delta_i}{2}$.  Because $G_i$ is an extension of $g_i$ we
        have that
$$
\widetilde{\eta}_i|_{A_{\delta_i}}(x,\,y) = \nu^i_2(x)+ \Big(
\int_0^{-\frac{3\delta_i}{2}}(G_i(x,\,t)+c_i)\,
\op{d}\!t+\int_{-\frac{3\delta_i}{2}}^y
\deriv{\widehat{\eta}_i}{y}(x,\,t)\, \op{d}\!t\Big),
 $$
 and hence by the fundamental theorem of calculus, and using the
 definition of $\nu^i_2$ we obtain that
 \begin{eqnarray}
   \widetilde{\eta}_i|_{A_{\delta_i}}(x,\,y)=
   \nu^i_2(x)
   +\Big(   \int_0^{-\frac{3\delta_i}{2}}(G_i(x,\,t)+c_i)\, \op{d}\!t
   + (\widehat{\eta}_i(x,\,y)-\widehat{\eta}_i(x,\, -\frac{3\delta_i}{2}))\Big)=\widehat{\eta}_i(x,\,y). 
   \label{es2:eq}
 \end{eqnarray}
 The remaining subcases within the case of $x \in Y_{\delta_i}$ are
 when $-\delta_i \le y \le 0$, which follows by the same reasoning as
 in (a) using the formula for $\nu^i_1$ instead of $\nu^i_2$, the case
 of $0\le y \le r_i$, which is trivial because the extension is
 defined by the original function therein, and the case of $-\frac{3
   \delta_i}{2} \le y \le -\delta_i$, in which $(x,\,y) \notin
 A_{\delta_i}$ so there is nothing to prove. The case of $x\in
 X_{\delta_i}$ follows by the same type of argument as the case of
 $Y_{\delta_i}$.  The case of $x \in Z_{\delta_i}$ is immediate
 because the extension is defined by the original function therein.

 Applying again the fundamental theorem of calculus, because the
 functions $\nu^i_1,\, \nu^i_2,\, \beta^i$ do not depend on $y$, we
 have that
 \begin{eqnarray} \label{eq1} \deriv{\tilde{\eta}_i}{y}=G_i+c_i,
 \end{eqnarray}
 which is strictly positive since $G_i\ge0$ and $c_i>0$.  Because
 (\ref{eq1}) and (\ref{es2:eq}) hold we in turn have, in view of the
 definition (\ref{widehateta:map}) of $\widehat{\eta}$, that
 properties 1, 2, 3 are satisfied.  This concludes the proof of Claim
 \ref{1:claim}

 \vspace{1.7mm}
         
 Let $\Omega_i:=D_i\cup \{(x,\,y) \, | \, y<2\delta_i\}$.  Because of
 the properties 1, 2, 3 of $\tilde{\eta}_i$, the map
$$
\tilde{h}_i:(x,\,y)\mapsto (x,\tilde{\eta}_i(x,\,y))
$$
coincides with $h_i$ in $\mathcal{S}_{\delta_i}$, while it is equal to
the identity outside $\mathcal{S}_{2\delta_i}$. Thus we can extend it
to $\Omega_i$ by letting it to be the identity outside
$D_i\cup\mathcal{S}_{2\delta_i}$. We call this extension
$\tilde{h}_{\Omega_i}$.  Consider the map
$$
\check{h}_{\Omega_i}:=\tilde{h}_{\Omega_i} \circ t_0^{-1},
$$
where $t_0$ is the piecewise affine map $t_\ell$ with $\ell$
being the positive vertical axis. In
$t_0(\Omega\cap\mathcal{S}_{\delta_i})$, it it equal to $h_i\circ
t_0^{-1}$, which is now smooth outside the negative vertical axis
(this follows from~\cite[Thm.~ 3.8]{vungoc}; also from the fact that
it is the homeomorphism that one obtains in the construction of the
generalized momentum map $t_0\circ g_i \circ F_i = t_0\circ
\mu_i$: this amounts to switching the cut downwards.)  Using the claim
at the beginning of this step upside\--down we can modify
$\check{h}_{\Omega_i}$ in $\Omega_i\cap\{y>\delta_i\}$ in such a way
that we can then extend it to be smooth on $t_0(\{y>\delta_i\})$.
We obtain a homeomorphism of $\RM^2$ that we call
$(\check{h}_{\RM^2})_i$.

Define the map $\varphi_i \colon \mathbb{R}^2 \to \mathbb{R}^2$ by
\begin{eqnarray}
  \varphi_i:=R_\alpha\circ (\check{h}_{\RM^2})_i\circ t_0 \circ \nonumber
  R_\alpha^{-1}.
\end{eqnarray}
Because $\varphi_i$ is a composite of homeomorphisms, it is a
homeomorphism.  Moreover, outside of $\mathcal{S}_{2\delta_i}$ we have
that
$$
\varphi_i=R_\alpha\circ (\check{h}_{\RM^2})_i \circ t_0 \circ
R_\alpha^{-1}= R_\alpha\circ (\tilde{h}_{\Omega_i} \circ
t_0^{-1})\circ t_0 \circ R_\alpha^{-1},
$$
and since $\tilde{h}_{\Omega_i}$ is the identity outside of
$\mathcal{S}_{2\delta_i}$ we conclude that $\varphi_i$ is the identity
map outside $\mathcal{S}_{2\delta_i}$.  Now let $\varphi \colon
\mathbb{R}^2 \to \mathbb{R}^2$ be the piecewise defined map
\begin{eqnarray} \label{varphi:map} \varphi(x,\,y):=
  \left\{ \begin{array}{rl}
      \varphi_i(x,\,y) & \textup{ if } (x,\, y)\in \mathcal{S}_{2\delta_i}; \\
      (x,\,y) & \textup{ otherwise }. \end{array} \right.
\end{eqnarray}
Since each $\varphi_i$ is a homeomorphism, and equal to the identity
outside of $\mathcal{S}_{2\delta_i}$, the formula (\ref{varphi:map})
defines a homeomorphism.

\begin{claim} \label{5.2:claim} The map $\tilde{F} \colon M \to
  \mathbb{R}^2$ defined by $\tilde{F}:=\varphi \circ \mu$ is proper,
  and smooth everywhere.
\end{claim}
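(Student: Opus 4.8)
The plan is to treat the two assertions separately. Properness is immediate: $\varphi\colon\R^2\to\R^2$ is a homeomorphism, and $\mu$ is proper by construction (it is the map furnished by Theorem~\ref{theo:glueing} at the third stage), so for any compact $K\subset\R^2$ the set $\tilde F^{-1}(K)=\mu^{-1}(\varphi^{-1}(K))$ is the $\mu$-preimage of the compact set $\varphi^{-1}(K)$, hence compact. For smoothness, since $M=\bigcup_{\alpha\in A}y_\alpha(M_\alpha)$ and each $y_\alpha$ is an embedding, it suffices to prove that $\tilde F\circ y_\alpha=\varphi\circ(\mu\circ y_\alpha)$ is smooth on $M_\alpha$ for every $\alpha$, where $\mu\circ y_\alpha$ is the local model map $F_\alpha$ (or $t_i^{-1}\circ F_\alpha$ for the pieces attached at the third stage). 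For every $\alpha$ whose $\Omega_\alpha$ meets no cut $\ell_i^+$ and contains no node, shrinking $\rho$ and the $\delta_i$'s we may assume $\Omega_\alpha\cap\Delta$ avoids all strips $\mathcal{S}_{2\delta_i}$, so $\varphi=\mathrm{id}$ on the image of $F_\alpha$ and $\tilde F\circ y_\alpha=F_\alpha$, a smooth toric momentum map; this disposes of all regular, transversally elliptic and elliptic--elliptic pieces lying away from the cuts.

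Next I would handle the open set $\Omega_\alpha$ containing a node $c_i$, where $M_\alpha=M_i$ and $F_\alpha=R_\alpha\circ\mu_i=R_\alpha\circ g_i\circ F_i$. On the part of $M_i$ mapping into $R_\alpha(\mathcal{S}_{\delta_i})$, property~(1) of Claim~\ref{1:claim} gives $\tilde h_i=h_i$, and the factor $t_0^{-1}$ hidden in $\check{h}_{\Omega_i}=\tilde h_{\Omega_i}\circ t_0^{-1}$ cancels the explicit $t_0$ in $\varphi_i=R_\alpha\circ(\check{h}_{\R^2})_i\circ t_0\circ R_\alpha^{-1}$, leaving $\varphi_i=R_\alpha\circ h_i\circ R_\alpha^{-1}$ there; hence $\tilde F\circ y_\alpha=R_\alpha\circ h_i\circ g_i\circ F_i=R_\alpha\circ F_i$, which is smooth because $F_i$ is the smooth focus--focus model of~\cite{vungoc0}. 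On the rest of $M_i$ one is away from the cut $\ell$, where $g_i$ and $\varphi_i$ (built from the smooth $\tilde\eta_i$) are both smooth, so $\tilde F\circ y_\alpha$ is smooth there as well; since $\ell\cap D_i$ stays inside $\mathcal{S}_{\delta_i}$, these subregions exhaust $M_i$.

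The remaining and hardest case is an index $\alpha$ whose $\Omega_\alpha$ meets a cut $\ell_i^+$ but not the node: these are the pieces attached at the third stage, for which $\mu\circ y_\alpha=t_i^{-1}\circ F_\alpha$ with $F_\alpha=R_\alpha\circ F_{\mathrm{model}}$ a smooth toric momentum map for the transformed polygon $t_i(\Delta)$, and with $\varphi=\varphi_i$ on the relevant region. Away from $\ell_i^+$ all of $t_i^{-1}$, $F_\alpha$ and $\varphi_i$ are smooth, so there is nothing to check; the issue is along $\ell_i^+$, where one must verify that $\varphi_i$ \emph{removes} the kink of $\mu$ rather than merely relocating it. This is exactly what the construction of $(\check{h}_{\R^2})_i$ in the fourth stage achieves: $\check{h}_{\Omega_i}=\tilde h_{\Omega_i}\circ t_0^{-1}$ coincides with $h_i\circ t_0^{-1}$ on $t_0(\Omega_i\cap\mathcal{S}_{\delta_i})$, which by~\cite[Thm.~3.8]{vungoc} is smooth outside the \emph{negative} vertical axis, and $(\check{h}_{\R^2})_i$ was then extended --- via Claim~\ref{1:claim} applied upside down --- to be smooth on all of $t_0(\{y>\delta_i\})$; the piecewise-affine kink carried by the factor $t_0$ inside $\varphi_i$ cancels exactly the kink of $t_i^{-1}$ coming from $\mu$, so $\tilde F\circ y_\alpha$ is smooth. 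I expect the main obstacle to be precisely the bookkeeping here: keeping track that every smoothing modification of the fourth stage was carried out \emph{outside} the small strip $\mathcal{S}_{\delta_i}$, so that it never disturbs the identity $\varphi_i=R_\alpha\circ h_i\circ R_\alpha^{-1}$ forced there, and verifying that on the finitely many overlaps between the pieces above the several formulas for $\tilde F$ agree --- which they do automatically, all being $\varphi\circ\mu$. Smoothness on this cover together with global properness then gives the claim.
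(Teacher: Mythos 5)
Your proof follows essentially the same route as the paper: properness is immediate from $\mu$ being proper and $\varphi$ a homeomorphism, and smoothness is checked piece by piece by pulling $\tilde F$ back along the embeddings $y_\alpha$ and using the same three cancellations (the identity away from the strips, $(\check h_{\R^2})_i\circ t_0 = h_i$ on $\mathcal S_{\delta_i}$ together with $h_i\circ g_i=\mathrm{id}$ near a node, and $t_0 R_\alpha^{-1}=R_\alpha^{-1} t_i$ along a cut). The only real difference is organizational: you stratify by the type of local model $\Omega_\alpha$, whereas the paper fixes $i$, sets $\tilde F_i:=\varphi_i\circ\mu$, and stratifies by regions of the moment image relative to $c_i$ and $\ell_i$; in particular the paper's ``Case~2'' handles first-stage pieces that graze the strip $\mathcal S_{2\delta_i}$ without touching $\ell_i^+$ by arguing directly that $\tilde F_i$ inherits the smoothness of $\mu$ there, while you sidestep this by shrinking the $\delta_i$ so that such pieces avoid the strip altogether --- a legitimate simplification given that $\ell_i^+\cap\Delta$ is compact and the cover near it is finite. (Incidentally, your computation near the node, giving $\tilde F\circ y_\alpha=R_\alpha\circ F_i$, correctly retains the factor $R_\alpha$ which the paper's display drops, evidently a typo; the conclusion is of course unchanged.)
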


The properness claim is immediate since $\varphi$ is a homeomorphism
and $\mu$ is proper.

In order to show that $\tilde{F}$ is smooth, consider the map
$\tilde{F}_i \colon M \to \mathbb{R}^2$ defined as a composite
$\tilde{F}_i:= \varphi_i \circ \mu,$
where recall $\mu$ is the map (\ref{mu:map}).  By definition of
$\varphi$, we have that
$\tilde{F}|_{\mathcal{S}_{\delta_i}}=\tilde{F}_i$, and hence to prove
the claim it suffices to show that each $\tilde{F}_i$ is smooth.  To
prove this, we distinguish three cases.
\\
\\
{\emph Case 1}: \emph{in a neighborhood of $c_i$}. In the neighborhood
$\Omega_{\alpha}$ of $c_i$ sent by $R_\alpha^{-1}$ into
$[-\delta,\delta]^2$, we have that
\[
(\check{h}_{\RM^2})_i \, t_0 \, R_\alpha^{-1} =
\check{h}_{\Omega_i}\, t_0 \, R_\alpha^{-1} =
\tilde{h}_{\Omega_i}\, t_0^{-1}\, t_0 \,R_\alpha^{-1} =
h_i\, R_\alpha^{-1}.
\]
Recall that $y_\alpha^*\mu=F_\alpha=R_\alpha\circ \mu_i$.  Therefore
one can write, in the preimage by $\mu$ of this neighbourhood, $
y_\alpha^*(\tilde{F}_i) = y_\alpha^*(h_i\circ \mu_i) = F_i.  $ Since
$F_i$ is smooth, it follows that $\tilde{F}_i$ is smooth in
$\Omega_{\alpha}$.
\\
\\
{\emph Case 2}: \emph{away from the cut $\ell_i$}.  Let $
\Lambda_i:=\bigcup_{j \neq i} \mu^{-1}(\ell_j) \subset \R^2.  $ We
have that
$$
(\check{h}_{\RM^2})_i \,t_0 \,R_\alpha^{-1} = \check{h}_{\Omega}
\,t_0\, R_\alpha^{-1} = \tilde{h}_{\Omega} \, R_\alpha^{-1}
\,\,\, \textup{on the set}\,\,\, (R_\alpha\circ
t_0^{-1})(\{(x,\,y)\, | \, y<-\delta_i/2\}),
$$
which by construction is smooth on this set. Thus
$\tilde{F}_i$ has the same degree of smoothness as $\mu$ on the set
$\mu^{-1}((R_\alpha\circ t_0^{-1})(\{(x,\,y)\, | \,
y<-\delta/2\}))$.
Note that the set $\mu^{-1}\big((R_\alpha\circ
t_0^{-1})(\{(x,\,y) \, | \, y<-\delta_i/2\})\big)$ does not
contain $\mu^{-1}(\ell_i)$.  The same argument applies to the analogue
subsets of $M$ corresponding to the regions $\{(x,\,y) \, | \,
x<-\delta_i/2\}$ and $\{(x,\,y)\, | \, x>\delta_i/2\}$.  On the subset
of $M$ corresponding to the region $\{(x,\, y)\, | \, y>\delta_i/2\}$,
the map $(\check{h}_{\RM^2})_i$ is smooth by construction.  Hence the
map $\tilde{F}_i$ is smooth on $M\setminus \Lambda_i$.
\\
\\
{\emph Case 3}: \emph{along the cut $\ell_i$, away from $c_i$}.
Remark that $t_0R_\alpha^{-1}=R_\alpha^{-1}t_i$.  By
construction of $\mu$ above the open sets $\Omega_\beta$ covering the
cut $\ell_i$, we have that $ y_\beta^*\, \mu =t_i^{-1}\,
F_\beta.  $ Hence
$$
y_\beta^*((\check{h}_{\RM^2})_i \, t_0\, R_\alpha^{-1} \mu) =
y_\beta^*\, ((\check{h}_{\RM^2})_i \, F_\beta)\,\,\, \textup{on the
  set}\,\,\, \mu^{-1}(\Omega_\beta),
$$
and this expression defines a smooth map. Thus $\tilde{F}_i$ is
smooth.
\\
\\
Hence putting cases 1, 2, 3 together we have shown that $\tilde{F}_i$
is smooth on $\mu^{-1}(\Omega_\beta)$ for all $\Omega_\beta$ covering
the cut $\ell_i$, and elsewhere, $\tilde{F}_i$ is as smooth as $\mu$.
This concludes the proof of Claim \ref{5.2:claim}.

\vspace{1.7mm}

Write $\tilde{F}:=(J,\,H)$. We then have the following conclusive
claim.

\begin{claim} \label{5.3:claim} The symplectic manifold $(M,\ \omega)$
  equipped with $J$ and $H$ is a semitoric integrable system.
  Moreover, the list of invariants (i)\--(v) of the semitoric
  integrable system $(M,\, \omega,\, (J,\,H))$ is equal to the list of
  ingredients (i)\--(v) that we started with.  Finally, $M$ is a
  compact manifold if and only $\Delta$ is compact.
\end{claim}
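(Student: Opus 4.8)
\emph{Orientation.}
The claim has three parts: that $(M,\omega,(J,H))$ is a simple semitoric integrable system; that its invariants (i)--(v) of Definition~\ref{listofinvariants} coincide with the ingredients (i)--(v) of Definition~\ref{listofingredients}; and that $M$ is compact exactly when $\Delta$ is. By Theorem~\ref{theo:glueing}, $M$ is already known to be a Hausdorff, paracompact symplectic $4$\--manifold, and it is connected because the cover $(\Omega_\alpha)$ of the convex set $\Delta$ has connected nerve, so the pieces $M_\alpha$ glue into a connected space; by Claim~\ref{5.2:claim} the map $\tilde F=(J,H)=\varphi\circ\mu$ is proper and smooth. The first thing I would record is that each ingredient of $\varphi$ --- the piecewise affine $t_0$, the homeomorphisms $\check h$, and the affine maps $R_\alpha=T^{k_i}\circ(\text{translation})$ appearing near the nodes --- preserves the first coordinate, hence so does $\varphi$ and therefore $J=\pi_1\circ\tilde F=\pi_1\circ\mu$. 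On each local model (regular, transversally elliptic, elliptic--elliptic, focus--focus) the first component of $R_\alpha\circ(\text{model momentum map})$ is, up to an additive constant, an integer combination of the oscillator Hamiltonians, respectively the $2\pi$\--periodic component of the focus--focus model, so it generates a $2\pi$\--periodic Hamiltonian flow; since every transition map satisfies $\phy_{\alpha\beta}^*F_\beta=F_\alpha$ these flows patch to a single Hamiltonian $\T^1$\--action with momentum map $J$. This $J$ is proper because $\mu$ is, together with assumption (a1): $J^{-1}([a,b])=\mu^{-1}(\{(x,y)\in\Delta\mid a\le x\le b\})$, and the set in brackets is compact since the top and bottom boundary functions of $\Delta$ are continuous and piecewise linear on $[a,b]$. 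On the dense open set where $\mu=(\mu_1,\mu_2)$ is a genuine toric momentum map one has $\{\mu_1,\mu_2\}=0$ and $H=\varphi^{(2)}(\mu_1,\mu_2)$, whence $\{J,H\}=\partial_2\varphi^{(2)}\cdot\{\mu_1,\mu_2\}=0$ and $dJ\wedge dH\neq0$ there; by smoothness of $\tilde F$ and density these extend to all of $M$, so $(J,H)$ is an integrable system.

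\emph{Singularities and simplicity.}
Next I would classify the critical points of $F=(J,H)$ using the local descriptions of $\tilde F$ established inside the proof of Claim~\ref{5.2:claim}: outside the strips $\mathcal{S}_{2\delta_i}$ one simply has $\tilde F=\mu$, so near a Delzant or hidden Delzant corner $\tilde F$ agrees, after an $\textup{Aff}(2,\ZM)$ change, with the elliptic--elliptic model, near an interior edge point or a fake corner with the transversally elliptic model, and near each node $m_i$ with the focus--focus model $F_i$ (this is precisely Case~1 of Claim~\ref{5.2:claim}, $y_\alpha^*\tilde F_i=F_i$). Hence every singularity of $F$ is non\--degenerate in Williamson's sense, of elliptic--elliptic, transversally elliptic or focus--focus type, with no real\--hyperbolic block, so $(M,\omega,(J,H))$ is a semitoric integrable system. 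Simplicity holds because each piece $M_i$ carries the single critical point $m_i$ with $J(m_i)=\pi_1(c_i)=\lambda_i$, the $\lambda_i$ being pairwise distinct (the $\ell_{\lambda_j}$ are distinct vertical lines), and one checks from the Delzant--semitoric structure of $\Delta$ and the placement \eqref{equ:cuts} of the nodes that $J^{-1}(\lambda_i)$ contains no further critical point.

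\emph{Matching of invariants.}
For ingredient (i), the focus--focus points of the system are exactly the $m_i$, one per node, so $m_f$ is the number of nodes, that is, the complexity of $\Delta_{\scriptop{w}}$. For (ii), the Taylor series invariant at $c_i$ depends only on the singular foliation near $F^{-1}(c_i)$, and there $\tilde F$ has the same fibres as $F_i$, whose foliation was built in the second stage (via \cite{vungoc0}) to have symplectic invariant $(S_i)^\infty$. For (iii), since $\mu$ is a toric momentum map for the system off the upward cuts $\ell_j^+$ and has image exactly $\Delta$, it plays the role of the toric momentum map in the polygon construction of \cite[Th.~3.8]{vungoc} with cut\--signs $(+1,\dots,+1)$ --- one also checks the remaining normalizing properties imposed in \cite{pelayovungoc} --- so the semitoric polygon invariant is the class of $\bigl(\Delta,(\ell_{\lambda_j})_{j=1}^{m_f},(+1,\dots,+1)\bigr)=\Delta_{\scriptop{w}}$. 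For (iv), the volume invariant at $c_i$ is $\pi_2(\mu(m_i))-\min_{s\in\ell_i\cap\Delta}\pi_2(s)=\pi_2(c_i)-\min\pi_2(\Delta\cap\ell_{\lambda_i})$, which equals $h_i$ by \eqref{equ:cuts}. For (v), the second stage used for $M_i$ the \emph{privileged} momentum map of Section~\ref{indexsection} and glued it in through $R_\alpha=T^{k_i}\circ(\text{translation})$, so the toric momentum map $\mu$ and the privileged momentum map $\nu$ of the constructed system satisfy $\DD\mu=T^{k_i}\DD\nu$ near $c_i$; hence the twisting index at $c_i$ is $k_i$, and since the entire construction was carried out with the chosen representative $\bigl(\Delta_{\scriptop{w}},(k_j)_{j=1}^{m_f}\bigr)$, the twisting\--index invariant is the prescribed $(G_{m_f}\times\mathcal G)$\--orbit.

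\emph{Compactness, and the main difficulty.}
Since $\mu\colon M\to\Delta$ is proper and surjective, $M=\mu^{-1}(\Delta)$ is compact when $\Delta$ is, while if $\Delta$ is noncompact then $M$ cannot be, its continuous image being $\Delta$; this settles the last assertion. I expect the genuinely delicate points to be the identifications of ingredients (iii) and (v): these require revisiting the constructions of \cite{vungoc} and \cite{pelayovungoc} closely enough to be certain that the present gluing recipe inverts them on the nose --- in particular that the twisting index, defined by comparing $\mu$ with the privileged momentum map attached to the unique hyperbolic radial vector field near each focus--focus fibre, really comes out equal to the chosen integer $k_i$, which is where the bookkeeping of the matrices $R_\alpha$ through stages two to four is used. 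By contrast, the non\--degeneracy and Williamson type of the singularities, and the matching of (i), (ii) and (iv), are essentially immediate from the explicit local models of the construction.
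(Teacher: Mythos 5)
Your proof follows the same path as the paper's: identify the singularity types from the local models, invoke Claim~\ref{5.2:claim} for smoothness, verify $\{J,H\}=0$ and the $2\pi$\--periodicity of $J$ locally, match (i)--(v) by reading off the ingredients used in stages one through three, and get compactness from the properness of $\mu$ and the convexity of $\Delta$; you also fill in details the paper leaves implicit, such as why $\varphi$ preserves the first coordinate and the (a1)\--based properness of $J$. One caution: the assertion that ``$J^{-1}(\lambda_i)$ contains no further critical point'' is literally false, since that level set contains transversally elliptic points at $\ell_{\lambda_i}\cap\partial\Delta$ and, over a hidden Delzant corner, an elliptic--elliptic point; however, the paper's own proof also does not spell out the verification of the simplicity hypothesis, and the substantive content you need --- that the $J$\--values $\lambda_i$ of the focus--focus points are pairwise distinct and each node is the unique focus--focus point at its $J$\--level --- does hold by construction.
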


Let us prove this claim. We know from Claim~\ref{5.2:claim} that
$\tilde{F}$ is smooth. Since the first component $J$ is obtained from
glueing proper maps, it follows from Theorem~\ref{theo:glueing} that
$J$ is proper. What's more, the Hamiltonian flow of $J$ is everywhere
periodic of period $2\pi$ because it is true in any local piece
$M_\alpha$. Clearly $\{J,\,H\}=0$, since it is a local property.  It is
also easy to see that the only singularities of $\tilde{F}$ come from
the singularities of the models $F_\alpha$, for the glueing procedure
does not create any additional singularities. Now, near any elliptic
critical value, the homeomorphism $\mu$ is a local diffeomorphism, so
$\tilde{F}$ has the same singularity type as the elliptic model
$F_\alpha$. Finally, near a node we have checked in the proof of
Claim~\ref{5.2:claim} that $\tilde{F}$ is precisely equal to the model
$F_i$, and hence possesses a focus-focus singularity. Thus, provided
we show that $M$ is connected, $(J,\,H)$ is a semitoric system.

Let us now consider its invariants (the connectedness of $M$ will
follow).

\begin{itemize}
\item[(i)] As we mentioned, the singularities of $\tilde{F}$ are only
  elliptic, except for the nodes $c_1,\,\dots,\, c_{m_f}$ above each of
  which we have constructed a focus-focus singularity. Hence we have
  $m_f$ focus-focus singularities.
\item[(ii)] Each focus-focus singularity was constructed by glueing a
  semi-local model with prescribed Taylor series invariant
  $(S_i)^\infty$. Since this Taylor series is precisely a semi-local
  symplectic invariant, it is unchanged in the glued system
  $(M,\tilde{F})$.
\item[(iii)] Thus we have a completely integrable system on $M$ that
  defines an integral affine structure (with boundary) on the image of
  $\tilde{F}$, except at the nodes $c_i$. For any choice of vertical
  half cuts $(\ell_i,\epsilon_i)$, the generalized momentum polygon is
  the image of the affine developing map. But the momentum map $\mu$,
  outside the focus-focus fibres, is precisely such a developing map
  and its image, by the glueing procedure, is the polygon
  $\Delta$. Hence the semitoric polygon invariant of $\tilde{F}$ is
  the orbit of $\Delta_{\scriptop{w}}$. (See Lemma~\ref{lemm:mu}.)

  Notice that this shows that the image of $\mu$ is connected, which
  implies that the total space $M$, obtained by glueing above the
  image of $\mu$, is connected as well.
\item [(iv)] It follows directly from (iii) above and the definition
  of the nodes $c_j$ in~\eqref{equ:cuts} that the volume invariant
  defined in~\eqref{height:eq} is equal to $(h_1,\dots,h_{m_f})$.
\item [(v)] We calculate the twisting indices of our semitoric system
  with respect to the fixed polygon $\Delta$ or, which amounts to the
  same, with respect to the toric momentum map $\mu$. By definition, the
  $j^{\scriptop{th}}$ twist is the integer $\tilde{k}_j$ such that
\[
\DD\mu=T^{\tilde{k}_j}\DD\mu_j,
\]
where $\mu_j$ is the privileged momentum map of the focus-focus
fibration above $c_j$.  From the second stage of the construction, we
know that
\[
\mu = F_\alpha = R_\alpha \circ \mu_j = \tau\circ T^{k_j} \circ \mu_j,
\]
where $\tau$ is some translation. Hence $\DD\mu=T^{k_j}\DD\mu_j$, and
thus $\tilde{k}_j=k_j$.
\end{itemize}

Thus we see that we could prove the second part of the claim because
our construction is by symplectically glueing local pieces with the
appropriate ingredients as in Definition \ref{listofingredients}. This
is an advantage of constructing by glueing local pieces rather than,
for example, a global reduction on a larger space.

This concludes the proof of Claim \ref{5.3:claim}, and hence the proof
of the theorem.

\noindent
\\
Alvaro Pelayo \\
University of California\---Berkeley \\
Mathematics Department,
970 Evans Hall $\#$ 3840 \\
Berkeley, CA 94720-3840, USA.\\
{\em E\--mail}: {apelayo@math.berkeley.edu}

\medskip\noindent

\noindent
V\~u Ng\d oc San\\
Institut de Recherches Math\'ematiques de Rennes\\
Universit\'e de Rennes 1\\
Campus de Beaulieu\\
35042 Rennes cedex (France)\\
{\em E-mail:} {san.vu-ngoc@univ-rennes1.fr}

\end{document}